\documentclass[12pt]{amsart}
\usepackage{a4wide}
\usepackage[T1]{fontenc}
\usepackage[utf8x]{inputenc}
\usepackage[english]{babel}
\usepackage{amscd,amsmath,amsthm,amssymb,graphics}
\usepackage{lmodern,pst-node}%\usepackage{geometric}
\usepackage{multicol}
\usepackage{amsfonts,amssymb,amscd,amsmath,enumitem,verbatim}
\usepackage[pdfencoding=auto,bookmarks=true,bookmarksnumbered=true]{hyperref}

%    Absolute value notation

%    Blank box placeholder for figures (to avoid requiring any
%    particular graphics capabilities for printing this document).

\unitlength=0.7cm

%
%------    GENERAL MACROS    -----
%
% Standard rings and fields, affine and projective space
%
\def\NZQ{\Bbb}               % the font for N,Z,Q,R,C

\def\RR{{\NZQ R}}

%
%------------------------------------------------
% Symbols in "Fraktur"
%
\def\frk{\frak}               % font for "Fraktur"

\def\Phi{{\frk n}}
\def\Phi{{\frk N}}
%
%------------------------------------------------

\def\MH{{\mathcal H}}

\def\MM{{\mathcal M}}
\def\MG{{\mathcal G}}

% Small letters in bold
%

\def\OR{{\mathrm{OR}}}
\def\opn#1#2{\def#1{\operatorname{#2}}} % to make operators
%------------------------------------------------
% Numerical invariants of rings, ideals, and modules
%
\opn\chara{char} \opn\length{\ell} \opn\pd{pd} \opn\rk{rk}
\opn\projdim{proj\,dim} \opn\injdim{inj\,dim} \opn\rank{rank}
\opn\depth{depth} \opn\grade{grade} \opn\height{height}
\opn\embdim{emb\,dim} \opn\codim{codim}

\opn\Tr{Tr} \opn\bigrank{big\,rank}
\opn\superheight{superheight}\opn\lcm{lcm}
\opn\trdeg{tr\,deg}%\emph{
\opn\reg{reg} \opn\lreg{lreg} \opn\ini{in} \opn\lpd{lpd}
\opn\size{size}\opn\bigsize{bigsize}
\opn\cosize{cosize}\opn\bigcosize{bigcosize}
\opn\sdepth{sdepth}\opn\sreg{sreg}
\opn\link{link}\opn\fdepth{fdepth}
%------------------------------------------------
% Divisors
%
\opn\div{div} \opn\Div{Div} \opn\cl{cl} \opn\Cl{Cl}
%
%------------------------------------------------
% Subsets of the spectrum of a ring
%
\opn\Spec{Spec} \opn\Supp{Supp} \opn\supp{supp} \opn\Sing{Sing}
\opn\Ass{Ass} \opn\Min{Min}\opn\Mon{Mon} \opn\dstab{dstab} \opn\astab{astab}
\opn\Syz{Syz}
%
%------------------------------------------------
% Standard operations on ideals and modules
%
\opn\Ann{Ann} \opn\Rad{Rad} \opn\Soc{Soc}
%
%------------------------------------------------
% Linear algebra and homology, endo- and automorphisms
%
\opn\Im{Im} \opn\Ker{Ker} \opn\Coker{Coker} \opn\Am{Am}
\opn\Hom{Hom} \opn\Tor{Tor} \opn\Ext{Ext} \opn\End{End}
\opn\Aut{Aut} \opn\id{id}

\opn\nat{nat}
\opn\pff{pf}%   \pf exists already
\opn\Pf{Pf} \opn\GL{GL} \opn\SL{SL} \opn\mod{mod} \opn\ord{ord}
\opn\Gin{Gin} \opn\Hilb{Hilb}\opn\sort{sort}
\opn\initial{init}
\opn\ende{end}
\opn\height{height}
\opn\type{type}
%
%------------------------------------------------
% Convexity
%
\opn\aff{aff} \opn\con{conv} \opn\relint{relint} \opn\st{st}
\opn\lk{lk} \opn\cn{cn} \opn\core{core} \opn\vol{vol}
\opn\link{link} \opn\star{star}\opn\lex{lex}
%------------------------------------------------
% Graded rings and Rees algebras
\opn\gr{gr}

%
%------------------------------------------------
% Polynomials and power series
%

\def\pot#1#2{#1[\kern-0.28ex[#2]\kern-0.28ex]}

%
%------------------------------------------------
% Direct and inverse limits
%
\opn\dirlim{\underrightarrow{\lim}}
\opn\inivlim{\underleftarrow{\lim}}
%
%
% Names with a meaning
%

\let\sect=\cap

\let\iso=\cong

\let\Sect=\bigcap

%
%------------------------------------------------
%
\let\to=\rightarrow

\def\Implies{\ifmmode\Longrightarrow \else
        \unskip${}\Longrightarrow{}$\ignorespaces\fi}
\def\implies{\ifmmode\Rightarrow \else
        \unskip${}\Rightarrow{}$\ignorespaces\fi}
\def\iff{\ifmmode\Longleftrightarrow \else
        \unskip${}\Longleftrightarrow{}$\ignorespaces\fi}

\let\:=\colon
%
%
%

%% THEOREM LABELS/ENVIRONMENTS
%%%%%
% The code below enhances the behaviour of \ref and \label so that: \label
% remembers the environment that it belongs; \ref will output the name of
% the environment as well. For example, \ref{theorem:fermat} will return
% ``Theorem 3'' instead of just ``3''.

%%%%% I grabbed the following from fncylab.sty over at CTAN.
\makeatletter
\CheckCommand*\refstepcounter[1]{\stepcounter{#1}%
\protected@edef\@currentlabel
{\csname p@#1\endcsname\csname the#1\endcsname}%
}
\renewcommand*\refstepcounter[1]{\stepcounter{#1}%
\protected@edef\@currentlabel
{\csname p@#1\expandafter\endcsname\csname the#1\endcsname}%
}
\def\labelformat#1{\expandafter\def\csname p@#1\endcsname##1}
\DeclareRobustCommand\Ref[1]{\protected@edef\@tempa{\ref{#1}}%
\expandafter\MakeUppercase\@tempa
}
\makeatother
										
% My numberlike command. It tells tex to number a theorem environment like
% some other theorem environment (say lemma).
%     \numberlike{theorem_environment_1}{theorem_environment_2}
\makeatletter
\newcommand{\numberlike}[2]{%
\expandafter\def\csname c@#1\endcsname{%
\expandafter\csname c@#2\endcsname}%
}
\makeatother

%% THEOREM ENVIRONMENTS
																						
% To have the theorem environments numbered within sections.
\def\DefaultNumberTheoremWithin{section}

\theoremstyle{plain}
\newtheorem{Lemma}{Lemma}
\numberwithin{Lemma}{\DefaultNumberTheoremWithin}
\labelformat{Lemma}{Lemma~#1}

\numberwithin{Claim}{\DefaultNumberTheoremWithin}
\numberlike{Claim}{Lemma}
\labelformat{Claim}{Claim~#1}
\newtheorem{Theorem}{Theorem}
\numberwithin{Theorem}{\DefaultNumberTheoremWithin}
\numberlike{Theorem}{Lemma}
\labelformat{Theorem}{Theorem~#1}
\newtheorem{Corollary}{Corollary}
\numberwithin{Corollary}{\DefaultNumberTheoremWithin}
\numberlike{Corollary}{Lemma}
\labelformat{Corollary}{Corollary~#1}
\newtheorem{Proposition}{Proposition}
\numberwithin{Proposition}{\DefaultNumberTheoremWithin}
\numberlike{Proposition}{Lemma}
\labelformat{Proposition}{Proposition~#1}
\newtheorem{Conjecture}{Conjecture}
\numberwithin{Conjecture}{\DefaultNumberTheoremWithin}
\numberlike{Conjecture}{Lemma}
\labelformat{Conjecture}{Conjecture~#1}
									
\theoremstyle{definition}

\numberwithin{Definition}{\DefaultNumberTheoremWithin}
\numberlike{Definition}{Lemma}
\labelformat{Definition}{Definition~#1}
														
\theoremstyle{definition}

\numberwithin{Question}{\DefaultNumberTheoremWithin}
\numberlike{Question}{Lemma}
\labelformat{Question}{Question~#1}
																			
\theoremstyle{definition}

\numberwithin{Problem}{\DefaultNumberTheoremWithin}
\numberlike{Problem}{Lemma}
\labelformat{Problem}{Problem~#1}

\theoremstyle{remark}
\newtheorem{Remark}{Remark}
\numberwithin{Remark}{\DefaultNumberTheoremWithin}
\numberlike{Remark}{Lemma}
\labelformat{Remark}{Remark~#1}
\theoremstyle{remark}

\numberwithin{Example}{\DefaultNumberTheoremWithin}
\numberlike{Example}{Lemma}

%
% We like the var forms of some greek letters (as taught in German schools)
%
\let\epsilon\varepsilon
\let\kappa=\varkappa
%
%           We print on A4 paper
%
\textwidth=15cm \textheight=22cm \topmargin=0.5cm
\oddsidemargin=0.5cm \evensidemargin=0.5cm \pagestyle{plain}
%
%           The pf environment of AMSART needs a little help
%
\def\qed{\ifhmode\textqed\fi
      \ifmmode\ifinner\quad\qedsymbol\else\dispqed\fi\fi}
\def\textqed{\unskip\nobreak\penalty50
       \hskip2em\hbox{}\nobreak\hfil\qedsymbol
       \parfillskip=0pt \finalhyphendemerits=0}
\def\dispqed{\rlap{\qquad\qedsymbol}}

%
% ------    END OF GENERAL MACROS    -------
\opn\dis{dis}
\def\pnt{{\raise0.5mm\hbox{\large\bf.}}}

\opn\Lex{Lex}
%-- macro for local cohomology-----------------------------

%-- macro for a complicated condition for the extended
%-- Hochster's formula

\begin{document}
\title{On the ideal of orthogonal representations of a graph in $\mathbb{R}^2$}

\author {J\"urgen Herzog}

\address{Fachbereich Mathematik, Universit\"at Duisburg-Essen, Campus Essen, 45117
Essen, Germany} \email{juergen.herzog@uni-essen.de}

\author{Antonio Macchia}
\address{Fachbereich Mathematik und Informatik, Philipps-Universit\"at Marburg, Hans-Meerwein-Strasse 6, 35032 Marburg, Germany} \email{macchia.antonello@gmail.com}

\author{Sara Saeedi Madani}

\address{School of Mathematics,
Institute for Research in Fundamental Sciences (IPM), P.O. Box 19395-5746, Tehran, Iran} \email{sarasaeedim@gmail.com}

\author{Volkmar Welker}
\address{Fachbereich Mathematik und Informatik,
         Philipps-Universit\"at Marburg, Hans-Meerwein-Strasse 6, 35032 Marburg, Germany} \email{welker@mathematik.uni-marburg.de}

\begin{abstract}
  In this paper, we study orthogonal representations of simple graphs $G$ in $\RR^d$ from
  an algebraic perspective in case $d = 2$. Orthogonal representations of
  graphs, introduced by Lov\'asz, are maps from the vertex set to $\RR^d$
  where non-adjacent vertices are sent to orthogonal vectors. We exhibit algebraic
  properties of the ideal generated by the equations expressing this condition
  and deduce geometric properties of the variety of orthogonal embeddings for
  $d=2$ and $\RR$ replaced by an arbitrary field. In particular, we classify
  when the ideal is radical and provide a reduced primary decomposition if
  $\sqrt{-1} \not\in K$. This leads to a description of the
  variety of orthogonal embeddings as a union of varieties defined by prime ideals.
  In particular, this applies to the motivating case $K = \RR$.
\end{abstract}

\thanks{The second author was supported by Università degli Studi di Bari}
\thanks{The paper was written while the third author was visiting the Department of Mathematics of University Duisburg-Essen. She wants to express her thanks for its hospitality. The research of the third author was supported by a grant from IPM}
\thanks{The fourth author was partially supported by MSRI}
\subjclass[2010]{05E40, 13C15, 05C62, 05E99}
\keywords{Orthogonal representation of graphs, permanental edge ideal, primary decomposition, radical ideal}

\maketitle

\section{Introduction}

Orthogonal representations of graphs were introduced by Lov\'{a}sz in
1979 \cite{Lo}. In \cite{Lo} and subsequent work it has been shown that they
are intimately related to important combinatorial properties of graphs
(see \cite[Ch.9]{LovaszBook}).
More precisely, let $G$ be a finite simple graph on vertex set
$V(G) = [n]:= \{ 1, \ldots , n \}$ and edge set
$E(G) \subseteq {[n] \choose 2}$ and let $d\geq 1$ be an integer.
By $\overline{G}$ we denote the complementary graph of $G$ with edge set
$E({\overline{G}}) = {[n] \choose 2} \setminus E(G)$.
An orthogonal representation of $G$
in $\RR^d$ is a map $\varphi$ from $[n]$ to $\RR^d$ such that for any edge
$\{i,j\} \in E({\overline{G}})$ in the complementary graph, the vectors
$\varphi(i)$ and $\varphi(j)$ are orthogonal with respect to the standard scalar product in $\RR^d$.
Formulated differently, if we identify the image of the vertex $i$ with
the $i$-th row $(u_{i1},\ldots,u_{id})$ of an
$(n \times d)$-matrix $U = (u_{ij})_{(i,j) \in [n] \times [d]} \in \RR^{n \times d}$, then the set
of all orthogonal representations of the graph $G$ is the vanishing set
in $\RR^{n \times d}$ of the ideal $L_{\overline{G}} \subset
\RR[x_{ij}\:\; i=1,\ldots,n, \; j=1,\ldots,d]$, where $L_{\overline{G}}$ is
generated by the homogeneous polynomials
\begin{equation}
  \label{generators1}
  \sum_{k=1}^dx_{ik}x_{jk}
\end{equation}
for $\{i,j\} \in E(\overline{G})$. We write $\OR_d^{\RR}(G) \subseteq \RR^{n \times d}$ for the variety of
orthogonal representations of $G$.

The first study of $L_{\overline{G}}$ and the geometry
of $\OR_d^{\RR}(G)$ can be found in \cite{LSS}. For that reason we would
like to call the ideal $L_{\overline{G}}$ of orthogonal graph representations of $G$ the {\em Lov\'{a}sz-Saks-Schrijver ideal} of $G$.
Clearly, the variety $\OR_d^{\RR}(G)$ contains
many degenerate representations where, for example, one of the vertices is represented by
the zero vector. To avoid this kind of degeneracy,
Lov\'{a}sz, Saks and Schrijver in \cite{LSS} consider general-position
orthogonal representations, that is, orthogonal representations in which
any $d$ representing vectors are linearly independent.
In \cite[Thm. 1.1]{LSS} they prove the remarkable fact
that $G$ has such a representation in $\RR^d$ if and only if $G$ is
$(n-d)$-connected in which case $L_{\overline{G}}$ is a prime ideal.

While the results from \cite{LSS} express properties of $\OR_d^{\RR}(G)$ and
$L_{\overline{G}}$ in terms of graph-theoretic properties of $G$, it is sometimes
convenient to interchange the role of $G$ and $\overline{G}$. Therefore, in our
subsequent work we will refer to $L_G$ or $L_{\overline{G}}$ depending on
which point of view is more suitable for the given context.

In this paper we want to understand some algebraic properties of $L_{\overline{G}}$ and
the geometry of the variety $\OR_d^{K}(G)$ of orthogonal representations
of $G$ for general $G$ and over an arbitrary field $K$.
This appears to be a hard task and we confine ourselves
to the first interesting case, when $d = 2$ and $K$ an arbitrary field.
Note that, for $d=1$ the ideal $L_G$ is a monomial ideal known as the edge
ideal of $G$ and a well studied object (see for example \cite[Ch.~9]{HH}
or \cite{MV}).

For an easier reading in the case $d =2$ we rename the
two variables $x_{i1}$, $x_{i2}$ corresponding to the
coordinates of the $i$-th vertex as
$x_i$, $y_i$ and consider $L_G$ as an ideal in the ring
$K[x_1,\ldots, x_n, y_1, \ldots, y_n]$. For the rest of the paper we only deal with the case $d=2$, unless otherwise stated.

The most satisfying results are obtained
under the additional hypothesis that $\sqrt{-1} \not\in K$ or equivalently
when the bilinear form of the standard scalar product on $K^2$
is non-degenerate. In particular, all our main results apply in the
motivating situation $K = \RR$.

After providing preparatory results in Section~\ref{section1}
we formulate our first main results in Section~\ref{section0}.

\begin{Theorem}
  \label{radical-L_G}
  Let $G$ be a graph on $[n]$ and $\chara(K)\neq 2$.
  Then $L_{G}$ is a radical ideal.
  In particular, if $K$ is algebraically closed then $L_{\overline{G}}$ contains all
  polynomials vanishing on $\OR_2^{K}(G)$.
\end{Theorem}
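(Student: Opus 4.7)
The plan is to establish radicality of $L_G$ by exhibiting a squarefree initial ideal; the second assertion is then immediate via Hilbert's Nullstellensatz, since $\OR_2^K(G)$ is cut out set-theoretically by $L_{\overline G}$ and the first part (applied to $\overline G$) gives $\sqrt{L_{\overline G}} = L_{\overline G}$, which over algebraically closed $K$ coincides with the vanishing ideal of $\OR_2^K(G)$.

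My first step is to reduce to the case $\sqrt{-1}\in K$. Because $K \hookrightarrow \bar K$ is faithfully flat, one has $L_G\,\bar K[x,y] \cap K[x,y] = L_G$, and hence radicality of $L_G\,\bar K[x,y]$ in $\bar K[x,y]$ forces radicality of $L_G$ in $K[x,y]$. Over $\bar K$ the linear substitution $u_i := x_i + \sqrt{-1}\,y_i$, $v_i := x_i - \sqrt{-1}\,y_i$ is an automorphism of the polynomial ring, and a short computation gives
\[
  u_iv_j + u_jv_i \;=\; 2(x_ix_j + y_iy_j).
\]
Because $\chara(K)\neq 2$, this carries $L_G\,\bar K[x,y]$ isomorphically onto the $2$-permanental edge ideal $J_G := (u_iv_j + u_jv_i\; :\; \{i,j\}\in E(G))$, and it suffices to prove that $J_G$ is radical.

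For this I would fix the lexicographic order with $u_1 > \cdots > u_n > v_1 > \cdots > v_n$ and compute a reduced Gr\"obner basis of $J_G$. Each generator $u_iv_j + u_jv_i$ (with $i<j$) has the squarefree leading term $u_iv_j$. The first S-polynomials, taken between pairs of generators sharing a common vertex $i$, yield
\[
  S(u_iv_j + u_jv_i,\, u_iv_k + u_kv_i) \;=\; v_i(u_jv_k - u_kv_j),
\]
a binomial with squarefree leading term $u_jv_iv_k$ (for $j<k$). Continuing Buchberger's algorithm, one aims to show---presumably by induction on a combinatorial parameter encoding the walks in $G$ that produce new Gr\"obner elements---that every leading term of every reduced element remains squarefree. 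Once the initial ideal of $J_G$ is proved to be a squarefree monomial ideal, $J_G$ is radical, and hence so is $L_G$.

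The main obstacle is this last step: giving a clean combinatorial description of the Gr\"obner basis of $J_G$ and verifying that no S-polynomial reduction ever introduces a leading monomial with a repeated variable. I expect the preparatory material of Section~\ref{section1} to furnish exactly the structural input needed---for instance, a description of how $L_G$ behaves under edge or vertex deletion, or an explicit Gr\"obner basis for small building blocks of $G$---so that the combinatorial bookkeeping can be carried out without an exhaustive case analysis.
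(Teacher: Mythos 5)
Your reduction to $\sqrt{-1}\in K$ via faithfully flat base change and the subsequent linear change of variables turning $L_G$ into a permanental edge ideal match the paper's strategy exactly; the paper uses $x_i\mapsto x_i-y_i$, $y_i\mapsto\sqrt{-1}(x_i+y_i)$ to obtain $\Pi_G=(x_iy_j+x_jy_i : \{i,j\}\in E(\overline{G}))$, which is your $J_G$ after relabeling. So the plan is right, and the second assertion does indeed follow from the Nullstellensatz once radicality is in hand.

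The gap is precisely the step you flag as the ``main obstacle,'' and it is genuinely the technical core of the proof; your sketch substantially underestimates it. In contrast with the Gr\"obner basis of a binomial edge ideal, the reduced Gr\"obner basis of $\Pi_G$ (see \ref{Grobner}) does \emph{not} consist only of binomials indexed by admissible paths. Two admissible paths $\pi_{ij}$ (odd) and $\sigma_{ij}$ (even) with the same endpoints produce, via the S-pair of $u_{\pi_{ij}}(x_iy_j+x_jy_i)$ with $u_{\sigma_{ij}}(x_iy_j-x_jy_i)$, a genuine \emph{monomial} $2\,\lcm(u_{\pi_{ij}},u_{\sigma_{ij}})\,y_ix_j$ (type (iii) in \ref{Grobner}), and iterating further produces additional monomials supported on odd cycles together with attached paths (type (iv)). Your sample S-polynomial $v_i(u_jv_k-u_kv_j)$ does not foreshadow this phenomenon, which is exactly where the case of $\Pi_G$ diverges from the binomial edge ideal case. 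Verifying Buchberger's criterion for the full set then requires a preparatory combinatorial lemma about splitting a pair of admissible paths of opposite parity through a smallest vertex above $\ell$ (\ref{lemmapaths}), followed by a long case analysis in which the monomials of types (iii) and (iv) participate in S-pair reductions. Until this is carried out, the claim that every leading term is squarefree is unverified, so the proof of radicality remains incomplete.
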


\begin{Theorem}
  \label{char2}
  Let $G$ be a graph on $[n]$ and $\chara(K)=2$.
  Then $L_{G}$ is a radical ideal if and only if $G$ is bipartite.
\end{Theorem}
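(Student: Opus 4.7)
\emph{Plan.}
My strategy is to handle the two implications separately: for the ``if'' direction I will use a characteristic-$2$ linear change of variables that converts $L_G$ into a binomial edge ideal, while for the ``only if'' direction I will localize $L_G$ at the prime corresponding to the isotropic-line component of the variety and exhibit a strict containment by a Nakayama argument.

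For the bipartite direction, fix a bipartition $V(G) = A \sqcup B$ and perform the invertible change of variables
\[
x_i = p_i,\ y_i = p_i + q_i \ \text{for}\ i \in A,\qquad x_j = r_j + s_j,\ y_j = s_j \ \text{for}\ j \in B.
\]
For an edge $\{i,j\}$ with $i \in A$, $j \in B$, in characteristic $2$ this sends
\[
f_{ij} = x_i x_j + y_i y_j \ \longmapsto\ p_i(r_j+s_j) + (p_i+q_i)s_j = p_i r_j + q_i s_j,
\]
because $2p_is_j = 0$. The polynomial $p_i r_j + q_i s_j$ is the $2\times 2$ minor $\det\bigl(\begin{smallmatrix} p_i & s_j \\ q_i & r_j \end{smallmatrix}\bigr)$ of the generic $2\times n$ matrix whose columns are $(p_i, q_i)^T$ for $i \in A$ and $(s_j, r_j)^T$ for $j \in B$. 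After relabelling, the image of $L_G$ is the binomial edge ideal $J_G$ of $G$, and since binomial edge ideals are radical in arbitrary characteristic (Herzog--Hibi--Hreinsd\'{o}ttir--Kahle--Rauh), $L_G$ is radical.

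For the non-bipartite direction, let $C = C_{2k+1}$ be a shortest odd cycle of $G$, which is then an induced subgraph. Setting $x_i = y_i = 0$ for $i \notin V(C)$ identifies $L_C$ with the elimination ideal $L_G \cap K[x_i, y_i : i \in V(C)]$, and elimination preserves radicality; so it suffices to prove that $L_C$ is not radical. Consider $\pp = (x_i + y_i : i \in V(C))$. Since every vector on the isotropic line $x = y$ is self-orthogonal in characteristic $2$, one has $V(\pp) \subseteq V(L_C)$, hence $L_C \subseteq \pp$. I would first check that $\pp$ is a \emph{minimal} prime of $L_C$: propagating the orthogonality $v_{j+1} \in v_j^\perp$ around an odd cycle in $K^2$ forces any configuration in $V(L_C)$ with all $v_j \neq 0$ to lie entirely on the isotropic line, so $V(\pp)$ is an irreducible component of $V(L_C)$.

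Localize at $\pp$ and set $u_i = x_i + y_i$. A direct computation gives $f_{i,i+1} = x_i u_{i+1} + x_{i+1} u_i + u_i u_{i+1}$, whose image in $\mm/\mm^2$ (with $\mm = \pp R_{\pp}$) is the linear form $x_i u_{i+1} + x_{i+1} u_i$. Arranging these $2k+1$ initial forms as rows of a $(2k+1) \times (2k+1)$ matrix, only two permutations contribute non-trivially to the determinant---the identity and the cyclic shift $\sigma(i) = i+1$ modulo $2k+1$---and both contribute the monomial $x_1 x_2 \cdots x_{2k+1}$ with the same sign, since a $(2k+1)$-cycle has sign $+1$. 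The determinant therefore equals $2\, x_1 x_2 \cdots x_{2k+1}$, which vanishes in characteristic $2$. By Nakayama's lemma the initial forms fail to span $\mm/\mm^2$, so $L_C R_{\pp} \subsetneq \pp R_{\pp} = \sqrt{L_C R_{\pp}}$, proving $L_C$ is not radical. The chief technical hurdle is establishing that $\pp$ is a minimal prime; this is precisely where the oddness of the cycle enters, since for an even cycle $V(\pp)$ lies inside a larger ``generic'' component and the argument correctly degenerates.
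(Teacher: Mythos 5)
Your ``if'' direction is in substance the paper's: both identify $L_G$ with a binomial edge ideal after a characteristic-$2$ linear change of variables (the paper invokes \ref{binomial edge ideal} with $\sqrt{-1}=1$; your explicit substitution $x_i=p_i,\,y_i=p_i+q_i$, etc.\ does the same job) and then cite radicality of binomial edge ideals. Your ``only if'' direction is a genuinely different argument. The paper localizes at $Y$, rewrites $L_GT_Y$ in the coordinates $w_i=1+x_i/y_i$ as generated by $w_i+w_j+w_iw_j$, and telescopes explicit relations around an odd cycle to exhibit elements with $w_i^2\in L_GT_Y$ but $w_i\notin L_GT_Y$. You instead pass to an induced odd cycle $C$ and localize at the isotropic prime $\pp=(x_i+y_i:i\in V(C))$, showing by a Jacobian/Nakayama computation in $\mm/\mm^2$ that the initial forms of the $f_{i,i+1}$ fail to span because the cyclic determinant equals $2\,\bar x_1\cdots\bar x_{2k+1}=0$. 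This is conceptually crisp: the $2$ in the determinant is exactly the obstruction, the sign of the $(2k+1)$-cycle is where oddness enters, and the argument visibly degenerates for even cycles (where $\pp$ is not minimal). Two remarks. First, your passage to $C$: what you actually use is that the coordinate projection $\phi$ satisfies $\phi(L_G)=L_C$ (this needs $C$ induced, which your ``shortest odd cycle'' choice guarantees) combined with $L_C\subseteq L_G$, which forces $L_G\cap K[x_i,y_i:i\in V(C)]=L_C$; the substitution image is not in general the contraction, so the phrase ``identifies $L_C$ with the elimination ideal'' is true here only because of this retraction, and you should say so. Incidentally the paper picks an arbitrary odd cycle subgraph, not necessarily induced, and is less careful at the analogous step. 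Second, your minimality claim for $\pp$ is correctly argued geometrically over $\bar K$ by propagating orthogonality around the odd cycle; minimality descends to $K$ by faithful flatness. With those two points spelled out your proof is complete and offers a more structural explanation than the paper's element-chasing.
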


In order to prove \ref{radical-L_G} we first show that one may
assume that $\sqrt{-1}\in K$. Then we apply a linear transformation of
variables and obtain an ideal generated by the permanents of a
$(2 \times n)$-matrix whose column indices correspond to edges of $G$.
We call these ideals, which are of interest for their own sake,
permanental edge ideals. Permanental ideals have first been studied by Laubenbacher and Swanson in \cite{LS}.

In \ref{Grobner} we show that if $\chara(K)\neq 2$, then
permanental edge ideals have a squarefree initial ideal with respect
to a suitable monomial order.
>From this by a standard deformation argument it then follows that
permanental edge ideals and hence $L_G$ are radical ideals.

Based on experimental data we conjecture that for general $d$ the
following holds:

\begin{Conjecture}
  If $\chara(K) = 0$, then $L_G$ is a radical ideal for all $d$.
\end{Conjecture}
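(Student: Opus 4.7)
The plan is to mimic the proof of Theorem~\ref{radical-L_G} for $d=2$. First, since $\chara(K)=0$ the extension $K\subseteq K(\sqrt{-1})$ is separable and hence faithfully flat, so radicality descends from $K(\sqrt{-1})$ to $K$, and we may assume $\sqrt{-1}\in K$. Second, for even $d=2m$, the change of variables
\[
  u_{i,k}=x_{i,2k-1}+\sqrt{-1}\,x_{i,2k},\qquad v_{i,k}=x_{i,2k-1}-\sqrt{-1}\,x_{i,2k}\qquad (1\le k\le m),
\]
transforms each generator of $L_G$ into, up to a unit in $K$,
\[
  \sum_{k=1}^{m}\bigl(u_{i,k}v_{j,k}+u_{j,k}v_{i,k}\bigr),
\]
a sum of $m$ permanents of $2\times 2$ submatrices of the $2\times nm$ matrix whose columns are the $(u_{i,k},v_{i,k})^{T}$. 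For odd $d$ one treats the leftover coordinate separately. This reduces the conjecture to the radicality of a natural generalization of the permanental edge ideals of \ref{Grobner}.

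Third, one would try to find a monomial order under which this new ideal has a squarefree initial ideal, and then invoke the same deformation argument as in the proof of Theorem~\ref{radical-L_G} to conclude. For $m=1$ each generator is a single permanent and the paper's order works. For $m>1$ a natural attempt is a lexicographic order on the $u$- and $v$-variables chosen so that exactly one summand contributes the leading monomial of each generator, so that the initial terms of the generators themselves are at least squarefree.

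The main obstacle is precisely this last hope: for $m>1$, although each generator has a squarefree leading term, $S$-pair reductions can introduce elements whose leading terms are non-squarefree, and experimental computations suggest that no coordinate monomial order yields a squarefree initial ideal in general. A more plausible route is therefore to prove that $R/L_G$ is generically reduced by a Jacobian computation, exploiting the natural $O(d)$-action on $\OR_d^K(G)$ to move to chosen smooth points on each irreducible component, and to combine this with a Cohen--Macaulayness or Serre $(S_1)$ argument to upgrade generic reducedness to radicality. Since Theorem~\ref{char2} shows that radicality fails in characteristic $2$ already for $d=2$, any successful proof must genuinely use the hypothesis $\chara(K)=0$; pinning down the precise step where this hypothesis enters -- be it a Gröbner basis requiring rational denominators, invariant theory of the reductive group $O(d)$, or a dimension count that fails in small characteristic -- is the central conceptual question to be resolved before a uniform proof can be written.
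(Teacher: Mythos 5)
This statement is a \emph{Conjecture} in the paper; the authors do not prove it, and it is presented as an open problem supported by experimental data. There is therefore no proof in the paper to compare your attempt against. Your proposal is also not a proof: it is an honest survey of possible strategies, and you yourself identify the obstruction that stops each of them.

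Concretely, the two routes you sketch both have genuine gaps. The Gr\"obner route: your change of variables for even $d=2m$ does produce generators that are sums of $m$ permanents, but as you note, for $m>1$ you have no monomial order for which the initial ideal of the whole ideal (not just the leading terms of the generators) is squarefree, and you report that experiments suggest no such order exists; without a squarefree initial ideal the deformation argument from the proof of \ref{radical-L_G} simply does not apply. The geometric route via Serre's criterion requires two things you do not supply: a proof that $T/L_G$ is generically reduced (the $(R_0)$ condition, which would need an actual Jacobian rank computation at a smooth point of each component, and hence a description of the irreducible components for general $d$, which is not known), and a proof of $(S_1)$, i.e.\ that $L_G$ has no embedded primes -- itself an open structural question for $d>2$. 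Invoking the $O(d)$-action is a reasonable heuristic for moving to good points, but it does not substitute for either computation. Finally, you correctly observe that any proof must use $\chara(K)=0$ in an essential way (by \ref{char2} the conjecture fails in characteristic $2$), but you do not identify where that hypothesis would actually be consumed in either approach. In short, the proposal clarifies why the conjecture is hard, which matches the authors' decision to leave it open, but it does not constitute a proof.
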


For $\sqrt{-1} \not\in K$, Section~\ref{section2} provides in
\ref{primarynew} a primary decomposition of $L_G$.
In geometric terms \ref{primarynew} has the following
reformulation.

\begin{Theorem}
  \label{geometriclovasz}
  If $\sqrt{-1} \not\in K$, then
  the variety $\OR_2^K(G)$ of orthogonal representations of a
  graph $G$ in $\RR^2$ is the union of varieties
  $V_S$ for $S \subseteq [n]$, each of them being the zero set of a prime ideal $Q_S$. Let $\overline{G}_1,\dots,\overline{G}_{c(S)}$ be the connected components of $\overline{G} \setminus S$. Then the variety $V_S$ contains all representations
  $\varphi : [n] \rightarrow \RR^2$ such that
  \begin{itemize}
    \item[(a)] $\varphi(i) = 0$ for $i \in S$ and all $i \in \overline{G}_j$, for which $\overline{G}_j$ is non-bipartite,
	\item[(b)] for each bipartite connected component $\overline{G}_\ell$ one has
    \begin{itemize}
    \item[(1)] $\varphi(i) \perp \varphi(j)$ if $i$ and $j$ are vertices of
	  the same bipartite connected component $\overline{G}_\ell$ and lie in
          different
	  blocks of its vertex bipartition,
	\item[(2)] $\varphi(i)$ and $\varphi(j)$ are linearly dependent if $i$ and $j$
	  are vertices of the same bipartite connected component $\overline{G}_\ell$ and
	  lie in the same block of its vertex bipartition.
    \end{itemize}
  \end{itemize}
  In particular, for each orthogonal representation in $V_S$ and each bipartite connected
  component $\overline{G}_\ell$ there is a pair of orthogonal lines in $\RR^2$ such that
  each of them contains the images from one block of the bipartition of $V(\overline{G}_\ell)$.
\end{Theorem}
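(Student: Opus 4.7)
The plan is to deduce this theorem directly from \ref{primarynew}, which under the hypothesis $\sqrt{-1}\notin K$ provides a primary decomposition $L_G=\bigcap_S Q_S$ as an intersection of prime ideals indexed by suitable subsets $S\subseteq [n]$. Once this algebraic decomposition is in hand, taking zero sets turns the intersection into the union $\OR_2^K(G)=V(L_{\overline{G}})=\bigcup_S V_S$ with $V_S=V(Q_S)$. Note that $\sqrt{-1}\notin K$ forces $\chara(K)\neq 2$, so \ref{radical-L_G} applies and ensures $L_G$ is radical, so the decomposition captures the full variety and each $V_S$ is irreducible.

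The second step is to read off the geometric conditions (a) and (b) from the generators of $Q_S$. For $i\in S$ the prime $Q_S$ contains $x_i$ and $y_i$, which expresses $\varphi(i)=0$; the decomposition also contains $x_i,y_i$ for every vertex $i$ lying in a non-bipartite connected component $\overline{G}_j$ of $\overline{G}\setminus S$, giving the second half of (a). For a bipartite connected component $\overline{G}_\ell$ with bipartition $A_\ell\sqcup B_\ell$, the ideal $Q_S$ contains the orthogonality binomials $x_ix_j+y_iy_j$ for edges $\{i,j\}\in E(\overline{G}_\ell)$, which yields (b)(1), together with the $2\times 2$-minor binomials $x_iy_j-x_jy_i$ for vertices $i,j$ lying in the same block of the bipartition, which yields (b)(2). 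These generators reflect the fact that over a field with $\sqrt{-1}\notin K$ the standard bilinear form on $K^2$ is non-degenerate, so orthogonality to a single nonzero vector pins down a line in $K^2$.

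For the final clause about a pair of orthogonal lines, I would fix a bipartite connected component $\overline{G}_\ell$. By (b)(2) the images $\varphi(i)$ for $i\in A_\ell$ all lie on a common line $L_A\subseteq K^2$, and likewise $\varphi(j)$ for $j\in B_\ell$ lie on a line $L_B$. If both $L_A$ and $L_B$ contain a nonzero vector then (b)(1), applied to any edge of $\overline{G}_\ell$ joining $A_\ell$ to $B_\ell$ (which exists by connectedness and the fact that edges go across the bipartition), forces $L_A\perp L_B$; in the degenerate case where one of the blocks has only zero images, we may simply declare the second line to be any line orthogonal to the first.

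The main obstacle is the algebraic step, which is \ref{primarynew}: one must identify exactly which subsets $S$ index the minimal primes, and verify that the ideals built from generators of type (a) and (b) are in fact prime (using $\sqrt{-1}\notin K$ so that the permanental generators $x_ix_j+y_iy_j$ do not factor) and that their intersection equals $L_G$. Granting this, the present theorem is obtained by the routine translation of an intersection of primes into a union of irreducible varieties together with the block-of-bipartition observation in $K^2$ described above.
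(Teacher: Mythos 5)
Your overall strategy matches the paper's: invoke \ref{primarynew} for $\overline{G}$ to write $L_{\overline{G}}=\bigcap_S Q_S(\overline{G})$ as an intersection of primes (primeness supplied by \ref{difficult}), pass to vanishing sets to turn the intersection into a union, and then read the defining conditions off the generators of each $Q_S(\overline{G})$. The final paragraph on orthogonal lines is also fine.

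There is, however, one genuine error in the middle step, and it sits precisely where the hypothesis $\sqrt{-1}\notin K$ has to do real geometric work. You assert that $Q_S(\overline{G})$ \emph{contains} $x_i$ and $y_i$ for every vertex $i$ lying in a non-bipartite connected component $\overline{G}_j$ of $\overline{G}\setminus S$. That is false as a statement about the ideal. What $Q_S(\overline{G})$ contains for such $i$ is the generator $h_i=x_i^2+y_i^2$ of $I_{K_{|V(\overline{G}_j)|}}$; and since $I_{K_n}$ is prime of height $n$ (not the maximal ideal in its variables) when $\sqrt{-1}\notin K$, by \ref{primeness-K_{n}}, the variables $x_i,y_i$ cannot lie in $Q_S(\overline{G})$. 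The conclusion $\varphi(i)=0$ for such $i$ is not an ideal-theoretic fact but a pointwise one: for a $K$-rational point, $x_i^2+y_i^2=0$ together with $\sqrt{-1}\notin K$ forces $x_i=y_i=0$. The paper handles this carefully by observing that the vanishing set of $Q_S(\overline{G})$ is therefore contained in the vanishing set of $Q_{S\cup V(H)}(\overline{G})$, so one may absorb all non-bipartite components $H$ into $S$ and restrict the union to those $S$ for which $\overline{G}\setminus S$ has only singleton or bipartite components. Your phrasing skips this distinction and, as written, is wrong; the fix is exactly the argument just described. As a secondary point, in (b)(1) the orthogonality $f_{ij}=x_ix_j+y_iy_j$ should be imposed for \emph{all} cross-block pairs $i,j$ in a bipartite component, not merely for edges of $\overline{G}_\ell$, because $Q_S(\overline{G})$ uses $I_{\widetilde{\overline{G}}_\ell}$ with $\widetilde{\overline{G}}_\ell$ the complete bipartite graph on the bipartition.
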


Note, that the representation $\OR_2^K(G)$ as a union of $V_S$ is redundant
in general.

In Section \ref{section3} the primary decomposition of $L_G$ is turned into an
irredundant primary decomposition, see \ref{minimalprimes2}.
This theorem shows that the algebraic structure of the ideal $L_G$ is very
delicate and very sensitive toward the condition $\sqrt{-1} \not\in K$.
The formulation is slightly technical, but reveals the deeper correlation
of graph theoretical properties of $G$ and algebraic properties of $L_G$.

As a corollary one gets for $d=2$ the following generalization
of the result from \cite{LSS} to arbitrary fields $K$, where
$\sqrt{-1}$ is not in the prime field of $K$.

\begin{Corollary}
  \label{lovaszcor}
  Let $d=2$ and $\chara(K) = 0$ or $\chara(K) \not\equiv 1,2~\mod~4$. Then
  $L_{\overline{G}}$ is prime if and only if
  $G$ is $(n-2)$-connected.
\end{Corollary}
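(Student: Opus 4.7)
The plan is to derive \ref{lovaszcor} as a direct consequence of \ref{minimalprimes2}.

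First, the arithmetic condition on $\chara(K)$ ensures that $-1$ is not a square in the prime field $K_0$ of $K$: if $\chara(K)=0$ then $K_0 = \mathbb{Q}$, and if $\chara(K) = p$ with $p\not\equiv 1,2\pmod 4$ then $-1$ is a non-residue in $\mathbb{F}_p$. Since the generators of $L_{\overline{G}}$ have coefficients in $K_0$, I would reduce the primality question to a subfield $K'$ with $K_0\subseteq K'\subseteq K$ and $\sqrt{-1}\notin K'$, so that \ref{primarynew} and \ref{minimalprimes2} apply to $L_{\overline{G}}$ over $K'$. The latter yields an irredundant primary decomposition $L_{\overline{G}}=\bigcap_{S\in\mathcal{F}}Q_S$, where $\mathcal{F}$ is a family of subsets $S\subseteq[n]$ selected by combinatorial conditions on the components of $\overline{G}\setminus S$.

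Since every $Q_S$ is prime and $Q_\emptyset\in\mathcal{F}$, primality of $L_{\overline{G}}$ is equivalent to $\mathcal{F}=\{\emptyset\}$, and the question reduces to showing that this happens if and only if $G$ is $(n-2)$-connected. For the ``only if'' direction, suppose $G$ fails to be $(n-2)$-connected and pick a vertex cut $S$ of $G$ with $|S|\le n-3$; then $V(G)\setminus S$ splits as $A\sqcup B$ with $A,B\neq\emptyset$ and no $G$-edges across, so every pair $\{a,b\}\in A\times B$ is an edge of $\overline{G}\setminus S$. Combined with the condition $\varphi(i)=0$ for $i\in S$, this cuts out (via \ref{geometriclovasz}) a subvariety $V_S$ of $\OR_2^K(G)$ not contained in $V_\emptyset$, producing a minimal prime $Q_S\neq Q_\emptyset$ and hence witnessing that $L_{\overline{G}}$ is not prime. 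Conversely, if $G$ is $(n-2)$-connected, then $G\setminus S$ is connected for every $S$ with $|S|\le n-3$, and I would verify from the combinatorial criterion in \ref{minimalprimes2} that no such $S$ yields a minimal prime distinct from $Q_\emptyset$; the remaining cases $|S|\ge n-2$ collapse into $V_\emptyset$ via the linear-dependence clause of \ref{geometriclovasz}.

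The main obstacle will be the passage from $K'$ back to $K$, since primality is not in general preserved under field extension: one needs to check that each $Q_S$ remains prime after base change, and that no new components appear over $K$. This is precisely where the detailed combinatorial form of \ref{minimalprimes2} is indispensable, both to rule out extra components in the $(n-2)$-connected case and to produce a concrete witnessing $Q_S$ in the opposite direction.
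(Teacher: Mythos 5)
Your proposal diverges from the paper's route, which is much shorter: the paper first observes that $G$ is $(n-2)$-connected if and only if every vertex has at most one non-neighbor, i.e.\ if and only if $\overline{G}$ is a disjoint union of edges and isolated vertices, and then invokes \ref{prime} (whose statement is about exactly such graphs). All the heavy lifting, including the field issues, has already been done in the proof of \ref{prime}. Your plan instead tries to redo that work from \ref{minimalprimes2}, and in the process hits a gap that you yourself flag but do not close.

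The gap is the passage between $K$ and the subfield $K'$ with $\sqrt{-1}\notin K'$. Restricting coefficients to $K'$ only helps one direction: if $L_{\overline{G}}$ is prime over $K$, then $T_{K'}/L_{\overline{G},K'}$ embeds into $T_K/L_{\overline{G},K}$, so it is prime over $K'$ and \ref{minimalprimes2} forces $\mathcal{M}(\overline{G})=\{\emptyset\}$. But for the converse (``$(n-2)$-connected $\implies$ prime over $K$'') this reduction goes the wrong way: $\mathcal{M}(\overline{G})=\{\emptyset\}$ over $K'$ tells you $L_{\overline{G}}$ is prime over $K'$, and primality need not survive the base change $K'\hookrightarrow K$ (indeed $I_{K_n}$ itself loses primality when $\sqrt{-1}$ is adjoined). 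The paper sidesteps this by proving primality in this direction over the algebraic closure: when $\overline{G}$ is a disjoint union of edges and isolated vertices, $T/L_{\overline{G}}$ is a tensor product of copies of $K[x_i,x_j,y_i,y_j]/(x_ix_j+y_iy_j)$ and a polynomial ring, each factor is a domain over $\overline{K}$, and a tensor product of integral $\overline{K}$-algebras over an algebraically closed field is again integral; primality then descends to $K$. Your sketch contains nothing that would substitute for this geometric-integrality step, so the ``if'' direction remains unproved.

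Two smaller points. First, in the ``only if'' direction you take a vertex cut $S$ of $G$ of size at most $n-3$ and assert that it produces a minimal prime $Q_S \ne Q_\emptyset$; to invoke \ref{minimalprimes2} you must actually check $S\in\mathcal{M}(\overline{G})$, i.e.\ that each $i\in S$ is a cut point or bipartition point of $\overline{G}_{([n]\setminus S)\cup\{i\}}$, and this translation from ``vertex cut of $G$'' to membership in $\mathcal{M}(\overline{G})$ is not immediate. The paper avoids this by arguing inside $\overline{G}$ from the start, using the clean reformulation above. Second, the appeal to \ref{geometriclovasz} and to the ``linear-dependence clause'' is a geometric restatement of \ref{primarynew} rather than an independent argument, so it cannot supply the missing combinatorics.
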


Using the formulation of $(n-2)$-connectedness of $G$ in terms of $\overline{G}$
from \ref{prime}, it is easy to see that for $(n-2)$-connected graphs $G$ the ideal
$L_{\overline{G}}$, and therefore also $\OR_2^K(G)$, is a complete
intersection. Indeed experimental data suggests that this is true in
general:

\begin{Conjecture}
  Let $d \geq 1$, $K$ be an arbitrary field and $G$ be a graph on vertex set $[n]$.
  If $G$ is $(n-d)$-connected, then $L_G$ is prime and a complete intersection.
\end{Conjecture}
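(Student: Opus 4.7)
The plan is to exhibit a single smooth point on $V(L_G)$ of the predicted codimension and leverage this both for the complete intersection property and, combined with radicality and a connectedness argument, for primality. This is precisely what makes the case $d=2$ transparent in the paragraph just before the conjecture, and the strategy is to push the same idea through for arbitrary $d$. (I read the conjecture in the spirit of Corollary \ref{lovaszcor}, i.e.\ with $L_G$ playing the role of the ideal of orthogonal representations of the complementary graph; the argument is the same either way.)

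First I would count generators: $L_G$ is generated by $m$ homogeneous quadratics $f_{ij}=\sum_{k=1}^{d}x_{ik}x_{jk}$ in $nd$ variables (one for each edge in the appropriate convention), so $\height L_G\leq m$. Equality, hence the complete intersection property, reduces to finding a point of $V(L_G)$ at which the Jacobian of the $f_{ij}$ has full row rank $m$. By \cite{LSS}, the $(n-d)$-connectivity hypothesis supplies such a candidate, namely a general-position orthogonal representation $U=(u_1,\ldots,u_n)^{t}\in K^{n\times d}$ (after base change to $\overline K$ if needed, which preserves heights). At $U$, the Jacobian row indexed by the edge $\{i,j\}$ places the block $u_j$ in the $d$ coordinates of vertex $i$, the block $u_i$ in those of vertex $j$, and zeros elsewhere.

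Second, I would prove these $m$ rows are linearly independent. A putative dependence $\sum_{\{i,j\}}c_{ij}\,\mathrm{row}_{\{i,j\}}=0$ decomposes, block by block, into
\begin{equation*}
\sum_{j\,:\,\{v,j\}\in E}c_{vj}\,u_j=0 \quad\text{for every vertex }v.
\end{equation*}
General position says any $d$ of the $u_j$ are linearly independent, so each such vanishing identity forces $c_{vj}=0$ unless the relevant neighborhood of $v$ is confined to a distinguished subset of size $<d$. Propagating this using the $(n-d)$-connectivity hypothesis, which precludes vertex separators of size $<n-d$ that could localize the dependence, I would conclude $c_{ij}=0$ for every edge.

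Third, with complete intersection in hand, $L_G$ is Cohen--Macaulay and equidimensional, and the chosen $U$ is a smooth point of $V(L_G)$. For primality I would reduce to the reduced case via \ref{radical-L_G} and its conjectural extension to all $d$, then argue irreducibility of $V(L_G)$ by showing that the general-position locus is open, dense, and connected in $V(L_G)$: density follows from smoothness at these points, openness is immediate, and connectedness would be proved by deforming any two general-position representations into each other through a one-parameter family, mirroring the argument of \cite{LSS} over $\RR$ and descending from $\overline K$ to $K$ by faithful flatness. The main obstacle is the combinatorial linear-algebra step in the preceding paragraph: turning $(n-d)$-connectivity of $G$ into full Jacobian rank for arbitrary $d$. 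For $d=2$ the primary decomposition of Sections \ref{section2} and \ref{section3} short-circuits this step; for $d\geq 3$ no such decomposition is available and a fresh combinatorial argument appears necessary. A secondary difficulty is the global irreducibility of the general-position stratum over non-real fields, where the topological tools implicit in \cite{LSS} must be replaced by deformation-theoretic ones.
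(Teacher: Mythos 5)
This statement is a \emph{Conjecture} in the paper, not a theorem: the authors offer no proof, only the remark that experimental data supports it and that the complete-intersection part is easy for $d=2$ using the reformulation of $(n-2)$-connectedness in terms of $\overline{G}$ from Corollary~\ref{prime}. There is therefore no proof in the paper against which to compare your argument, and you should be aware that what you are being asked about is an open problem.

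Your submission is, as you yourself acknowledge, a strategy outline rather than a proof. The two places you flag as unresolved are indeed exactly where the real work would have to happen, and neither is a routine extension of what is in the paper or in \cite{LSS}: (1) the combinatorial linear-algebra step, namely that $(n-d)$-connectivity of $G$ forces full row rank of the Jacobian of the $m$ quadrics $f_{ij}$ at a general-position representation, is not established for any $d\geq 3$; and (2) irreducibility of the general-position stratum of $V(L_G)$ over an arbitrary field $K$ is asserted by analogy with the real, topological argument of \cite{LSS} but not proved. Beyond these admitted holes there are two further soft spots. The inference ``density follows from smoothness at these points'' is not valid: smoothness of a single point places it on a unique irreducible component and pins down the local dimension, but it does not show that general-position representations are dense in $V(L_G)$, nor that they meet every component — and for a possibly non-equidimensional $V(L_G)$ that is precisely what is at issue. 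Also, the reduction to the reduced case invokes the paper's first Conjecture (radicality of $L_G$ for all $d$ in characteristic $0$), which is itself open, so as written the primality argument is conditional on another unproved statement; moreover the conjecture under discussion allows arbitrary $K$, including positive characteristic where even the $d=2$ radicality result (Theorem~\ref{char2}) shows that subtle phenomena occur. The smooth-point/Jacobian approach is a reasonable line of attack and is consistent with how \cite{LSS} handle the real case, but the proposal as it stands does not prove the conjecture and does not claim to.
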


In addition to our main results we supply results on other algebraic
properties of the ideals $L_G$, e.g. unmixedness, height of primary
components, etc..

We end this introduction by explaining the connection of $L_G$ and binomial
edge ideals. Note that this connection is only valid for $d=2$,
$G$ bipartite and $\sqrt{-1} \in K$. In particular, it does not hold for
$K = \RR$.

\begin{Remark}
\label{binomial edge ideal}
  If $\sqrt{-1}\in K$ and $G$ is a  bipartite graph, then
  $L_G$ may be identified with the binomial edge ideal of $G$ (see \cite{HHHKR}). Indeed, suppose  $V(G)=V_1\cup V_2$ is the bipartition of $G$
  with $|V_1|=m$ and $|V_2|=n$. We apply the automorphism of
  $K[x_1,\ldots, x_n,y_1,\ldots, y_n]$ to $L_G$ defined by
  $x_i\mapsto x_i$ and $y_i\mapsto \sqrt{-1}y_i$ to obtain
  the binomial edge ideal $J_G$ attached to the matrix
  \[
    \begin{bmatrix}
      z_1 & \cdots & z_n \\ w_1 & \cdots & w_n
    \end{bmatrix},
  \]
  where $z_i=x_i$ for $i=1,\ldots,m$, $z_i=\sqrt{-1}y_i$ for
  $i=m+1,\ldots,n$, $w_i=\sqrt{-1}y_i$ for $i=1,\ldots,m$, and
  $w_i=x_i$ for $i=m+1,\ldots,n$. In particular, by \cite{HHHKR},
  for a bipartite graph $G$, the primary decomposition as well
  as the initial ideal of $L_G$ is known with respect to the
  lexicographic order induced by $z_1>\cdots>z_n>w_1>\cdots>w_n$.
  Moreover, when $G$ is a bipartite graph it is known that
  $L_G$ is a radical ideal.
\end{Remark}

\section{The ideals $I_{K_{n}}$ and $I_{K_{m,n-m}}$}
\label{section1}

In this section we define and analyze the ideals $I_{K_n}$ and $I_{K_{m,n-m}}$
that will prove to be building blocks for the reduced primary decomposition of
$L_G$. In addition, on the way, we prove a result that will be needed when
studying the radicality of $L_G$.

Set $T = K[x_1,\ldots, x_n,y_1,\ldots,y_n]$ and consider the following two
ideals in $T$. The notation, which may look unwieldy in the first place
will allow for a succinct formulation of the primary decomposition.

\begin{itemize}
  \item[($I_{K_n}$)]
    We set $I_{K_1}=(0)$, $I_{K_2}=(x_1x_2+y_1y_2)$
	and for $n>2$, we define $I_{K_n}$ as the ideal generated by
    the binomials
    \begin{eqnarray}
      \label{generators2}
      f_{ij}&=& x_ix_j+y_iy_j, \quad 1\leq i<j\leq n,
      \nonumber
      \\
      g_{ij}&=&x_iy_j-x_jy_i, \quad 1\leq i<j\leq n,\\
      h_i&=&x_i^2+y_i^2, \quad\quad\quad  1\leq i\leq n.
      \nonumber
    \end{eqnarray}

  \item[($I_{K_{m,n-m}}$)]
   For $1\leq m<n$ we define $I_{K_{m,n-m}}$ as the ideal generated by
   the binomials

   \begin{eqnarray}
     \label{generators3}
     f_{ij}&=& x_ix_j+y_iy_j, \quad 1\leq i\leq m, \quad m+1\leq j\leq n,
     \nonumber
     \\
     g_{ij}&=&x_iy_j-x_jy_i, \quad 1\leq i<j\leq m \quad \text{or}
     \quad m+1\leq i<j\leq n.
  \end{eqnarray}
\end{itemize}

Throughout this paper, when we refer to the standard generators of the
ideals $L_G$, $I_{K_n}$ and $I_{K_{m,n-m}}$ we mean the generators
introduced in \eqref{generators1}, \eqref{generators2} and
\eqref{generators3}, respectively.

\begin{Lemma}
  \label{quadratic}
  The standard generators of $I_{K_n}$, and the standard generators of $I_{K_{m,n-m}}$
  form  a Gr\"obner basis with respect to the lexicographic order
  induced by $x_1>\cdots > x_n> y_1>\cdots > y_n$.
\end{Lemma}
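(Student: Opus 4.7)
The plan is to apply Buchberger's criterion: verify that every non-trivial $S$-polynomial between standard generators reduces to zero modulo them, where by ``non-trivial'' I mean those whose leading monomials share a variable (by Buchberger's first criterion, S-polynomials of generators with coprime leading monomials reduce automatically). Under the given lex order one reads off the leading terms as $\ini(f_{ij}) = x_ix_j$, $\ini(g_{ij}) = x_iy_j$ (both for $i<j$) and $\ini(h_i) = x_i^2$. Since the $y_\ell$ are smaller than every $x_k$, the leading monomials all live in the $x$-subring, which substantially restricts when two leading terms overlap.

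Next I would enumerate the overlap patterns. For $I_{K_n}$ these are: $(f,f)$ sharing an index, $(f,g)$ where the $x$-variable of $g$ equals an index of $f$, $(f,h)$ where $h$ sits on an index of $f$, $(g,g)$ sharing either the first or second index, and $(g,h)$ where $h$ sits on the first index of $g$. In each case a one-step reduction should work. For instance, for $i<j<\ell$,
\[ S(f_{ij},f_{i\ell}) = x_\ell f_{ij} - x_j f_{i\ell} = y_i(x_\ell y_j - x_j y_\ell) = -y_i g_{j\ell}, \]
and analogously $S(f_{ij},f_{j\ell}) = -y_j g_{i\ell}$, $S(f_{ij},g_{ik}) = y_i f_{jk}$ (or $y_i h_j$ when $j=k$), $S(f_{ij},h_i)=y_i g_{ij}$, $S(g_{ij},g_{i\ell}) = -y_i g_{j\ell}$, $S(g_{ij},g_{kj}) = x_j g_{ik}$, and $S(g_{ij},h_i) = -y_if_{ij}$. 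Each is a multiple of a single generator, so Buchberger's criterion is satisfied.

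For $I_{K_{m,n-m}}$ the same calculations go through, but I would carefully check that the right-hand sides of the reductions still lie among the restricted generators. Indeed: the pair $(f_{ij},f_{i\ell})$ with $i\in[m]$ and $j,\ell>m$ reduces using $g_{j\ell}$, which is a same-block generator in the second block; the pair $(f_{ij},f_{kj})$ with $j>m$ reduces using $g_{ik}$ from the first block; and $(f_{ij},g_{i\ell})$ with $i,\ell\in[m]$ reduces via $f_{\ell j}$, which is a bipartite generator. The remaining overlaps involving two $g$'s from distinct blocks have disjoint leading monomials and therefore their $S$-polynomials are covered by Buchberger's first criterion. No $h_i$ appears as a generator of $I_{K_{m,n-m}}$, so none of the reductions involving $h_i$ in the $I_{K_n}$ case are needed here.

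The main obstacle is not conceptual but bookkeeping: making sure the enumeration of overlapping pairs is exhaustive and that the indices in each resulting generator on the right-hand side are ordered correctly (so that e.g.\ $x_ay_b - x_by_a$ equals $\pm g_{\min(a,b),\max(a,b)}$). Once this combinatorial case analysis is set up, every individual $S$-polynomial collapses to a single monomial multiple of another standard generator, giving a Gr\"obner basis in both cases simultaneously.
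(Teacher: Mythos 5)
Your proposal is correct and takes essentially the same approach as the paper: apply Buchberger's criterion, dismiss $S$-polynomials with coprime leading terms via the first criterion, and verify by direct computation that each overlapping $S$-polynomial reduces to a monomial multiple of a single standard generator. The paper carries out exactly this analysis (recording only two sample computations and leaving the rest to the reader), while you supply a fuller enumeration; the cases and the individual reductions you wrote out match those the paper intends.
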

\begin{proof}
  The assertion of the lemma follows once we have shown that
  for either of the ideals all $S$-polynomials of the standard generators
  reduce to zero, see for example \cite[Thm.~2.14]{EH} or
  \cite[Thm.~2.3.2]{HH}. If the
  leading terms of a pair of binomials do not have a common factor,
  then this $S$-polynomial reduces to zero, see for example
  \cite[Lem.~2.3.1]{HH}. Hence in the sequel we only have to consider the
  case that the leading terms have a common factor.
  In this case simple calculations show that such $S$-polynomials
  reduce to zero. We provide two examples and leave the remaining
  cases to the reader. First, for the standard generators $h_i$ and $f_{ij}$
  of $I_{K_n}$ we have $S(h_{i}, f_{ij})=-y_ig_{ij}$, and second for
  the standard generators $f_{ij}$ and $f_{ik}$ of $I_{K_{m,n-m}}$ we have
  $S(f_{ij}, f_{ik})=- y_ig_{jk}$ for $1\leq i\leq m$ and $m+1\leq j<k\leq n$.
\end{proof}

\begin{Corollary}
  \label{nonzerodivisor}
  Let $1 \leq m \leq n$.
  Then the variables  $x_1,\ldots,x_n, y_1,\ldots,y_{n}$ are non-zero
  divisors modulo $I_{K_n}$ and modulo $I_{K_{m,n-m}}$.
\end{Corollary}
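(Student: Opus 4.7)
The plan is to combine the Gr\"obner basis from \ref{quadratic} with the built-in symmetries of the two ideals. The central tool is the following standard consequence of the normal form algorithm: if $G$ is a Gr\"obner basis of an ideal $I \subseteq T$ with respect to a monomial order $<$ and $v$ is a variable not dividing any leading monomial of $G$, then $v$ is a non-zerodivisor modulo $I$. Indeed, $v \cdot m$ is again a standard monomial whenever $m$ is (otherwise some $\operatorname{LM}_<(g)$ would divide $v m$, and since $v \nmid \operatorname{LM}_<(g)$ it would already divide $m$); consequently $\operatorname{NF}(vf) = v \cdot \operatorname{NF}(f)$ for every $f \in T$, so $vf \in I$ forces $\operatorname{NF}(f) = 0$ in the polynomial ring $T$ and hence $f \in I$.

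I would then apply this criterion to the Gr\"obner bases furnished by \ref{quadratic}. The leading monomials of the standard generators of $I_{K_n}$ are $x_ix_j$ and $x_iy_j$ for $1 \leq i < j \leq n$, together with $x_i^2$ for $1 \leq i \leq n$; those of $I_{K_{m,n-m}}$ are $x_ix_j$ with $i \in [m]$, $j \in [m+1,n]$, together with $x_iy_j$ with $i<j$ in the same block. A direct inspection shows that $y_1$ divides no leading monomial of $I_{K_n}$, and that neither $y_1$ nor $y_{m+1}$ divides any leading monomial of $I_{K_{m,n-m}}$; hence these variables are non-zerodivisors by the criterion above.

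The remaining variables are handled by symmetry. The $K$-algebra automorphism of $T$ that swaps $x_i \leftrightarrow y_i$ for every $i$ fixes each $f_{ij}$ and each $h_i$, and sends each $g_{ij}$ to $-g_{ij}$, so it preserves both ideals. In addition, $I_{K_n}$ is stable under the $S_n$-action permuting indices, and $I_{K_{m,n-m}}$ is stable under the block-preserving subgroup $S_m \times S_{n-m}$. Combined, these groups move $y_1$ onto every variable in the $I_{K_n}$-case, and the pair $\{y_1,y_{m+1}\}$ onto every variable in the $I_{K_{m,n-m}}$-case. Since any $K$-algebra automorphism of $T$ stabilizing $I$ carries non-zerodivisors modulo $I$ to non-zerodivisors modulo $I$, the corollary follows. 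The only step with real content is the Gr\"obner basis criterion; the rest is inspection of leading monomials combined with the natural symmetries of the standard generators.
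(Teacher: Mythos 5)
Your proof is correct and follows essentially the same route as the paper: identify the witness variable(s) not appearing in the leading monomials of the Gr\"obner basis from \ref{quadratic} ($y_1$ for $I_{K_n}$; $y_1$ and $y_{m+1}$ for $I_{K_{m,n-m}}$), apply the non-zerodivisor criterion, and propagate by symmetry. The only minor difference is that the paper reaches the $x_i$'s by recomputing the Gr\"obner basis with the $y$'s ordered before the $x$'s, whereas you use the involution $x_i \leftrightarrow y_i$ (which visibly preserves both ideals), a slightly cleaner route to the same end.
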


\begin{proof}
  It follows from \ref{quadratic} that $y_1$ does not divide any of
  the monomial generators of $\ini_<(I_{K_n})$,
  where $<$ is the lexicographic order
  induced by $x_1>\cdots > x_{n}> y_1>\cdots > y_{n}$.
  This implies that
  $y_1$ is a non-zero divisor modulo $\ini_<(I_{K_n})$. Consequently,
  by \cite[Lem.~6.36]{EH}
  $y_1$ is a non-zero divisor modulo $I_{K_n}$.
  Using symmetry it follows that all $y_i$ are
  non-zero divisors
  modulo $I_{K_n}$. Furthermore, if we consider the initial ideal of
  $I_{K_n}$ with respect to the lexicographic order induced by
  $y_1>\cdots >y_n>x_1>\cdots>x_n$ then
  as before it follows that $x_1$ is a non-zero divisor modulo
  $\ini_<(I_{K_n})$,  and hence modulo $I_{K_n}$. Again by symmetry it follows
  that all $x_i$ are non-zero divisors modulo $I_{K_n}$.

  We apply \ref{quadratic} and deduce that $y_1$ and $y_{m+1}$ do not
  divide any of the monomial generators of $\ini_<(I_{K_{m,n-m}})$.
  This implies that $y_1$ and $y_{m+1}$ are non-zero divisors modulo
  $\ini_<(I_{K_{m,n-m}})$. Consequently,
  by \cite[Lem.~6.36]{EH} $y_1$ and $y_{m+1}$ are non-zero
  divisors modulo $I_{K_{m,n-m}}$.
  Again employing symmetry
  it follows that all $y_i$ are non-zero divisors modulo $I_{K_{m,n-m}}$.
  The same arguments as used for the $I_{K_n}$ now show that $x_1$ and
  $x_{m+1}$, and hence all $x_i$ are non-zero divisors modulo $I_{K_{m,n-m}}$.
\end{proof}

\begin{Proposition}
    \label{more}
    The ideals  $\ini_<(I_{K_n})$ and $\ini_<(I_{K_{m,n-m}})$ have a linear resolution, $\height(\ini_<( I_{K_n}))=n$,   $\height(\ini_<(I_{K_{m,n-m}}))=n-1$, $\depth (S/\ini_<(I_{K_n}))=1$ and  $S/\ini_<(I_{K_{m,n-m}})$ is Cohen--Macaulay. The same statements hold for the ideals $I_{K_n}$ and $I_{K_{m,n-m}}$.
\end{Proposition}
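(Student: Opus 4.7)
My plan is to prove all claims first for the initial monomial ideals, which by \ref{quadratic} have the explicit description
\[
\ini_<(I_{K_n}) = (x_ix_j \mid 1\leq i\leq j\leq n) + (x_iy_j \mid 1\leq i<j\leq n),
\]
and, correspondingly, $\ini_<(I_{K_{m,n-m}})$ equals the squarefree ideal generated by $x_ix_j$ for $i\leq m<j\leq n$ together with $x_iy_j$ for $i<j$ in the same block of the bipartition. The proposition will then transfer to the original ideals $I_{K_n}$ and $I_{K_{m,n-m}}$ by standard Gr\"obner deformation arguments.

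For the heights: every generator of $\ini_<(I_{K_n})$ contains some $x_i$ and the ideal contains $x_i^2$, so $\sqrt{\ini_<(I_{K_n})}=(x_1,\ldots,x_n)$ and $\height=n$. For $\ini_<(I_{K_{m,n-m}})$, the set $\{x_i\mid i\in[n]\setminus\{m\}\}$ is a vertex cover of the hypergraph of generators, giving $\height\leq n-1$; a short casework then shows no cover of smaller cardinality exists.

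To establish that $S/\ini_<(I_{K_{m,n-m}})$ is Cohen--Macaulay I would analyze the Stanley--Reisner complex $\Delta$. The cross constraints force any face to contain $x$-variables from at most one block, so $\Delta=\Delta^A\cup\Delta^B$, where each piece is a join of a full simplex on one block's $y$-variables with a staircase complex whose facets $\{y_{m+1},\ldots,y_t,x_t,\ldots,x_n\}$ admit an obvious linear shelling; since the intersection $\Delta^A\cap\Delta^B$ is the full simplex on $\{y_1,\ldots,y_n\}$, the two shellings glue to one of $\Delta$. For the equality $\depth(S/\ini_<(I_{K_n}))=1$, \ref{nonzerodivisor} shows $y_1$ is a non-zerodivisor, while modulo $(\ini_<(I_{K_n}),y_1)$ the element $x_1$ is annihilated by every variable in $\mm$ (since $x_1^2$, $x_1x_j$ and $x_1y_j$ with $j>1$ all lie in $\ini_<(I_{K_n})$), so $x_1$ is a socle element and $\depth=1$ exactly. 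For linear resolutions I would exhibit linear quotients orderings, arranging the generators first by the smallest $x$-index in their support and then lexicographically; pair-by-pair colon computations show that each successive colon ideal is generated by variables, which for ideals generated in degree two yields $2$-linear resolutions.

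Finally, I transfer the statements to $I_{K_n}$ and $I_{K_{m,n-m}}$ via the standard inequalities $\height J=\height\ini_<(J)$, $\beta_{ij}(J)\leq\beta_{ij}(\ini_<(J))$ and $\depth(S/J)\geq\depth(S/\ini_<(J))$. The first two preserve the heights and linear resolution directly; Cohen--Macaulayness of $S/I_{K_{m,n-m}}$ follows since $\depth\geq n+1=\dim$ forces equality. For the equality $\depth(S/I_{K_n})=1$, the lower bound is immediate, and for the matching upper bound I reproduce the socle argument at the binomial level: using $f_{1j}=x_1x_j+y_1y_j$, $g_{1j}=x_1y_j-x_jy_1$ and $h_1=x_1^2+y_1^2$, one checks $\mm\cdot x_1\subseteq(I_{K_n},y_1)$ while $x_1\notin(I_{K_n},y_1)$, giving $\depth(S/(I_{K_n},y_1))=0$ and hence $\depth(S/I_{K_n})=1$. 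The main obstacle I foresee is verifying the linear-quotients property for the non-squarefree ideal $\ini_<(I_{K_n})$: the presence of the squares $x_i^2$ puts it outside the edge-ideal / Fr\"oberg framework, and the successive colon computations across the various generator classes must be carried out with care.
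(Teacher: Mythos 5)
Your heights computation, the transfer statements via Gr\"obner deformation, and the socle argument for $\depth(S/\ini_<(I_{K_n}))=1$ --- and its reproduction at the binomial level for $\depth(S/I_{K_n})=1$ --- are all correct. The last one is a self-contained alternative to the paper's route, which instead reads off $\projdim(\ini_<(I_{K_n}))=2n-2$ from the largest linear-quotient colon and then transfers via \cite[Cor.~22.13]{P}. Your linear-quotients plan for $\ini_<(I_{K_n})$ should also work once carried out; the paper takes the ordering in which the generators of $(x_1,\ldots,x_n)^2$ precede the $x_iy_j$ ($i<j$), each block ordered lexicographically, and finds $J_{ij}:x_iy_j=(x_1,\ldots,x_n,y_{i+1},\ldots,y_{j-1})$.

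The genuine gap is the Cohen--Macaulayness argument for $S/\ini_<(I_{K_{m,n-m}})$. Your decomposition $\Delta=\Delta^A\cup\Delta^B$ is correct, each piece is shellable, and $\Delta^A\cap\Delta^B$ is indeed the full simplex on $\{y_1,\ldots,y_n\}$, but ``the two shellings glue to one of $\Delta$'' does not follow from these facts alone: gluing requires the first facet of $\Delta^B$ in the shelling to meet $\Delta^A$ in a codimension-one subcomplex of its boundary. If the ``obvious linear shelling'' of the staircase is taken in increasing $t$, its first facet $\{y_1,\ldots,y_{m+1},x_{m+1},\ldots,x_n\}$ meets $\Delta^A$ only in the simplex on $\{y_1,\ldots,y_{m+1}\}$, which has codimension $n-m$ in that facet, so the shelling fails to extend whenever $m<n-1$. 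This can be repaired by shelling $\Delta^B$ in decreasing $t$, starting at $\{y_1,\ldots,y_n,x_n\}$, and checking that every new facet meets the accumulated complex in a codimension-one simplex; or, more robustly, by dropping shelling and applying the Mayer--Vietoris long exact sequence of local cohomology to $0\to K[\Delta]\to K[\Delta^A]\oplus K[\Delta^B]\to K[\Delta^A\cap\Delta^B]\to 0$, which yields CM-ness since the pieces are CM of dimension $n+1$ and the intersection is a polynomial ring of dimension $n$. The paper sidesteps all of this by identifying $\ini_<(I_{K_{m,n-m}})$ as the edge ideal of a Ferrers bipartite graph with degree sequence $\lambda=(n-1,n-2,\ldots,1)$ and citing \cite[Thm.~2.1, Cor.~2.7]{CN}, which delivers the linear resolution and Cohen--Macaulayness simultaneously.
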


\begin{proof}
We first show that $\ini_<(I_{K_n})$ has a linear resolution. By Lemma~\ref{quadratic}, $J=\ini_<(I_{K_n})=J_1+J_2$, where $J_1=(x_1,\ldots,x_n)^2$ and
$J_2=(x_iy_j:1\leq i<j\leq n)$. We order the generators of this initial ideal in a way that the monomial generators of $J_1$ are bigger than the monomial
generators of $J_2$ and such that the generators of $J_1$ as well as the generators of $J_2$ are ordered lexicographically induced by $x_1>\cdots>x_n>y_1>\cdots>y_n$.
The ideal $J$ has linear quotients with respect to this ordering of its monomial generators. Indeed, the ideal $J_1$ is known to have linear quotients.
Now let $x_iy_j\in J_2$. We denote by $J_{ij}$ the ideal generated by all monomial generators of $J$ which are bigger than $x_iy_j$. Then
\[
J_{ij}:x_iy_j=(x_1,\ldots,x_n,y_{i+1},\ldots,y_{j-1}).
\]
This shows that $J$ has linear quotients and hence has a linear resolution by \cite[Proposition~8.2.1]{HH}. Moreover it follows from \cite[Corollary~8.2.2]{HH} that $\projdim (J)=2n-2$, because
$J_{1n}:x_1y_n=(x_1,\ldots,x_n,y_2,\ldots,y_{n-1})$ and any other colon ideal has less generators. This implies that $\depth (S/J)=1$.
Furthermore, one can see that the ideal $(x_1,\ldots,x_n)$ whose height is $n$ is a minimal prime ideal of $\ini_<(I_{K_{n}})$ and all other minimal
primes of $\ini_<(I_{K_{n}})$ have height greater than $n$. Thus $\height (\ini_<(I_{K_{n}}))=n$.
Now we show that $\ini_<(I_{K_{m,n-m}})$ has a linear resolution. By Lemma~\ref{quadratic},
\begin{eqnarray}
\ini_<(I_{K_{m,n-m}})&=&(x_ix_j:1\leq i\leq m, m+1\leq j\leq n)
\label{initial} \\
&+&(x_iy_j:1\leq i<j\leq m~\text{or}~m+1\leq i<j\leq n),
\nonumber
\end{eqnarray}
which is the edge ideal of a connected bipartite graph $H$ on the vertex set $V=V_1\cup V_2$ with $V_1=\{x_1,\ldots,x_m,y_{m+2},\ldots,y_n\}$ and $V_2=\{x_{m+1},\ldots,x_n,y_2,\ldots,y_m\}$. We label the vertices of $H$ such that $V_1=\{v_1,\ldots,v_{n-1}\}$ and $V_2=\{w_1,\ldots,w_{n-1}\}$, where $v_i=x_i$ for $1\leq i\leq m$, $v_i=y_{n+m+1-i}$ for $m+1\leq i\leq n-1$, $w_i=x_{m+i}$ for $1\leq i\leq n-m$, and $w_i=y_{n-i+1}$ for $n-m+1\leq i\leq n-1$. Figure~\ref{Ferrers} shows a simple example of such a graph for
$m=2$ and $n=5$. We show that $H$ is a Ferrers bipartite graph (by this labeling).
Recall that a Ferrers graph is a bipartite graph $H'$ on $V=A\cup B$ with $A=\{a_1,\ldots,a_p\}$ and $B=\{b_1,\ldots,b_q\}$ such that $\{a_1,b_q\}\in E(H')$, $\{a_p,b_1\}\in E(H')$, and if $\{a_i,b_j\}\in E(H')$, then $\{a_t,b_l\}\in E(H')$ for all $1\leq t\leq i$ and $1\leq l\leq j$. Associated to a Ferrers graph $H'$ is a sequence $\lambda=(\lambda_1,\ldots,\lambda_p)$ of non-negative integers, where $\lambda_i=\deg_{H'} a_i$ which is the degree of the vertex $a_i$ in $H'$ for all $i=1,\ldots,p$.
The edges of $H$ are given by (\ref{initial}), and it can be seen that $H$ with the labeling of the vertices as given above, is a Ferrers graph. Therefore, by \cite[Theorem~2.1]{CN} $\ini_<(I_{K_{m,n-m}})$ has a linear resolution.

\begin{figure}[hbt]
\begin{center}
\psset{unit=0.8cm}
\begin{pspicture}(-1,-2)(4,2)
\rput(-0.8,-1){
\rput(0,0){$\bullet$}
\rput(1.5,0){$\bullet$}
\rput(3,0){$\bullet$}
\rput(4.5,0){$\bullet$}

\rput(0,2){$\bullet$}
\rput(1.5,2){$\bullet$}
\rput(3,2){$\bullet$}
\rput(4.5,2){$\bullet$}
\psline(0,0)(0,2)
\psline(0,0)(1.5,2)
\psline(0,0)(3,2)
\psline(0,0)(4.5,2)
\psline(1.5,0)(0,2)
\psline(1.5,0)(1.5,2)
\psline(1.5,0)(3,2)
\psline(3,0)(0,2)
\psline(3,0)(1.5,2)
\psline(4.5,0)(0,2)

\rput(0,-0.5){$x_{3}$}
\rput(1.5,-0.5){$x_{4}$}
\rput(3,-0.5){$x_{5}$}
\rput(4.5,-0.5){$y_{2}$}

\rput(0,2.5){$x_{1}$}
\rput(1.5,2.5){$x_{2}$}
\rput(3,2.5){$y_{5}$}
\rput(4.5,2.5){$y_{4}$}
}
\end{pspicture}
\end{center}
\caption{}\label{Ferrers}
\end{figure}
Now we show that $S/\ini_<(I_{K_{m,n-m}})$ is Cohen--Macaulay. For this purpose, by \cite[Corollary~2.7]{CN}, we need  to compute the sequence $\lambda$ associated to $H$. By (\ref{initial}), $\deg_{H} v_i=\deg_{H} x_i= n-i$ for all $i=1,\ldots,m$. Moreover, since by (\ref{initial}), $\deg_{H} y_j=j-1-m$, for all $j=m+2,\ldots,n$, it follows that $\deg_{H} v_i=\deg_{H} y_{n+m+1-i}=n-i$ for all $i=m+1,\ldots,n-1$. Therefore, $\lambda=(n-1,n-2,\ldots,2,1)$ is the associated sequence to the Ferrers graph $H$, and hence by \cite[Corollary~2.7]{CN}, it follows that $S/\ini_<(I_{K_{m,n-m}})$ is Cohen--Macaulay. In particular,   $\ini_<(I_{K_{m,n-m}})$ is an unmixed ideal of height $|V_1|=|V_2|=n-1$, since $V_1$ and $V_2$ are minimal vertex covers of $H$.
By \cite[Corollary~22.13]{P}, $I_{K_{n}}$ and $\ini_<(I_{K_{n}})$ have the same minimal graded free resolution. It follows that $I_{K_{n}}$ has a linear
resolution and $\projdim (I_{K_{n}})=\projdim (\ini_<(I_{K_{n}}))$, and hence $\depth (S/I_{K_{n}})=\depth (S/\ini_<(I_{K_{n}}))=1$. Similarly by
\cite[Corollary~22.13]{P}, $I_{K_{m,n-m}}$ has a linear resolution. By \cite[Theorem~3.3.4,~(a)]{HH} we have $\height (I_{K_{n}})=\height (\ini_<(I_{K_{n}}))=n$ and $\height (I_{K_{m,n-m}})=\height (\ini_<(I_{K_{m,n-m}}))=n-1$. Finally, by \cite[Corollary~3.3.5]{HH} $S/I_{K_{m,n-m}}$ is also Cohen--Macaulay.
\end{proof}

\begin{Theorem}
  \label{primeness-K_{m,n-m}}
  The ideal $I_{K_{m,n-m}}$ is a prime ideal.
\end{Theorem}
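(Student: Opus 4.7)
The plan is to show that $T/I_{K_{m,n-m}}$ is a domain by embedding it into a localization of a polynomial ring. The starting point is \ref{nonzerodivisor}, which tells us that $x_1$, $x_{m+1}$ and $y_1$ --- and hence their product $f=x_1x_{m+1}y_1$ --- are non-zero divisors modulo $I_{K_{m,n-m}}$. Consequently the localization map $T/I_{K_{m,n-m}}\hookrightarrow (T/I_{K_{m,n-m}})_f$ is injective, and it will suffice to prove that $(T/I_{K_{m,n-m}})_f$ is a domain.

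To do so, I would use the standard generators of $I_{K_{m,n-m}}$ to eliminate all of $y_2,\ldots,y_n$ in the localization. The binomials $g_{1i}$ yield $y_i=x_iy_1/x_1$ for $2\leq i\leq m$; the binomial $f_{1,m+1}$ yields $y_{m+1}=-x_1x_{m+1}/y_1$; and the binomials $g_{m+1,j}$ yield $y_j=x_jy_{m+1}/x_{m+1}=-x_1x_j/y_1$ for $m+2\leq j\leq n$. These substitutions should define a ring homomorphism
\[
\beta\colon T \longrightarrow R:=K[x_1,\ldots,x_n,y_1][x_1^{-1},x_{m+1}^{-1},y_1^{-1}],
\]
sending each $x_i$ to itself, $y_1$ to itself, and each $y_i$ with $i\ge 2$ to the expression above. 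The key verification is that $\beta(I_{K_{m,n-m}})=0$: for every remaining $f_{ij}$ and $g_{ij}$ (i.e., those not directly used in the elimination) the two terms collapse to the same monomial under $\beta$. This is routine but slightly repetitive, and it is the main bookkeeping obstacle in the argument.

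In the opposite direction, the natural map $\alpha\colon R\to (T/I_{K_{m,n-m}})_f$ given by $x_i\mapsto x_i$ and $y_1\mapsto y_1$ is well defined because $x_1$, $x_{m+1}$ and $y_1$ become units in the localization. The elimination shows that $\alpha$ is surjective, while the map induced by $\beta$ on $(T/I_{K_{m,n-m}})_f$ is a left inverse of $\alpha$, so $\alpha$ is also injective. Hence $\alpha$ is an isomorphism, $(T/I_{K_{m,n-m}})_f\cong R$ is a localization of a polynomial ring and therefore a domain, and combining this with the embedding from the first paragraph proves that $I_{K_{m,n-m}}$ is prime. A minor case distinction is needed when one of the blocks is a singleton ($m=1$ or $m=n-1$), where some of the $g_{ij}$ relations are vacuous, but the elimination simplifies in those cases and the argument goes through without change.
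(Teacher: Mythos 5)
Your proof is correct, and it follows a genuinely different route from the paper's. Both arguments reduce to showing that a suitable localization of $T/I_{K_{m,n-m}}$ is a domain, relying on \ref{nonzerodivisor} for injectivity of the localization map. The paper inverts the full product $y_1\cdots y_n$, performs the substitution $z_i = x_i/y_i$, quotients out the linear forms $z_i - z_j$ coming from the $g_{ij}$, and is left with the single quadric relation $z_1 z_{m+1}+1$, which is observed to be irreducible. You instead invert only $x_1 x_{m+1} y_1$ and use $g_{1i}$, $f_{1,m+1}$, $g_{m+1,j}$ to eliminate every $y_i$ with $i\geq 2$; the verification that $\beta$ kills every standard generator (including the "unused" $f_{ij}$ and $g_{ij}$) is exactly the bookkeeping you flag, and it does check out in all four cases ($i=1$ or $i>1$ crossed with $j=m+1$ or $j>m+1$ for the $f$'s, and similarly for the $g$'s). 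The payoff of your version is that the localized quotient is identified outright with a localization of a polynomial ring, so no irreducibility argument is needed and one even gets regularity for free; the cost is that you must establish the two-sided comparison ($\alpha$ surjective by elimination, $\bar\beta\circ\alpha=\mathrm{id}$ for injectivity) and track the sign conventions carefully. Both routes are clean; yours inverts fewer elements and yields the stronger structural statement, while the paper's is shorter because quotienting by the linear forms $z_i - z_j$ is immediate.
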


In the proof of \ref{primeness-K_{m,n-m}} we will study the
localization $T_Y$ of $T$ at the multiplicative set
$Y = \{ (y_1\cdots y_n)^i~:~i \geq 1\}$ of all powers of
$y_1\cdots y_n$. Since we will make use of this ring also subsequently
we fix this notation for the rest of the paper.
Notice that
\[
  T_Y=K[z_1,\ldots,z_{n},y_1^{\pm 1},\ldots, y_{n}^{\pm 1}],
\]
with $z_i=x_i/y_i$ for $i=1,\ldots,n$, and furthermore, that the elements
$z_1,\ldots,z_{n}$ are algebraically independent.

\begin{proof}
  Because of \ref{nonzerodivisor} it suffices to show that $I_{K_{m,n-m}}T_Y$ is
  a prime ideal in the ring $T_Y$.
  Since in $T_Y$ all $y_i$ are units, the generators \eqref{generators3} of $I_{K_{m,n-m}}T_Y$ can be
  rewritten as
  \begin{eqnarray}
    z_iz_j+1, & 1\leq i\leq m, \quad m+1\leq j\leq n,\label{othergenerators}\\
    z_i-z_j, & 1\leq i<j\leq m \quad \text{or} \quad m+1\leq i<j\leq n.\label{linearforms}
  \end{eqnarray}
  In order to see that $T_Y/I_{K_{m,n-m}}T_Y$ is a domain, we first consider the
  quotient $R$ of $T_Y$ by the linear forms in \eqref{linearforms} and denote
  by $\overline{I}$ the image of $I_{K_{m,n-m}}T_Y$ in $R$.
  Notice that $R$ is isomorphic to
  $K[z_1,z_{m+1}, y_1^{\pm 1},\ldots, y_{n}^{\pm 1}]$, and that
  $T_Y/I_{K_{m,n-m}}T_Y \iso R/ \overline{I}R$. Since the residue class map
  $T_Y\to R$ identifies  $z_i$ with $z_1$ for $i=1,\ldots,m$  and with $z_{m+1}$
  for $i=m+1,\ldots,n$,  we see that $\overline{I}=(z_1z_{m+1}+1)$. Since the
  polynomial $z_1z_{m+1}+1$ is irreducible, we conclude that $R/ \overline{I}R$,
  and hence also $T_Y/I_{K_{m,n-m}}T_Y$ is a domain, as desired.
\end{proof}

\begin{Theorem}
  \label{primeness-K_{n}}
  Let $n>2$ be an integer.
  \begin{enumerate}
  \item[{\rm (a)}] If $\sqrt{-1}\notin K$, then $I_{K_n}$ is a prime ideal.
  \item[{\rm (b)}] If $\sqrt{-1}\in K$ and $\chara(K)\neq 2$, then $I_{K_n}$ is
    a radical ideal. More precisely,
    \[
      I_{K_n}=(x_1+\sqrt{-1}y_1,\ldots,x_n+\sqrt{-1}y_n)\sect (x_1-\sqrt{-1}y_1,\ldots,x_n-\sqrt{-1}y_n).
    \]
  \item[{\rm (c)}] If $\chara(K)=2$, then $I_{K_n}$ is a primary  ideal with
    \[
      \sqrt{I_{K_n}}=(x_1+y_1,\ldots,x_n+y_n).
    \]
  \end{enumerate}
\end{Theorem}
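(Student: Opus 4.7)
The plan is to follow the same localization strategy used in the proof of \ref{primeness-K_{m,n-m}}: because \ref{nonzerodivisor} tells us that $y_1,\ldots,y_n$ are non-zero divisors modulo $I_{K_n}$, it suffices in each of (a), (b), (c) to analyze the extension $I_{K_n}T_Y$ and then contract back to $T$. In $T_Y = K[z_1,\ldots,z_n,y_1^{\pm 1},\ldots,y_n^{\pm 1}]$ with $z_i = x_i/y_i$, the standard generators of $I_{K_n}T_Y$ can be rewritten (after dividing by the unit monomials in the $y_i$) as
\[
z_iz_j+1 \quad (i<j), \qquad z_i-z_j \quad (i<j), \qquad z_i^2+1 \quad (1\le i\le n).
\]

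First I would pass to the quotient $R := T_Y/(z_i-z_j : i<j)$, which is isomorphic to the Laurent polynomial ring $K[z,y_1^{\pm1},\ldots,y_n^{\pm1}]$ where $z$ denotes the common image of all $z_i$. Since the linear forms $z_i-z_j$ already lie in $I_{K_n}T_Y$, the images of both $z_iz_j+1$ and $z_i^2+1$ become the single polynomial $z^2+1$, so the image of $I_{K_n}T_Y$ in $R$ is the principal ideal $(z^2+1)R$ and we obtain an isomorphism
\[
T_Y/I_{K_n}T_Y \;\cong\; R/(z^2+1)R.
\]
This reduces everything to a study of the single element $z^2+1$ in the unique factorization domain $R$.

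Now the three cases split neatly. In case (a), $z^2+1$ is irreducible in $K[z]$ (hence in $R$) because $\sqrt{-1}\notin K$, so $R/(z^2+1)R$ is a domain, $I_{K_n}T_Y$ is prime in $T_Y$, and contracting via the non-zero divisors $y_i$ shows $I_{K_n}$ is prime. In case (b), $z^2+1 = (z-\sqrt{-1})(z+\sqrt{-1})$ with coprime factors (here I use $\chara(K)\neq 2$, so $2\sqrt{-1}$ is a unit), which gives $(z^2+1)R = (z-\sqrt{-1})R\cap(z+\sqrt{-1})R$ as an intersection of two prime ideals; pulling back through the isomorphism, each factor $z\pm\sqrt{-1}$ lifts to $z_i\pm\sqrt{-1}$ for every $i$, and multiplying through by the unit $y_i$ replaces $z_i\pm\sqrt{-1}$ by $x_i\pm\sqrt{-1}\,y_i$, yielding the claimed intersection in $T_Y$. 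In case (c), $z^2+1 = (z+1)^2$ in characteristic $2$, so $(z^2+1)R$ is $(z+1)^2R$, which is primary with radical $(z+1)R$; primariness is preserved under contraction, and the same rescaling argument identifies the radical in $T$ with $(x_1+y_1,\ldots,x_n+y_n)$.

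The only real subtlety is the final contraction step in (b) and (c): one must check that the ``linear'' ideals $(x_1\pm\sqrt{-1}\,y_1,\ldots,x_n\pm\sqrt{-1}\,y_n)$ and $(x_1+y_1,\ldots,x_n+y_n)$ in $T$ extend and contract properly, i.e.\ that $y_1\cdots y_n$ is a non-zero divisor modulo each of them. This is immediate because the quotient of $T$ by any such ideal is a polynomial ring in the remaining $n$ variables, where the images of the $y_i$ are part of a transcendence basis. With this observation in hand, the equality $I_{K_n} = I_{K_n}T_Y \cap T$ (given by \ref{nonzerodivisor}) transfers each decomposition from $T_Y$ down to $T$ unchanged, completing the proof.
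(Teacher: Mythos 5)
Your proof is correct and follows the same core strategy as the paper's: localize at $Y$, pass to the quotient $R$ of $T_Y$ by the linear forms $z_i - z_j$, analyze the single polynomial $z^2+1$ in the Laurent ring $R \cong K[z,y_1^{\pm 1},\ldots,y_n^{\pm 1}]$, and contract back to $T$ using \ref{nonzerodivisor}. The finishing steps differ slightly in emphasis: in (b) you carry the coprime factorization of $z^2+1$ all the way through the extension--contraction along $T\to T_Y$ (verifying that the two candidate linear ideals are saturated with respect to $Y$), whereas the paper instead combines the count of minimal primes with the height computation $\height I_{K_n}=n$ from \ref{more} to identify $P_1,P_2$; and in (c) your localization argument makes the primariness of $I_{K_n}$ explicit, whereas the paper's in-ring manipulation of the generators only establishes the radical formula and leaves the primariness claim implicit.
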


\begin{proof}
   As in the proof of \ref{primeness-K_{m,n-m}}
   we consider the image $I_{K_n}T_Y$ of our ideal in
   $T_Y = K[z_1,\dots, z_n,y_1^{\pm 1},\ldots, y_n^{\pm 1}]$.
   It is generated by the polynomials
   \begin{eqnarray}
     \label{localized}
     z_iz_j+1, z_i-z_j,\quad 1\leq i<j\leq n,\label{alsolinear} \\
     z_i^2+1, \quad\quad\quad  1\leq i\leq n.\label{squaregenerators}
   \end{eqnarray}
   Let $R$ be the residue class ring of $T_Y$ modulo the linear forms
   given in \eqref{alsolinear}. Then
   $R\iso K[z_1,y_1^{\pm 1},\ldots, y_{n}^{\pm 1}]$
   and $T_Y/I_{K_n}T_Y \iso R/(z_1^2+1)$.
   \begin{itemize}
     \item[(a)]
       It follows that if $\sqrt{-1}\not\in K$,
       then $T_Y/I_{K_n}T_Y$ is a domain, and hence $I_{K_n}$ is a prime ideal.
     \item[(b)] Since $T_Y/I_{K_n}T_Y\iso R/((z_1+\sqrt{-1})(z_1-\sqrt{-1}))$
       it follows that $I_{K_n}$ is radical and has exactly two minimal prime
       ideals. The ideals
       $P_1=(x_1+\sqrt{-1}y_1,\ldots,x_n+\sqrt{-1}y_n)$ and
       $P_2=(x_1-\sqrt{-1}y_1,\ldots,x_n-\sqrt{-1}y_n)$ are prime ideals
       of height $n$ containing $I_{K_n}$. By
       \ref{more} we know that $\height (I_{K_n})=n$. It follows that
       $\{P_1, P_2\}$ is the set of  minimal prime ideals of $I_{K_n}$.
     \item[(c)] Since $\chara(K)=2$, we have $x_i^2+y_i^2=(x_i+y_i)^2$
       for all $i$. This shows that $I_{K_n}$ is not a prime ideal in this case.
       Furthermore, it follows that $x_i+y_i\in \sqrt{I_{K_n}}$ for all $i$.
       Since for all $i<j$, $g_{ij}=(x_i+y_i)x_j+(x_j+y_j)x_i$ and
       $f_{ij}=(x_i+y_i)x_j+(x_j+y_j)y_i$, we see that
       $\sqrt{I_{K_n}}=(x_1+y_1,\ldots, x_n+y_n)$, as desired.
  \end{itemize}
\end{proof}

\section{A Gr\"{o}bner basis for permanental edge ideals}
\label{section0}

The main goal of this section is to prove \ref{radical-L_G}
and \ref{char2}. For the first result we apply a
certain linear transformation of coordinates, and show that $L_G$ admits
a squarefree initial ideal with respect to the new coordinates and a
suitable monomial order. Assume that $\sqrt{-1}\in K$
and $\chara(K)\neq 2$.
We consider the following linear transformation $\varphi$ with
$\varphi(x_i)=x_i - y_i$ and $\varphi(y_i)= \sqrt{-1} (x_i + y_i)$ for
all $i$. Then for every $i \neq j$, the binomial $x_i x_j + y_i y_j$ maps
to $-2(x_i y_j + x_j y_i)$. Let $\Pi_{G}=\varphi(L_G)$. Since
$\chara(K) \neq 2$, it follows that
\[
  \Pi_G = (x_i y_j + x_j y_i\, :\, \{i,j\} \in E(\overline{G})).
\]
The generators of $\Pi_G$ are those $2$-permanents of the matrix
$
  \begin{bmatrix}
    x_{1} & \cdots & x_{n} \\ y_{1} & \cdots & y_{n}
  \end{bmatrix}
$
whose column indices correspond to edges of $\overline{G}$. Therefore, we call
$\Pi_G$ the \textit{permanental edge ideal} of $\overline{G}$.
Replacing the above $2$-permanents by $2$-minors, one obtains the classical
binomial edge ideal of $\overline{G}$.

In order to describe the Gr\"{o}bner basis of the ideal $\Pi_G$ we
introduce some terminology and notation. Let $i$ and $j$ be
two distinct vertices of $G$. A path of length $r$ in a graph $G$ on
vertex set $[n]$ from $i$ to $j$ is a sequence
$\pi_{ij}:i=i_0,i_1,\dots,i_r=j$ of pairwise distinct
vertices such that $\{i_k,i_{k+1}\}\in E(G)$ for all $k$. By an even
(odd) path we mean a path of even (odd) length. We say that the path
$\pi_{ij}$ is \textit{admissible} if $i<j$, and for each $k=1,\dots,r-1$,
one has either $i_k < i$ or $i_k > j$. In the case that $\pi_{ij}$ is
admissible we attach to it the monomial
\[
  u_{\pi_{ij}} = \prod_{i_k>j} x_{i_k} \prod_{i_k<i} y_{i_k}.
\]

As a preparation for the description of a Gr\"obner basis of
$\Pi_G$ we prove the following lemma.

\begin{Lemma}
  \label{lemmapaths}
  Let $G$ be a graph on vertex set $[n]$.
  Let $k < \ell$ and let $\pi_{k\ell}$ and $\sigma_{k\ell}$ be two admissible paths from
  $k$ to $\ell$ in $G$ such that
  \begin{itemize}
     \item $\pi_{k\ell}$ is odd and $\sigma_{k\ell}$ is even,
     \item $D=\{ j \in V(\pi_{k\ell}) \cup V(\sigma_{k\ell}) : j>\ell \}
	      \neq \emptyset$.
  \end{itemize}
  Set $h = \min D$.
  Then there are two admissible paths
  $\tau_1$ and $\tau_2$ both with endpoints either $k$ and $h$ or
  $\ell$ and $h$ such that one of $\tau_1$ and $\tau_2$ is odd and
  one is even. In particular, the monomial
  \[
    \prod_{\substack{j \leq \ell,\\ j \in V(\tau_1) \cup V(\tau_2)}} y_j
    \prod_{\substack{j \geq h,\\ j \in V(\tau_1) \cup V(\tau_2)}} x_j \quad \text{divides }
    \prod_{\substack{j \leq \ell,\\ j \in V(\pi_{k\ell}) \cup V(\sigma_{k\ell})}} y_j
    \prod_{\substack{j \geq h,\\ j \in V(\pi_{k\ell}) \cup V(\sigma_{k\ell})}} x_j.
  \]
\end{Lemma}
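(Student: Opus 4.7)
The plan is to distinguish cases according to whether $h$ lies on the odd path $\pi_{k\ell}$ or exclusively on the even path $\sigma_{k\ell}$. Throughout, I would invoke the following \emph{key admissibility observation}: since $h = \min D$, every vertex $v$ of $V(\pi_{k\ell}) \cup V(\sigma_{k\ell})$ with $v > \ell$ satisfies $v \geq h$. Hence any simple path whose vertices lie in $V(\pi_{k\ell}) \cup V(\sigma_{k\ell})$ and whose endpoints are $\ell$ and $h$ is automatically admissible, because each internal vertex, being distinct from $\ell$ and $h$, is either $\leq k < \ell$ or $> h$. The analogous statement holds for endpoints $k$ and $h$, provided the path does not use $\ell$ as an internal vertex.

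\textbf{Case A:} $h \in V(\pi_{k\ell})$. Writing $\pi_{k\ell} = (k = i_0, \ldots, i_s = h, \ldots, i_r = \ell)$, I set $\tau_1 = (i_0, \ldots, i_s)$ and $\tau_2 = (i_r, i_{r-1}, \ldots, i_s)$. By the observation, $\tau_1$ is an admissible $k$-to-$h$ path (it avoids $\ell = i_r$ as an internal vertex) and $\tau_2$ is an admissible $\ell$-to-$h$ path. Their lengths sum to $|\pi_{k\ell}|$, which is odd, so they have opposite parities. Since $V(\tau_1) \cup V(\tau_2) = V(\pi_{k\ell})$, the monomial divisibility is immediate.

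\textbf{Case B:} $h \in V(\sigma_{k\ell}) \setminus V(\pi_{k\ell})$. Split $\sigma_{k\ell} = \sigma_1 \cdot \sigma_2$ at $h$; both pieces are admissible and share a common parity $p$ since $|\sigma_1| + |\sigma_2| = |\sigma_{k\ell}|$ is even. I would take $\tau_1 = \sigma_2$. The natural candidate for $\tau_2$ is the walk $\pi_{k\ell}^{-1} \cdot \sigma_1$ from $\ell$ to $h$, of length $|\pi_{k\ell}| + |\sigma_1|$ and parity $1-p$. When $V(\pi_{k\ell}) \cap V(\sigma_1) = \{k\}$ this walk is a simple path, and the observation makes it admissible, finishing the case. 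Note that the ``symmetric'' attempt $\pi_{k\ell} \cdot \sigma_2^{-1}$ from $k$ to $h$ is not usable as a path, since it would employ $\ell$ as an internal vertex.

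The main obstacle is the remaining subcase of Case B, where $\pi_{k\ell}$ and $\sigma_1$ share an internal vertex and the natural candidate fails to be simple. I would handle this by strong induction on $|\pi_{k\ell}| + |\sigma_{k\ell}|$: if $\pi_{k\ell}$ and $\sigma_{k\ell}$ share an internal vertex $v$, a swap at $v$ produces new walks $\pi'\!\cdot\sigma''$ and $\sigma'\!\cdot\pi''$ from $k$ to $\ell$ whose vertex sets are contained in $V(\pi_{k\ell}) \cup V(\sigma_{k\ell})$ and of opposite parities (the sum of their lengths remains $|\pi_{k\ell}|+|\sigma_{k\ell}|$, which is odd). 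After extracting simple sub-paths from these walks one obtains a strictly shorter pair of admissible paths from $k$ to $\ell$ of opposite parities, to which the inductive hypothesis applies. The delicate bookkeeping is choosing $v$ so that $h$ still lies on one of the new paths and remains the minimum of the corresponding $D$-set; I expect this choice — and the verification that admissibility together with the minimality of $h$ survive the swap and the extraction of simple sub-paths — to be the hardest step of the proof.
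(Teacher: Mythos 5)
There is a genuine gap in Case A. The lemma asserts the existence of two admissible paths $\tau_1,\tau_2$ \emph{with the same pair of endpoints} (either both from $k$ to $h$, or both from $\ell$ to $h$). Your Case A splits $\pi_{k\ell}$ at $h$ and produces $\tau_1$ from $k$ to $h$ and $\tau_2$ from $\ell$ to $h$ -- two paths with \emph{different} endpoint pairs. This does not meet the conclusion of the lemma, and the distinction is not cosmetic: in the proof of Theorem~\ref{Grobner} (Cases~I and~III) the lemma is invoked precisely to manufacture a type~(iii) monomial $\lcm(u_{\tau_1},u_{\tau_2})\,y_i x_{h_0}$, and a type~(iii) generator is, by definition, built from an odd and an even admissible path \emph{between the same two vertices}. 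Splitting $\pi_{k\ell}$ alone cannot produce such a pair; one must also use $\sigma_{k\ell}$. When $h\in V(\pi_{k\ell})\cap V(\sigma_{k\ell})$ this is done by splitting both paths at $h$ and a parity count, and when $h$ lies on only one path the paper obtains the second same-endpoint path by detouring through an odd cycle of $\pi_{k\ell}\cup\sigma_{k\ell}$ (which must exist since $\pi_{k\ell}$ and $\sigma_{k\ell}$ have opposite parities).

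There is a second gap in the inductive step of Case B. Swapping the two paths at a shared internal vertex $v$ produces walks $\pi'\cdot\sigma''$ and $\sigma'\cdot\pi''$ whose total length is exactly $|\pi_{k\ell}|+|\sigma_{k\ell}|$, not strictly less. If those walks happen to be simple (which can certainly happen), there is nothing to extract, so your induction quantity does not decrease and the argument does not terminate. Moreover, when extraction is necessary, removing a cycle from a walk changes its parity if the removed cycle is odd, so the resulting simple paths need not be one odd and one even -- the very property the induction hypothesis requires. Finally, as you anticipate, even after extraction the minimum of the new $D$-set may no longer equal $h$, and ensuring $h$ survives the swap requires additional work not supplied here. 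Taken together, these issues mean the proposal as written does not establish the lemma; the route through an explicit odd cycle (as in the paper's Case~2) avoids the nondecreasing induction and directly yields a same-endpoint pair of opposite parities.
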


\begin{proof}
  Let $\pi_{k\ell}: k=i_0,i_1,\dots,i_r=\ell$ and
  $\sigma_{k\ell}: k=i'_0,i'_1,\dots,i'_s=\ell$.

  \smallskip

  \noindent {\sf Case 1:} $h \in V(\pi_{k\ell}) \cap V(\sigma_{k\ell})$.

  Then $h$ splits $\pi_{k\ell}$ and $\sigma_{k\ell}$ in two admissible paths:
  \[
    \pi_{k\ell} = \pi_{kh} \cup \pi_{\ell h} \text{ and }
    \sigma_{k\ell} = \sigma_{kh} \cup \sigma_{\ell h}.
  \]

  Since $\pi_{k\ell}$ is odd, it follows that $\pi_{kh}$ is odd and
  $\pi_{\ell h}$ is even or vice versa. Suppose that $\pi_{kh}$ is odd and
  $\pi_{\ell h}$ is even. Similarly, since $\sigma_{k\ell}$ is even, it
  follows that $\sigma_{kh}$ and $\sigma_{\ell h}$ are both odd or both even.
  If they are both odd we set $\tau_1 = \pi_{\ell h}$ and
  $\tau_2 = \sigma_{\ell h}$, otherwise we set $\tau_1 = \pi_{kh}$ and
  $\tau_2 = \sigma_{kh}$. Notice that $\tau_1$ and $\tau_2$ are admissible
  in both cases.

  \smallskip

  \noindent {\sf Case 2:} $h$ belongs to exactly one of the paths $\pi_{k\ell}$
  and $\sigma_{k\ell}$.

  Without loss of generality, we may assume that
  $h = i_c \in V(\pi_{k\ell}) \setminus V(\sigma_{k\ell})$ for some
  $c \in \{ 1,\dots,r-1 \}$. Then $h$ belongs to a cycle of
  $\pi_{k\ell} \cup \sigma_{k\ell}$. Notice that
  $\pi_{k\ell} \cup \sigma_{k\ell}$ contains at least one odd cycle $C$,
  since $\pi_{k\ell}$ is odd and $\sigma_{k\ell}$ is even. Let $a,b$ be the
  minimum integers such that $i_a=i'_b \in V(C)$ and let $d,e$ be the
  maximum integers such that $i_d=i'_e \in V(C)$. Let $h \notin V(C)$.
  If $i_d < i_c=h$, then we define
  \[
    \tau_1: k=i_0,i_1,\dots,i_c=h \text{ and }
  \]
  \[
    \tau_2: k=i_0,i_1,\dots,i_a=i'_b,i'_{b+1},\dots,i'_e=i_d,i_{d+1},\dots,i_c=h,
  \]
  where $\tau_1$ and $\tau_2$ are admissible paths, $\tau_1$ is odd and
  $\tau_2$ is even or vice versa, since $C$ is an odd cycle. On the other
  hand, if $i_a > i_c=h$, then we define
  \[
    \tau_1: \ell=i_r,i_{r-1},\dots,i_c=h \text{ and }
  \]
  \[
    \tau_2: \ell=i_r,i_{r-1},\dots,i_d=i'_e,i'_{e-1},\dots,i'_b=i_a,i_{a-1},\dots,i_c=h,
  \]
  where $\tau_1$ and $\tau_2$ are admissible paths, $\tau_1$ is odd and
  $\tau_2$ is even or vice versa, since $C$ is an odd cycle.

  Now suppose that $h \in V(C)$. We define
  \[
    \tau_1: k=i_0,i_1,\dots,i_c=h \text{ and }
  \]
  \[
    \tau_2: k=i_0,i_1,\dots,i_a=i'_b,i'_{b+1},\dots,i'_e=i_d,i_{d-1},\dots,i_c=h,
  \]
  where $\tau_1$ and $\tau_2$ are admissible paths, $\tau_1$ is odd and
  $\tau_2$ is even or vice versa, since $C$ is an odd cycle.

  In each case the last part of the statement follows.
\end{proof}

\begin{Theorem}
  \label{Grobner}
  Let  $G$ be a graph on $[n]$ and assume that $\chara(K)\neq 2$. Then with
  respect to the lexicographic order on $K[x_1,\ldots, x_n,y_1,\ldots,y_n]$
  induced by $x_1 > \cdots > x_n > y_1 > \cdots > y_n$ the following elements
  form a Gr\"{o}bner basis of the ideal $\Pi_G$:
  \begin{itemize}
    \item[{\rm (i)}] $u_{\pi_{ij}}b_{ij}$, where $\pi_{ij}$ is an odd admissible
      path and $b_{ij} = x_i y_j + x_j y_i$,
    \item[{\rm (ii)}] $u_{\pi_{ij}}g_{ij}$, where $\pi_{ij}$ is an even admissible
      path and $g_{ij} = x_i y_j - x_j y_i$,
    \item[{\rm (iii)}] $\lcm(u_{\pi_{ij}}, u_{\sigma_{ij}})y_ix_j$, where
      $\pi_{ij}$ is an odd and $\sigma_{ij}$ is an even admissible path,
    \item[{\rm (iv)}]
      $
       \begin{cases}
        y_b \prod_{h \in W} x_h & \text{if $b<h$ for every $h \in W$}\\
        x_b \prod_{h \in W} y_h & \text{if $b>h$ for every $h \in W$}
       \end{cases}
      $,

      where $W = V(\pi_{ij}) \cup V(\sigma_{ij}) \cup V(\tau_{ab}) \setminus
      \{b\}$, $\pi_{ij}$ is an odd and $\sigma_{ij}$ is an even admissible
      path from $i$ to $j$, $\tau_{ab}$ is a path with endpoints $a$ and $b$,
      such that $a$ is the only vertex of $\tau_{ab}$ that belongs to
      $V(\pi_{ij}) \cup V(\sigma_{ij})$.
  \end{itemize}
\end{Theorem}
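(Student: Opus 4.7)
The plan is to apply Buchberger's criterion: verify that the elements in (i)--(iv) lie in $\Pi_G$ and that all S-polynomials among them reduce to zero modulo the list. For the membership of elements of types (i) and (ii), I would use a telescoping argument along an admissible path $\pi_{ij}: i=i_0,i_1,\dots,i_r=j$. The observation is that, thanks to the sign in $b_{i_s,i_{s+1}} = x_{i_s}y_{i_{s+1}} + x_{i_{s+1}}y_{i_s}$, multiplying consecutive edge generators by appropriate monomials of $x$'s (for interior vertices larger than $j$) and $y$'s (for interior vertices smaller than $i$), and alternately adding or subtracting, produces $u_\pi g_{ij}$ when $r$ is even and $u_\pi b_{ij}$ when $r$ is odd. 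The length~$2$ case, which one checks directly, shows that $y_{i_1} g_{i_0,i_2}$ equals $y_{i_2} b_{i_0,i_1} - y_{i_0} b_{i_1,i_2}$ up to sign, and the general case follows by iterating on $r$. The monomials in (iii) arise from the S-polynomial $S(u_\pi b_{ij}, u_\sigma g_{ij}) = \lcm(u_\pi,u_\sigma)(b_{ij}-g_{ij}) = 2 \lcm(u_\pi,u_\sigma)\, x_j y_i$, which lies in $\Pi_G$ since $\chara(K)\neq 2$; the monomials in (iv) arise analogously by combining a type (iii) element with a type (i) or (ii) element associated to the auxiliary path $\tau_{ab}$.

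Next I would identify the leading terms. With the order $x_1 > \cdots > x_n > y_1 > \cdots > y_n$, for $i<j$ the leading term of both $b_{ij}$ and $g_{ij}$ is $x_i y_j$, so the leading monomials of the elements in (i) and (ii) are $u_\pi x_i y_j$, while the leading monomials in (iii) and (iv) coincide with the displayed monomials themselves.

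To verify Buchberger's criterion I would partition the pair computations by type. Any pair whose leading monomials are coprime reduces to zero by the product criterion \cite[Lem.~2.3.1]{HH}, which handles most cases at a stroke. For two elements of types (i) and (ii) with the same endpoints $i,j$ and opposite parities, the S-polynomial is, up to the factor $2$, exactly the corresponding type (iii) monomial, hence reduces to zero; for two elements of the same type with the same endpoints the S-polynomial either vanishes identically or again reduces to a type (iii) element. For two elements of types (i)/(ii) with overlapping but distinct endpoints, the S-polynomial carries a monomial factor built from the non-leading terms of each binomial; here \ref{lemmapaths} is the crucial tool, producing two admissible paths of opposite parity into a suitable vertex $h$ so that the associated type (iii) or (iv) element has leading monomial dividing the S-polynomial leading monomial, reducing the remainder. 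For S-polynomials involving a monomial element of type (iii) or (iv) with a binomial of type (i)/(ii), the same strategy applies: \ref{lemmapaths}, applied to the new overlaps created by the auxiliary path $\tau_{ab}$ in the case of (iv), furnishes a dividing Gröbner basis element.

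The main obstacle lies in the combinatorial bookkeeping behind this case analysis: when the admissible paths share interior vertices above $j$ or below $i$, one must verify that the segments used in the reduction remain admissible and that their parities match up so that the correct element in (iii) or (iv) appears. \ref{lemmapaths} is designed precisely to resolve this, and once its conclusion is used to extract two admissible paths of opposite parities into the pivotal vertex $h$, every S-polynomial remainder can be rewritten as a monomial multiple of a generator of type (iii) or (iv), closing the reduction.
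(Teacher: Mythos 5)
Your proposal follows the paper's strategy closely: verify membership of types (i)--(iv) in $\Pi_G$ by induction/telescoping along admissible paths and S-polynomial computations (using $\chara(K)\neq 2$ to obtain the monomials in (iii) and (iv)), then confirm Buchberger's criterion through a type-by-type case analysis in which Lemma~\ref{lemmapaths} supplies the admissible paths of opposite parity needed to produce a dividing monomial of type (iii) or (iv). This matches the paper's two-claim structure (membership, then reduction of all S-pairs) and its reliance on the same key lemma, so it is essentially the same proof.
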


In some parts the following proof is similar to the proof
of corresponding statement for binomial edge ideals
(see \cite{HHHKR}),
but one of the substantial differences is that the Gr\"obner
basis not only contains binomials but also monomials.

\begin{proof}
  Let $\MG$ be the set of elements listed in (i),(ii), (iii) and (iv).

  \smallskip

  \noindent {\sf Claim 1:} $\MG\subset \Pi_G$.

  \indent Note that $b_{ij}\in \MG$ for any edge $\{i,j\}$ of $G$, since each edge is clearly
  an admissible path, so that the generators of $\Pi_G$ belong to $\MG$.
  Now let $i,j \in [n]$, with $i<j$, and let
  $\pi_{ij}: i=i_0,i_1,\dots,i_r=j$ be an admissible path in $G$ from $i$
  to $j$. We show, by induction on $r \geq 1$, that
  $u_{\pi_{ij}} b_{ij} \in \Pi_G$, if $r$ is odd, and
  $u_{\pi_{ij}} g_{ij} \in \Pi_G$, if $r$ is even. The assertion is clearly
  true for $r=1$. Let $r=2$, if $i_1<i$, then
  $S(b_{i_1 i},b_{i_1 j}) = u_{\pi_{ij}}g_{ij}$. If $i_1>j$, then
  $S(b_{i i_1},b_{j i_1}) = -u_{\pi_{ij}}g_{ij}$. Now suppose $r>2$ and define
  the sets $A = \{i_k\, :\, i_k < i\}$ and $B = \{i_\ell\, :\, i_\ell > j\}$.
  One has either $A \neq \emptyset$ or $B \neq \emptyset$. If
  $A \neq \emptyset$, let $i_{k_0} = \max A$, while  if $B \neq \emptyset$,
  let $i_{\ell_0} = \min B$.
  Suppose that $A \neq \emptyset$. It then follows that both paths
  $\pi_1: i_{k_0},i_{k_0-1},\dots,i_1,i_0=i$ and
  $\pi_2: i_{k_0},i_{k_0+1},\dots,i_{r-1},i_r=j$ in $G$ are admissible.
  Now, if $r$ is odd, the induction hypothesis guarantees that either
  $u_{\pi_1} b_{i_{k_0},i}, u_{\pi_2} g_{i_{k_0},j} \in \Pi_G$ or
  $u_{\pi_1} g_{i_{k_0},i}, u_{\pi_2} b_{i_{k_0},j} \in \Pi_G$. In the first
  case, we observe that the $S$-polynomial
  $S(u_{\pi_1} b_{i_{k_0},i}, u_{\pi_2} g_{i_{k_0},j}) =
    u_{\pi_1} u_{\pi_2} y_j (x_{i_{k_0}} y_i + x_i y_{i_{k_0}}) -
    u_{\pi_1} u_{\pi_2} y_i (x_{i_{k_0}} y_j - x_j y_{i_{k_0}}) =
    u_{\pi_{ij}} b_{ij}$,
  hence $u_{\pi_{ij}} b_{ij} \in \Pi_G$. The same conclusion holds in the
  second case. On the other hand, if $r$ is even, the induction hypothesis
  guarantees that either $u_{\pi_1} b_{i_{k_0},i}, u_{\pi_2} b_{i_{k_0},j}
  \in \Pi_G$ or $u_{\pi_1} g_{i_{k_0},i}, u_{\pi_2} g_{i_{k_0},j} \in \Pi_G$.
  In the first case, it is easy to check that
  $S(u_{\pi_1} b_{i_{k_0},i}, u_{\pi_2} b_{i_{k_0},j}) = u_{\pi_{ij}} g_{ij}$,
  hence $u_{\pi_{ij}} g_{ij} \in \Pi_G$. The same conclusion holds also in the
  second case. Similarly one treats the case when $B \neq \emptyset$.

  Moreover, if $\pi_{ij}$ is an odd admissible path and $\sigma_{ij}$ is an
  even admissible path, then $S(u_{\pi_{ij}} b_{ij}, u_{\sigma_{ij}} g_{ij})=
  2 \lcm(u_{\pi_{ij}}, u_{\sigma_{ij}})y_ix_j$. Hence the monomials in (iii) belong to $\Pi_G$.

  Finally, we show that the monomials in (iv) belong to $\Pi_G$. In order to simplify the notation we
  set
  \[
    u_{\pi_{ij}}h_{ij}=\left\{
    \begin{array}{ll}
	u_{\pi_{ij}}b_{ij} & \text{ if $\pi_{ij}$ is an odd path}\\
	u_{\pi_{ij}}g_{ij} & \text{ if  $\pi_{ij}$ is an even path}
    \end{array}\right. .
  \]

  Let us consider $x_b \prod_{h \in W} y_h$ and an odd cycle $C$ contained in
  $\pi_{ij} \cup \sigma_{ij}$. Call $\ell$ the maximum vertex of this
  cycle, consider a path $\tau$ from $\ell$ to $b$ and relabel its vertices as
  $\tau: \ell=j_0,j_1,\dots,j_t=b$. Define $j_{t(0)}=j_0$, $j_{t(1)} =
  \min \{ j_c \in V(\tau) : j_c>\ell, c=1,\dots,t \}$, $j_{t(2)} =
  \min \{ j_c \in V(\tau) : j_c>j_{t(1)}, c=t(1)+1,\dots,t \}$. Proceeding
  in this way we find the sequence $\ell=j_{t(0)}<j_{t(1)}<\dots<j_{t(q)}=b$
  and for each $1 \leq c \leq q$, the path $\tau_c: j_{t(c-1)},j_{t(c-1)+1},
  \dots,j_{t(c)}$ is admissible. Then
  \begin{equation}
    \label{fourthmonomial}
    x_b \prod_{h \in W} y_h = (-1)^{t(q)-t(0)}
    \frac{x_b \prod_{h \in W} y_h}{x_b y_\ell} y_b x_\ell +
    \sum_{c=1}^q (-1)^{t(q)-t(c-1)+1} v_{\tau_c} u_{\tau_c}
    h_{j_{t(c-1)} j_{t(c)}},
  \end{equation}
  where $v_{\tau_c} = \frac{x_b \prod_{h \in W} y_h}{u_{\tau_c} y_{j_{t(c-1)}}
  y_{j_{t(c)}} x_b} y_b$ and $u_{\tau_c} h_{j_{t(c-1)} j_{t(c)}} \in
  \mathcal G$. We show that the first summand in \eqref{fourthmonomial} is a
  multiple of a monomial of the form (iii). Let
  $k = \max \{ h \in V(C) : h<\ell \}$, then we have two admissible paths
  $\pi_{k\ell}$ and $\sigma_{k\ell}$ from $k$ to $\ell$ whose union is the cycle $C$
  and such that one of them is odd and the other one is even. Suppose that
  $\pi_{k\ell}$ is odd, then $\frac{\prod_{h \in W} y_h}{y_\ell} y_bx_\ell = x_\ell
  \prod_{h \in C \setminus \{\ell\}} y_h =
  \lcm(u_{\pi_{k\ell}},u_{\sigma_{k\ell}}) y_k x_\ell$ is a monomial of the form (iii),
  hence it belongs to $\mathcal G$, and divides the last summand in
  \eqref{fourthmonomial}.

  \smallskip

  \noindent {\sf Claim 2:} All $S$-polynomials of elements of
  $\MG$ reduce to zero.

  \indent We distinguish several cases of $S$-polynomials for the binomials and
  monomials from (i)-(iv).

  \noindent {\sf Case I:} $S$-polynomials of binomials from (i) and (ii)

  Notice that
  $S(u_{\pi_{ij}}h_{ij}, u_{\sigma_{k\ell}}h_{k\ell})$ reduces to zero,
  if $\{i,j\}\sect\{k,\ell\}=\emptyset$ or if $i=\ell$, or $k=j$, because
  in these cases  the initial monomials $\ini_<(h_{ij})$ and
  $\ini_<(h_{k\ell})$ form a regular sequence, so that the assertion follows
  from \cite[Prob. 2.17]{EH}. Also
  $S(u_{\pi_{ij}}h_{ij}, u_{\sigma_{k\ell}}h_{k\ell}) = 0$, if $i=k, j=\ell$
  and $\pi_{ij}, \sigma_{ij}$ are both odd or both even. Thus there remain
  the cases that $i=k$ and $j \neq \ell$, or $j=\ell$ and $i \neq k$, and
  the case that $i=k$, $j=\ell$, the path $\pi_{ij}$ is odd and the path
  $\sigma_{ij}$ is even, or vice versa. In the last case the $S$-polynomial
  yields a scalar multiple of one of the monomials in (iii). Thus we need only
  to deal with the first two cases. For that we may assume that $i=k$ and $j<\ell$.

  Let $\pi_{ij}: i=i_0,i_1,\ldots,i_r=j$ and $\sigma_{i\ell}:
  i=i_0',i_1',\ldots,i_s'=\ell$. Then there exist indices $a$ and $b$ such
  that
  \[
    i_a=i_b'\quad \text{and}\quad
    \{i_{a+1},\ldots,i_r\}\sect\{i_{b+1}',\ldots,i_s'\}=\emptyset.
  \]

  Consider the path
  \[
    \tau: j=i_r,i_{r-1},\ldots,i_{a+1},i_a=i'_b,i'_{b+1},\ldots,i'_{s-1}, i'_s=\ell
  \]
  from $j$ to $\ell$. To simplify the notation we write this path as
  $\tau\: j=j_0,j_1,\ldots,j_t=\ell$. Notice that
  \begin{equation}\label{Spairs}
    S(u_{\pi_{ij}} h_{ij},u_{\sigma_{i\ell}} h_{i\ell}) =
    \begin{cases}
      w g_{j\ell}, & \text{if $\pi_{ij}$ and $\sigma_{i\ell}$ are both odd
      or both even}, \\
      w b_{j\ell}, & \text{otherwise},
    \end{cases}
  \end{equation}
  where $w = (-1)^{r+1} y_i \lcm(u_{\pi_{ij}},u_{\sigma_{i\ell}})$.

  Suppose that we are in the first case of \eqref{Spairs}. If the path
  $\tau$ from $j$ to $\ell$ is even, then we proceed as in the proof of
  \cite[Thm. 2.1]{HHHKR}, and set
  $j_{t(1)} = \min\{ \, j_c \, : \, j_c > j, \, c = 1, \ldots, t \, \}$ and
  $j_{t(2)} = \min\{ \, j_c \, : \, j_c > j, \, c = t(1) + 1, \ldots, t \, \}$.
  Proceeding in this way yields the integers
  $0 = t(0) < t(1) < \cdots < t(q) = t$. It then follows that
  $j =j _{t(0)} <  j_{t(1)} < \cdots < j_{t(q)} = \ell$
  and for each $1 \leq c \leq t$, the path
  $\tau_c : j_{t(c-1)}, j_{t(c-1)+1}, \ldots, j_{t(c)-1}, j_{t(c)}$
  is admissible.

  As in the proof of \cite[Thm. 2.1]{HHHKR} one shows that
  \[
    S(u_{\pi_{ij}} h_{ij}, u_{\sigma_{i\ell}} h_{i\ell}) =
    \sum_{c=1}^{q} (-1)^{t(c-1)}v_{\tau_c}
    u_{\tau_c}h_{j_{t(c-1)} j_{t(c)}}
  \]
  is a standard expression of
  $S(u_{\pi_{ij}} h_{ij}, u_{\sigma_{i\ell}} h_{i\ell})$ whose remainder is
  equal to $0$, where each $v_{\tau_c}$ is the monomial defined as
  \[
    v_{\tau_c}=\frac{w x_j x_\ell}{u_{\tau_c} x_{j_{t(c-1)}}x_{j_{t(c)}}},
    \quad\quad\text{for $1 \leq c \leq q$}.
  \]

  On the other hand, if the path $\tau$ is odd, then we need a different
  argument. First observe that one of the paths $\pi_1 = \pi_{ij}$ and
  $\pi_2 : i=i'_0,\dots,i'_b=i_a, i_{a+1},\dots, i_r=j$ is even and the other
  one is odd. Moreover they are both admissible: this is clear for
  $\pi_1$; as for $\pi_2$, if $h \in \{i'_0,\dots,i'_b\}$, then $h \leq i$ or
  $h > \ell > j$, if $h \in \{i_{a+1},\dots,i_r\}$, then $h<i$ or $h \geq j$.
  Since $\pi_1$ and $\pi_2$ have the same endpoints, they produce the
  monomial $\lcm(u_{\pi_1}, u_{\pi_2}) y_ix_j$ of the form (iii) that divides
  $wx_jy_\ell$.

  Now let $D = \{h \in V(\pi_1) \cup V(\pi_2) : h>j\}$. If $D = \emptyset$,
  then define the paths
  $\pi_3 : i=i_0,\dots,i_a=i'_b, i'_{b+1},\dots,i'_s=\ell$ and
  $\pi_4 = \sigma_{i\ell}$. One of them is odd and the other one is even and
  both are admissible: this is clear for $\pi_4$; as for $\pi_3$, if
  $h \in \{i_0,\dots,i_a\}$, then $h \leq i$ because $D=\emptyset$, if
  $h \in \{i'_{b+1},\dots,i'_s\}$, then $h < i$ or $h \geq \ell$. Since
  $\pi_3$ and $\pi_4$ have the same endpoints, they produce the monomial
  $\lcm(u_{\pi_3}, u_{\pi_4}) y_ix_\ell$ of the form (iii) that divides
  $wx_\ell y_j$. Then the $S$-polynomial reduces to zero.

  Suppose that $D \neq \emptyset$ and set $h_0 = \min D$. Then by
  \ref{lemmapaths} there exist two admissible paths $\tau_1$ and
  $\tau_2$ both with endpoints $i$ and $h_0$ or $j$ and $h_0$, such that
  $\tau_1$ is odd and $\tau_2$ is even or vice versa. In the first case,
  if $\tau_1$ and $\tau_2$ have endpoints $i$ and $h_0$, then the monomial
  $\lcm(u_{\tau_1}, u_{\tau_2}) y_ix_{h_0}$ divides $w$. If conversely
  $\tau_1$ and $\tau_2$ have endpoints $j$ and $h_0$, then the monomial
  $\lcm(u_{\tau_1}, u_{\tau_2}) y_jx_{h_0}$ divides $wx_\ell y_j$.
  In any case the $S$-polynomial reduces to zero.

  \noindent {\sf Case II:} $S$-polynomials of monomials from (iii) and (iv)
  and binomials from (i) and (ii)

  For the first case, it is enough to show that
  $S(u_{\pi_{ij}}b_{ij},\lcm(u_{\pi_{k\ell}},u_{\sigma_{k\ell}}) y_k x_\ell)$
  reduces to zero. The corresponding $S$-polynomial for an even path differs
  only by sign. Notice that
  \begin{equation} \label{Spair1}
   S(u_{\pi_{ij}}b_{ij},\lcm(u_{\pi_{k\ell}},u_{\sigma_{k\ell}})
    y_k x_\ell) = \frac{\lcm(u_{\pi_{ij}}x_iy_j,\lcm(u_{\pi_{k\ell}},
    u_{\sigma_{k\ell}}) y_k x_\ell)}{x_i y_j} y_i x_j.
  \end{equation}

  If both $x_i$ and $y_j$ do not divide the monomial $v=\lcm(u_{\pi_{k\ell}},
  u_{\sigma_{k\ell}}) y_k x_\ell$, then the $S$-polynomial reduce to $0$
  since it is a multiple of $v$. Therefore we may assume that $x_i$ or $y_j$
  divides $v$. Suppose that $x_i$ divides $v$. We claim that $y_j \nmid v$.
  Indeed, if $y_j | v$, then $j \leq k$. On the other hand, $i \geq \ell$
  because $x_i | v$. Therefore, $k < \ell \leq i < j$, a contradiction. Similarly one shows that if $y_j|v$, then $x_i \nmid v$.

  In the further discussion we assume that $x_i$ divides $v$. The case that
  $y_j | v$ can be treated similarly. Since $x_i | v$, it follows that
  $i \geq \ell$. Hence the $S$-polynomial \eqref{Spair1} can be rewritten as
  $\frac{\lcm(u_{\pi_{ij}},v)}{x_i} y_ix_j$. For simplicity we call this
  monomial $w$.

  First suppose that $i>\ell$, then
  $x_i | \lcm(u_{\pi_{k\ell}},u_{\sigma_{k\ell}})$ which implies that
  $i \in V(\pi_{k\ell}) \cup V(\sigma_{k\ell})$. Thus there is a path $\tau$
  from $\ell$ to $i$ which is part of $\pi_{k\ell} \cup \sigma_{k\ell}$.
  To simplify the notation we relabel its vertices as
  \[
    \tau: \ell=j_0,j_1,\dots,j_t=i.
  \]

  Set $j_{t(0)}=j_0$, $j_{t(1)}=\min\{j_h : j_h>\ell, h=1,\dots,t\}$ and
  $j_{t(2)}=\min\{j_h : j_h>\ell, h=t(1)+1,\dots,t\}$. Proceeding in this
  way we find a sequence of integers
  \[
    \ell = j_{t(0)} < j_{t(1)} < \cdots < j_{t(q)} = i
  \]
  and for each $1 \leq c \leq q$, the path
  $\tau_c: j_{t(c-1)},j_{t(c-1)+1},\dots,j_{t(c)}$ is admissible. Then
  \[
    S(u_{\pi_{ij}}b_{ij},\lcm(u_{\pi_{k\ell}},u_{\sigma_{k\ell}})
    y_k x_\ell) = (-1)^{t(q)-t(0)} \frac{w x_i y_\ell}{y_i x_\ell} +
    \sum_{c=1}^q (-1)^{t(q)-t(c)} v_{\tau_c} u_{\tau_c}
    h_{j_{t(c-1)}j_{t(c)}},
  \]
  where for every $c=1,\dots,q$,
  \[
    v_{\tau_c} = \frac{w x_i}{u_{\tau_c} y_i x_{j_{t(c)}} x_{j_{t(c-1)}}}.
  \]

  Notice that the first summand,
  $\displaystyle{\frac{w x_i y_\ell}{y_i x_\ell}}$, is a multiple of a
  monomial in $\mathcal G$ of the form (iii) by \ref{lemmapaths}. One
  checks that this is a standard expression for the given $S$-polynomial.

  Now suppose that $i=\ell$. If there is at least one vertex in
  $V(\pi_{k\ell}) \cup V(\sigma_{k\ell})$ that is bigger than $\ell$, then in
  light of \ref{lemmapaths} $w$ is a multiple of a monomial in
  $\mathcal G$ of the form (iii). If all the vertices in
  $V(\pi_{k\ell}) \cup V(\sigma_{k\ell})$ are less than or equal to $\ell$,
  then we label the vertices of $\pi_{ij}$ as $\pi_{ij}: i=i_0,i_1\dots,i_r=j$
  and choose $p = \min \{ h : i_h \geq j \}$. Thus
  $x_{i_p} | u_{\pi_{ij}}x_j$ and $w$ is a multiple of
  \[
    \lcm \left( \prod_{h \in W} y_h,
    \prod_{h=1}^{p-1} y_{i_h} \right) x_{i_p},
  \]
  that is a multiple of an element of $\mathcal G$ of the form (iv).

  \noindent {\sf Case III:} $S$-polynomials involving a monomial from (iv).

  It is enough to prove that $S(u_{\pi_{ij}}b_{ij},x_b \prod_{h \in W} y_h)$
  reduce to zero, where
  $W = V(\pi_{k\ell}) \cup V(\sigma_{k\ell}) \cup V(\tau_{ab}) \setminus \{b\}$
  as in the statement. Indeed, the corresponding $S$-polynomial in which we
  consider $g_{ij}$ instead of $b_{ij}$ differs only by sign. Moreover, the
  $S$-polynomial with a monomial of the form $y_b \prod_{h \in W} x_h$ can be
  treated similarly.

  As in the previous case, see \eqref{Spair1}, the only interesting cases
  are those in which $x_i$ or $y_j$ divides the monomial
  $u=x_b \prod_{h \in W} y_h$. Moreover, as before, one can show that only
  one of $x_i$ and $y_j$ can divide the monomial $u$. If $x_i$ divides $u$,
  then $i=b$. Hence the $S$-polynomial can be written as
  $\lcm(u_{\pi_{ij}},\prod_{h \in W} y_h) y_ix_j$. Suppose that
  $\pi_{ij}: i=i_0,i_1,\dots,i_r=j$ and define $p=\min \{ h : i_h \geq j\}$.
  It follows that $x_{i_p} | u_{\pi_{ij}}x_j$. Thus the $S$-polynomial
  is a multiple of the monomial
  \[
    \lcm \left( \prod_{h \in W} y_h, \prod_{h=0}^{p-1} y_{i_h} \right) x_{i_p},
  \]
  which is a multiple of an element of $\mathcal G$ of the form (iv). Hence
  the $S$-polynomial reduces to $0$.

  Suppose that $y_j$ divides $u$. Then the $S$-polynomial can be written as
  the monomial $w=\frac{\lcm(u_{\pi_{ij}},u)}{y_j} y_ix_j$. Let
  $m = \max \{h : h \in W \}$. First suppose that $j=m$. Set
  $H = \pi_{k\ell} \cup \sigma_{k\ell} \cup \tau_{ab}$. If $m$ belongs to
  some odd cycle $C$ of $H$, let $m' = \max \{h \in V(C) : h<m\}$ and call
  $\pi_{m'm}$ and $\sigma_{m'm}$ the two paths from $m'$ to $m$ that are
  contained in $C$. Notice that $\pi_{m'm}$ is odd and $\sigma_{m'm}$ is
  even or vice versa, and both are admissible. Hence they produce the monomial
  $\lcm(u_{\pi_{m'm}},u_{\sigma_{m'm}}) y_{m'} x_m$ of the form (iii) that
  divides $w$. On the other hand, if $m$ does not belong to any odd cycle of
  $H$, let $C$ be an odd cycle contained in $H$ such that
  \[
    d_H(m,C) = \min\{d_H(m,D) : D \text{ odd cycle of } H\},
  \]
  where $d_H(m,D)$ is the length of a shortest path connecting $m$ with a
  vertex of $D$. Call $\tau$ a shortest path in $H$ connecting $m$ to $C$.
  Let $p=\max \{h : h \in V(C)\}$, $p'=\max \{h \in V(C) : h < p\}$ and
  call $\pi_{p'p}$ and $\sigma_{p'p}$ the two paths from $p'$ to $p$
  contained in $C$. Notice that $\pi_{p'p}$ is odd and $\sigma_{p'p}$ is
  even or vice versa, and both are admissible. Hence $\pi_{p'p}$,
  $\sigma_{p'p}$ and $\tau$ produce the monomial
  $w' = x_m \prod_{h \in W'} y_h$, where
  $W' = V(\pi_{p'p}) \cup V(\sigma_{p'p}) \cup V(\tau) \setminus \{m\}$.
  The monomial $w'$ is of the form (iv) and divides $w$.

  Now suppose that $j<m$. We choose a path $\tau$ from $j$ to $m$ contained
  in $\pi_{k\ell} \cup \sigma_{k\ell} \cup \tau_{ab}$ and relabel its vertices
  as $\tau: j=j_0,j_1,\dots,j_t=m$. Set $j_{t(0)}=j_0$,
  $j_{t(1)}=\min \{j_h \in V(\tau): j_h>j, h=1,\dots,t \}$,
  $j_{t(2)}=\min \{ j_h \in V(\tau) : j_h>j, h=t(1)+1,\dots,t\}$. Proceeding
  in this way we define a sequence of integers
  $j=j_{t(0)} < j_{t(1)} < \cdots < j_{t(q)}=m$ and for each
  $1 \leq c \leq q$, the path
  $\tau_c: j_{t(c-1)},j_{t(c-1)+1},\dots,j_{t(c)}$ is admissible. Then
  \[
    S(u_{\pi_{ij}}b_{ij},x_b \prod_{h \in W} y_h) =
    \sum_{c=1}^q (-1)^{t(c-1)} v_{\tau_c} u_{\tau_c}
    h_{j_{t(c-1)}j_{t(c)}} + (-1)^{t(q)} \frac{w y_j x_m}{x_j y_m},
  \]
  where for every $c=1,\dots,q$, $v_{\tau_c} = \frac{w y_j}{u_{\tau_c} y_{j_{t(c-1)}} y_{j_{t(c)}} x_j}$.

  Notice, that the last summand, $\displaystyle{\frac{w y_j x_m}{x_j y_m} =
  \frac{\lcm (u_{\pi_{ij}},x_b \prod_{h \in W} y_h)}{y_m}} y_i x_m$. We
  call this monomial $v$. As we discussed before, in the case $j=m$, one can
  show that the monomial $v$ is divisible by a monomial of the form (iv). As
  before, one checks that this is a standard expression for the given
  $S$-polynomial.
\end{proof}

\begin{Corollary}
  \label{radical}
  Let  $G$ be a graph, and suppose $\chara(K)\neq 2$. Then $\Pi_{G}$
  is a radical ideal.
\end{Corollary}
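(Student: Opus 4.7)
The plan is to verify that the Gröbner basis $\mathcal{G}$ of $\Pi_G$ exhibited in \ref{Grobner} has the property that every element of $\mathcal{G}$ has a \emph{squarefree} leading monomial. Once this is established, $\ini_<(\Pi_G)$ is a squarefree monomial ideal, hence automatically radical, and the radicality of $\Pi_G$ follows from the standard Gröbner deformation principle: if $\ini_<(I)$ is a radical ideal, then $I$ itself is radical (this is a direct consequence of the flat one-parameter family linking $T/I$ to $T/\ini_<(I)$ together with the fact that radicality is preserved in such degenerations, using that the Hilbert functions of $T/I$ and $T/\ini_<(I)$ agree).

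The squarefreeness check proceeds case by case along the four families of generators in \ref{Grobner}. For families (i) and (ii), with $i<j$, the leading term under the lexicographic order $x_1>\cdots>x_n>y_1>\cdots>y_n$ of both $b_{ij}=x_iy_j+x_jy_i$ and $g_{ij}=x_iy_j-x_jy_i$ is $x_iy_j$. The monomial $u_{\pi_{ij}}=\prod_{i_k>j}x_{i_k}\prod_{i_k<i}y_{i_k}$ is squarefree because the vertices of an admissible path are pairwise distinct, and it involves only variables whose indices lie strictly outside $\{i,j\}$; thus $u_{\pi_{ij}}x_iy_j$ is squarefree. For family (iii), $\lcm(u_{\pi_{ij}},u_{\sigma_{ij}})$ is a least common multiple of squarefree monomials and is therefore squarefree, and by the same index argument neither $y_i$ nor $x_j$ appears in this lcm, so $\lcm(u_{\pi_{ij}},u_{\sigma_{ij}})y_ix_j$ is squarefree. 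The monomials in family (iv) are squarefree by construction, since all the vertices in $W\cup\{b\}$ are distinct.

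Consequently $\ini_<(\Pi_G)$ is a squarefree monomial ideal, hence a radical ideal, from which the radicality of $\Pi_G$ follows by the deformation principle mentioned above. There is no real obstacle in this argument: the structural work was carried out in \ref{Grobner}, and the remaining verification is a direct inspection of the shapes of the leading terms.
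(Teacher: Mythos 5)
Your proof is correct and takes essentially the same route as the paper: the paper likewise deduces radicality by observing that the Gröbner basis from \ref{Grobner} yields a squarefree initial ideal and then invoking the standard deformation result (there cited as \cite[Prop. 3.3.7]{HH}). You simply spell out the squarefreeness verification for each of the four families of basis elements, which the paper leaves implicit.
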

\begin{proof}
  From \ref{Grobner} it follows that there is a monomial order
  $<$ such that $\ini_<(\Pi_G)$ is a squarefree
  monomial ideal. This together with \cite[Prop. 3.3.7]{HH} implies that $\Pi_G$ is a radical ideal.
\end{proof}

Now \ref{radical-L_G} follows.

\begin{proof}[Proof of \ref{radical-L_G}]
  Assume first that $\sqrt{-1}\in K$.
  Then $L_G$ and $\Pi_G$ arise from each other by a linear change of
  coordinates. Hence $L_G$ is a radical ideal if and only $\Pi_G$ is.

  Now suppose that $\sqrt{-1}\notin K$. We choose a field extension $L/K$
  with $\sqrt{-1}\in L$. Then $L_G\otimes_K L\subset
  L[x_1,\ldots,x_n,y_1,\ldots,y_n]$ is generated by the
  same binomials as $L_G$, and hence by the first part of the proof it follows
  that $L_G\otimes_K L$ is a radical ideal. Suppose $L_G$ is not radical.
  Then there exists $f\notin L_G$ such that $f^k\in L_G$ for  some $k$. It
  follows that $f^k\in L_G\otimes_K L$. It remains to show that
  $f\notin L_G\otimes_K L$. Suppose this is not the case. Let
  $\mu_f:T/L_G\rightarrow T/L_G$ be the $T/L_G$-module homomorphism induced
  by multiplication with $f$. Then $\Im(\mu_f\otimes_K L)=0$, because
  $f\in L_G\otimes_K L$. Since $L$ is a flat $K$-module, it follows that
  $\Im(\mu_f\otimes_K L)=(\Im \mu_f)\otimes_K L$, and hence
  $(\Im \mu_f)\otimes_K L=0$. Since $L$ is even faithfully flat over $K$,
  we conclude that $\Im \mu_f=0$. This implies that $f=0$, a contradiction.
\end{proof}

We note that in general for $L_G$ there is no term order such that $L_G$ has
a squarefree initial ideal with respect to this term order. For example, by using the routine \texttt{gfan} of \texttt{Macaulay2} \cite{Mac2}, it can be seen that $L_{C_3}$ has no squarefree Gr\"obner basis with respect to any term order.
\\[3mm] \indent Finally, we provide the proof of \ref{char2}.

\begin{proof}[Proof of \ref{char2}]
  If $G$ is bipartite then by $\sqrt{-1} \in K$ and \ref{binomial edge ideal} the
  radicality of $L_G$ is implied by the radicality of binomial
  edge ideals.

  It remains to show that if $G$ is not bipartite then $L_G$
  is not radical.
  It is enough to show that $L_G T_Y$ is not radical, since
 $Y$ contains only non-zero divisor modulo $L_G$ by \ref{nonzerodivisor}.
  Recall that via the change of variables
  $x_i \mapsto z_i = \frac{x_i}{y_i}$ for $i =1,\ldots, n$ we identify
  $T_Y$ with $K[z_1,\ldots, z_n,y_1^{\pm 1},\ldots, y_n^{\pm 1}]$.
  Thus ideal $L_GT_Y$ generated by the elements $z_iz_j+1$ for $\{i,j\}
  \in  E(G)$ in $T_Y$.
  We further transform $z_i \mapsto w_i:=1+z_i$ for $i=1,\ldots,n$.
  Then $L_GT_Y$ is generated by the elements $w_i+w_j+w_iw_j$ for $\{i,j\}\in  E(G)$ in
  $T_Y = K[w_1,\ldots,w_n,y_1^{\pm 1},\ldots,y_n^{\pm 1}]$.
  Since $G$ is non-bipartite, there exists a
  subgraph of $G$ which is an odd cycle, say $C_{m}$. We may assume that
  $V(C_m)=[m]$. Note that
  \begin{eqnarray}
    \label{cyclegenerators}
    \sum_{i=1}^{m-1}(w_i+w_{i+1}+w_iw_{i+1})+(w_1+w_m+w_1w_m)=\sum_{i=1}^{m-1}w_iw_{i+1}+w_1w_m,
  \end{eqnarray}
  since $\chara(K)=2$ and each $w_i$ appears twice in the sum on the left-hand side of the equation.
  It follows that $\sum_{i=1}^{m-1}w_iw_{i+1}+w_1w_m\in L_GT_Y$. We also
  have $w_{2i-1}w_{2i}+w_{2i}w_{2i+1}\in L_GT_Y$ for all
  $i=1,\ldots,(m-1)/2$, because
  \begin{eqnarray}
    \label{twosums}
    w_{2i-1}w_{2i}+w_{2i}w_{2i+1}&=&w_{2i+1}(w_{2i-1}+w_{2i}+w_{2i}w_{2i-1})\\
    \nonumber
    &+&w_{2i-1}(w_{2i}+w_{2i+1}+w_{2i}w_{2i+1}).
  \end{eqnarray}
  From \eqref{cyclegenerators} and \eqref{twosums} we deduce that
  $w_1w_m\in L_GT_Y$. By symmetry we also have
  $w_iw_{i+1}\in L_GT_Y$ for $i=1,\ldots,m-1$. This implies that
  $w_1+w_{m}\in L_GT_Y$ and $w_i+w_{i+1}\in L_GT_Y$ for $i=1,\ldots, m-1$.
  Hence $w_{i+1}^2\in L_GT_Y$ for $i=1,\ldots,m-1$, because
  $w_{i+1}^2=w_{i+1}(w_i+w_{i+1})+w_iw_{i+1}$. Similarly, $w_1^2\in L_GT_Y$.
  In order to conclude the proof of the theorem, we show that  $w_i\notin L_GT_Y$
  for all $i=1,\ldots, m$. Let $F$ be the quotient
  field of $K[y_1^{\pm 1},\ldots, y_n^{\pm 1}]$ and let $A=F[w_1,\ldots,w_n]/(w_{m+1},\ldots,w_n) \cong
  F[w_1,\ldots,w_m]$. It is enough
  to show that $w_i\notin L_GA$ for all $i=1,\ldots, m$. The above calculation have shown that $L_GA$
  is a graded ideal generated by the linear forms $w_1+w_{m}$ and $w_i+w_{i+1}$ for $i=1,\ldots,m-1$,
  and by the monomials $w_1w_m$ and  $w_iw_{i+1}$ for
  $i=1,\ldots, m-1$. Since $w_1+w_m=\sum_{i=1}^{m-1}(w_i+w_{i+1})$ we see that
  $\dim_F(L_GA)_1\leq m-1$, and hence not all
  $w_i$ belong to  $L_GA$. Say $w_1\not\in L_GA$.
  Since $w_i+w_{i+1}\in L_GA$ for $i=1,\ldots,m$ it then follows that
  $w_i\not\in L_GA$ for $i=1,\ldots,m$.
\end{proof}

\section[A primary decomposition of the ideal $L_G$]{A primary decomposition of the ideal $L_G$ for $\sqrt{-1}\notin K$}
\label{section2}

In this section, we give a primary decomposition of $L_G$ when
$\sqrt{-1}\notin K$. Let $H$ be  an arbitrary connected finite graph on
some vertex set. If $H$ is not bipartite, then we denote by
$\widetilde{H}$ the complete graph on the ground set of $H$. If $H$ is
bipartite, then the fact that $H$ is connected implies that the bipartition
of the ground set is uniquely defined. In this situation we denote by
$\widetilde{H}$ the complete graph on the ground set of $H$ with respect
to this bipartition.
Let $G$ be  a finite graph on the vertex set $[n]$. For
$S\subset [n]$ we set
\[
  Q_S(G)=(\{x_i,y_i\}_{i\in S}, I_{\widetilde{G}_1},\ldots, I_{\widetilde{G}_{c(S)}}),
\]
where $G_1,\ldots, G_{c(S)}$ are the connected components of
$G_{[n]\setminus S}$.
Notice that if $G$ is a connected bipartite graph,
then $Q_{\emptyset}(G)=I_{K_{m,n-m}}$ for some $m$,
and if $G$ is a connected non-bipartite graph, then $Q_{\emptyset}(G)=I_{K_n}$.
Moreover, note that if $G$ is bipartite, then by
\ref{binomial edge ideal}, it can be easily seen that the ideals
$Q_T(G)$ are isomorphic to the ideals $P_T(G)$ introduced in \cite{HHHKR}.
In particular, when $G$ is bipartite, $Q_{\emptyset}(G)$ is a determinantal
ideal. We denote the number of bipartite connected components of
$G_{[n]\setminus S}$ by $b(S)$. Here we consider $K_1$ as a bipartite graph.

\begin{Proposition}
  \label{height}
  Let $G$ be a graph on $[n]$ and let $S\subset [n]$. Then
  $\height Q_S(G)=|S|+n-b(S)$.
\end{Proposition}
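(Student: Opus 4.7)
The plan is to compute $\dim T/Q_S(G)$ directly using the fact that the summands generating $Q_S(G)$ involve pairwise disjoint sets of variables, and then appeal to the height formulas for $I_{K_n}$ and $I_{K_{m,n-m}}$ established in \ref{more}.

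First I would note that the variables appearing in $(\{x_i,y_i\}_{i \in S})$ and in each ideal $I_{\widetilde{G}_j}$ involve disjoint index sets, because $S$ and the vertex sets $V(G_1),\dots,V(G_{c(S)})$ partition $[n]$. Consequently the quotient factors as a tensor product over $K$:
\[
T/Q_S(G) \;\iso\; \bigl(K[x_i,y_i\:\; i\in S]/(x_i,y_i\:\; i\in S)\bigr)\tensor_K \bigotimes_{j=1}^{c(S)} K[x_i,y_i\:\; i\in V(G_j)]/I_{\widetilde{G}_j}.
\]
Krull dimensions are additive on tensor products of finitely generated $K$-algebras, so it suffices to compute the dimension of each factor.

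Next, using \ref{more}, I would record that $\height I_{K_{n_j}}=n_j$ and $\height I_{K_{m_j,n_j-m_j}}=n_j-1$, where $n_j=|V(G_j)|$; the edge case $n_j=1$ is handled by the convention $I_{K_1}=(0)$, which indeed has height $0=n_j-1$ for a (vacuously bipartite) isolated vertex. Since the ambient polynomial ring for $G_j$ has $2n_j$ variables, the corresponding factor contributes dimension $n_j$ when $G_j$ is non-bipartite and $n_j+1$ when $G_j$ is bipartite. Summing yields
\[
\dim T/Q_S(G) \;=\; \sum_{j=1}^{c(S)} n_j \;+\; b(S) \;=\; (n-|S|)+b(S),
\]
and hence $\height Q_S(G) = 2n - \dim T/Q_S(G) = |S|+n-b(S)$.

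There is no real obstacle here: the argument is essentially bookkeeping, provided one is careful that $\widetilde{G}_j$ for a bipartite component uses the intrinsic bipartition of $G_j$ (uniquely determined since $G_j$ is connected) so that the identification with $I_{K_{m_j,n_j-m_j}}$ is unambiguous, and provided one treats the isolated-vertex case consistently with $I_{K_1}=(0)$.
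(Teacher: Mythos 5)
Your proposal is correct and follows essentially the same route as the paper: both rest on the observation that $(\{x_i,y_i\}_{i\in S})$ and the $I_{\widetilde{G}_j}$ involve pairwise disjoint variables, together with the height formulas $\height I_{K_{n_j}}=n_j$ and $\height I_{K_{m_j,n_j-m_j}}=n_j-1$ from \ref{more}. The only cosmetic difference is that you pass through $\dim T/Q_S(G)$ via additivity of Krull dimension on tensor products and then subtract from $2n$, whereas the paper simply adds the heights of the summands directly; these are the same computation, and your attention to the $I_{K_1}=(0)$ convention for isolated (vacuously bipartite) components is exactly right.
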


\begin{proof}
%Since the ideals $(x_i,y_i:i\in S)$ and $I_{\widetilde{G}_j}$ for all $j=1,\ldots,c(S)$ are on pairwise disjoint sets of variables, and by \ref{primeness-K_{m,n-m}} and \ref{primeness-K_{n}}, all of these ideals are unmixed, it follows that $Q_S(G)$ is also unmixed.
  We may assume that $G_1,\ldots,G_{b(S)}$ are the bipartite connected
  components of $G$ and $G_{b(S)+1},\ldots,G_{c(S)}$ are the non-bipartite
  connected components of $G$. Let $n_j=|V(G_j)|$ for all $j=1,\ldots,c(S)$.
  Since the ideals $(x_i,y_i:i\in S)$ and $I_{\widetilde{G}_j}$ for
  $j=1,\ldots,c(S)$ are on pairwise disjoint sets of variables, it follows
  together with \ref{more} that
  \begin{eqnarray}
    \height Q_S(G) &=& \height (x_i,y_i:i\in S)+
      \sum_{j=1}^{b(S)}\height (I_{\widetilde{G}_j})+
      \sum_{j=b(S)+1}^{c(S)}\height (I_{\widetilde{G}_j})
      \nonumber \\
                &=& 2|S|+ \sum_{j=1}^{b(S)}(n_j-1)+
      \sum_{j=b(S)+1}^{c(S)} n_j
      \nonumber \\
                &=& |S|+(|S|+\sum_{j=1}^{b(S)}n_j+
      \sum_{j=b(S)+1}^{c(S)} n_j)-b(S)
      \nonumber \\
                &=& |S|+n-b(S),
      \nonumber
  \end{eqnarray}
  as desired.
\end{proof}

\begin{Proposition}
  \label{difficult}
  Let $\sqrt{-1}\notin K$, and let
  $I=\sum_{i}I_{K_{m_i,n_i-m_i}}+\sum_{j}I_{K_{t_j}}+U$,
  where $U$ is generated by variables. Suppose the ideals
  $I_{K_{m_i,n_i-m_i}}$, $I_{K_{t_j}}$ and $U$
  are defined on pairwise disjoint sets of variables.
  Then $I$ is a prime ideal.
  In particular, $Q_S(G)$ is a prime ideal for all $S\subset [n]$.
\end{Proposition}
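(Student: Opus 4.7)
My plan is to reduce primeness of $I$ to the primeness of each individual summand---established in \ref{primeness-K_{m,n-m}} and \ref{primeness-K_{n}}---by localizing at the $y$-variables, generalizing the tactic used there. First I would reduce to the case $U = 0$: since $U$ is generated by variables on a set disjoint from those of the other summands, modding out by $U$ identifies $T/U$ with the polynomial ring on the remaining variables, and the image of $I$ retains the sum decomposition. Next I would let $Y$ be the multiplicative set generated by the product of all $y$-variables appearing in $I$. By \ref{nonzerodivisor}, each such $y_\alpha$ is a nonzero divisor modulo its home summand, and the pairwise disjoint variable structure extends this to show the product is a nonzero divisor modulo $I$. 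Hence $I = I T_Y \cap T$, so it suffices to prove $I T_Y$ is prime in $T_Y$.

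To analyze $I T_Y$, I would use the substitution $z_\alpha = x_\alpha/y_\alpha$ from the proofs of \ref{primeness-K_{m,n-m}} and \ref{primeness-K_{n}}(a), under which each summand becomes transparent. Modulo its internal linear forms $z_a - z_b$ (one per bipartition class), an $I_{K_{m_i, n_i-m_i}} T_Y$ block reduces to a single relation $z_p z_q + 1 = 0$, whose block quotient is a Laurent polynomial ring of the form $K[z^{\pm 1}, y_\alpha^{\pm 1}, \ldots]$; an $I_{K_{t_j}} T_Y$ block reduces, modulo its linear forms, to a single relation $z^2 + 1 = 0$ and, because $\sqrt{-1} \notin K$, yields a block quotient isomorphic to the polynomial ring $K(\sqrt{-1})[y_\alpha^{\pm 1}, \ldots]$ over the field $K(\sqrt{-1})$.

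Finally, the pairwise disjoint variable hypothesis means $T_Y/I T_Y$ factors as the tensor product over $K$ of these block quotients. The bipartite pieces are Laurent polynomial rings over $K$ and are absolutely integral, so tensoring them together---along with any residual $y$-Laurent factors---gives a Laurent polynomial ring over $K$, a domain. Extending by a non-bipartite piece simply enlarges the base to $K(\sqrt{-1})$ while preserving the Laurent polynomial structure, so the result remains a domain. The main obstacle will be the combined contribution of several $I_{K_{t_j}}$ pieces, since $K(\sqrt{-1}) \otimes_K K(\sqrt{-1})$ is not a domain in general; here one must exploit the explicit embedding of the $K(\sqrt{-1})$-factors inside the ambient Laurent polynomial ring to verify that no zero divisors arise in the combined quotient. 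Once this is done, contracting $I T_Y$ back to $T$ yields primeness of $I$, and the ``in particular'' assertion for $Q_S(G)$ follows by identifying its summands with the building blocks just analyzed.
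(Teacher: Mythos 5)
You correctly identify the real difficulty but then decline to resolve it: the phrases ``one must exploit the explicit embedding of the $K(\sqrt{-1})$-factors'' and ``once this is done'' are a gap, not an argument, and they sit exactly where the proof must succeed or fail. In fact the obstruction you spot cannot be circumvented. Your reduction is sound up to and including the tensor decomposition of $T_Y/I T_Y$, and you are right that each non-bipartite block contributes a factor isomorphic to $K(\sqrt{-1})$ over the Laurent variables. But with two such factors the ring $K(\sqrt{-1})\otimes_K K(\sqrt{-1}) \cong K(\sqrt{-1})\times K(\sqrt{-1})$ genuinely is not a domain, and the resulting zero divisors are already present before localization. Concretely, take $I = I_{K_3}+I_{K_3'}$ on the disjoint variable sets $\{x_1,y_1,x_2,y_2,x_3,y_3\}$ and $\{x_4,y_4,x_5,y_5,x_6,y_6\}$. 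Then
\[
  (x_1 y_4 - y_1 x_4)(x_1 y_4 + y_1 x_4) \;=\; y_4^2\,(x_1^2+y_1^2) - y_1^2\,(x_4^2+y_4^2) \;\in\; I,
\]
yet neither factor lies in $I$: the degree-two part of $I$ is the $K$-span of the standard generators, each of which involves variables from only one block, so the mixed monomial $x_1y_4$ cannot occur. (In your localized picture these two factors are precisely $z_1 \mp z_4$ after clearing denominators.) Hence $I$ is not prime, and the statement as written fails whenever two or more $I_{K_{t_j}}$-summands occur.

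With at most one non-bipartite summand your argument does close: the tensor product of the block quotients is then a Laurent polynomial ring over $K$ or over $K(\sqrt{-1})$, hence a domain, and contracting back from $T_Y$ to $T$ via the non-zero-divisor system recovers primeness of $I$. It is also worth flagging that the paper's own proof stumbles at exactly the same spot: its inductive claim requires $B[z,y_1^{\pm 1},\ldots]/(z^2+1)$ to be a domain for every domain $B$ with $\sqrt{-1}\notin B$, and this is asserted to be ``obviously the case.'' It is not: the assertion fails as soon as $\sqrt{-1}$ lies in the quotient field of $B$, which happens for $B = T/I_{K_n}$ because $(x_1/y_1)^2 = -1$ there. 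Consequently the ``in particular'' about $Q_S(G)$ is safe only when at most one connected component of $G_{[n]\setminus S}$ is non-bipartite.
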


\begin{proof}
  Let $I\subset R$,  where $R$ is the polynomial ring over $K$ in the
  variables which are needed to define $I$.
  Then $R'=R/U$ is a polynomial ring in the remaining variables of $R$
  which do not belong to $U$, and $I/U\subset R'$
  may be identified with the ideal
  $J=\sum_{i}I_{K_{m_i,n_i-m_i}}+\sum_{j}I_{K_{t_j}}\subset R'$.
  Since $R/I\iso R'/J$ it suffices to prove that $J$ is a prime ideal of
  $R'$. But this will follow from the
  following more general fact:

  \noindent {\sf Claim:} For $j=1,\ldots, m$ let $I_j$ be an ideal in the
  polynomial ring
    \[
      K[x_{11},\ldots,x_{1n_1},x_{21},\ldots,x_{2n_2},\ldots,x_{m1},\ldots,x_{mn_m} ]
    \]
    satisfying the following properties:
    \begin{enumerate}
      \item[(i)] the set of generators $\MG_j$ of  $I_j$ is a subset of $K[x_{j1},\ldots,x_{jn_j}]$;
      \item[(ii)] for all $j$ the coefficients of the elements of $\MG_j$ are $+1$ or $-1$;
      \item[(iii)] for any domain $B$ with $\sqrt{-1}\not\in B$ the ring $B[x_{j1},\ldots,x_{jn_j}]/({\MG}_j)B[x_{j1},\ldots,x_{jn_j}]$
	    is a domain and $\sqrt{-1}\not\in B[x_{j1},\ldots,x_{jn_j}]/(\MG_j)B[x_{j1},\ldots,x_{jn_j}]$.
    \end{enumerate}
    Then $I_1+\cdots +I_m$ is a prime ideal.
  \medskip

  Before proving the claim let us use this fact to show that $J$ is a prime ideal. In our particular case the ideals
  $I_j$ are the ideals $I_{K_{m_i,n_i-m_i}}$ and $I_{K_{t_j}}$. Let $\MH_i$ be the set of generators of
  $I_{K_{m_i,n_i-m_i}}$  as in \eqref{othergenerators} and \eqref{linearforms} and $\MG_j$ be the set of generators of
  $I_{K_{t_j}}$ as in \eqref{alsolinear} and \eqref{squaregenerators}. Clearly the conditions (i) and (ii)  are satisfied.
  Let $B$ be a domain with $\sqrt{-1}\not\in B$. We first show that $B[x_{j1},\ldots,x_{jn_j}]/(\MH_j)B[x_{j1},\ldots,x_{jn_j}]$
  and $B[x_{j1},\ldots,x_{jn_j}]/(\MG_j)B[x_{j1},\ldots,x_{jn_j}]$  are domains. As  in the proofs of \ref{primeness-K_{m,n-m}}
  and \ref{primeness-K_{n}} where it was shown that $I_{K_{m,n-m}}$ and $I_{K_n}$ are prime ideals, we need to show  that
  $z_1z_{m+1}+1$ generates a prime ideal in $B[z_1,z_{m+1}, y_1^{\pm 1}, \ldots, y_n^{\pm 1}]$, and that $z_1^2+1$ generates a
  prime ideal in  $B[z_1, y_1^{\pm 1}, \ldots, y_n^{\pm 1}]$. But this is obviously the case.
  Suppose $\sqrt{-1}\in B[x_{j1},\ldots,x_{jn_j}]/(\MH_j)B[x_{j1},\ldots,x_{jn_j}]$. Then there exists $f\in B[x_{j1},\ldots,x_{jn_j}]$
  such that $f^2+1\in J_j:=(\MH_j)B[x_{j1},\ldots,x_{jn_j}]$. Since $J_j$ is a graded ideal, all homogeneous components
  of $f^2+1$ belong to $J_j$. Therefore, if  $b$ is the constant term of $f$, then $b^2+1\in J_j$, which is only
  possible if $b^2+1=0$. However since $\sqrt{-1}\not\in B$, we obtain a contradiction.
  \medskip

  \noindent {\sf Proof of Claim:} We proceed by induction on $m$. The
     assertion is trivial for $m=1$.
     Let $B=K[x_{11},\ldots,x_{1n_1},x_{21},\ldots,x_{2n_2},\ldots,x_{(m-1)1},
     \ldots,x_{(m-1)n_{m-1}} ]/(\MG_1,\ldots, \MG_{m-1})$.
     Then by our induction $B$ is a domain and $\sqrt{-1}\not\in B$.
     Moreover, we have that
     $R/(I_1+\cdots+I_m)\iso B[x_{m1},\ldots,x_{jm_j}]/(\MG_m)$,
     and hence (iii) implies that $I_1+\cdots+I_m$ is a prime ideal.
\end{proof}

The main result of this section is the following theorem that provides a primary decomposition of $L_G$
when $\sqrt{-1}\notin K$ which however in most cases is redundant.

\begin{Theorem}
  \label{primarynew}
  Let $G$ be a graph on $[n]$. Suppose that $\sqrt{-1}\notin K$, and $P$ is a minimal prime ideal of $L_G$.
  Then $P=Q_S(G)$ for some $S\subset [n]$. In particular, $L_G=\Sect_{S\subset [n]}Q_S(G)$.
\end{Theorem}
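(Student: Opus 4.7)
The plan is to prove the stronger first assertion (every minimal prime $P$ of $L_G$ has the form $Q_S(G)$); the intersection formula $L_G=\bigcap_S Q_S(G)$ then follows by intersecting over all minimal primes and using that $L_G\subseteq Q_S(G)$ already holds for every $S\subseteq[n]$. That inclusion is verified on each generator $f_{ij}$: either an endpoint of $\{i,j\}$ lies in $S$, so $f_{ij}\in(x_i,y_i)+(x_j,y_j)\subseteq Q_S(G)$; or both endpoints lie in the same connected component $G_\ell$ of $G_{[n]\setminus S}$, and then $\{i,j\}\in E(G_\ell)$ is either an edge of a non-bipartite $G_\ell$ or an edge between the two blocks of a bipartite $G_\ell$, so $f_{ij}$ is a standard generator of $I_{\widetilde{G_\ell}}$. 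Since $\sqrt{-1}\notin K$ forces $\chara K\neq 2$, \ref{radical-L_G} implies $L_G$ is radical, hence $L_G=\bigcap_{P\text{ minimal}} P$; and by \ref{difficult} each $Q_S(G)$ is prime.

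Now fix a minimal prime $P$ and set $S:=S(P)=\{i:\{x_i,y_i\}\subseteq P\}$. Since $Q_S(G)$ is prime containing $L_G$ and $P$ is minimal, it suffices to show $Q_S(G)\subseteq P$; the variable piece $(x_i,y_i:i\in S)$ is in $P$ by definition of $S$, so the task reduces to proving $I_{\widetilde{G_\ell}}\subseteq P$ for every connected component $G_\ell$ of $G_{[n]\setminus S}$. The identity $L_G+(x_i,y_i:i\in S)=L_{G_{[n]\setminus S}}+(x_i,y_i:i\in S)$ lets me mod out by these variables and reduce to the case $S=\emptyset$; thus I may assume that for every vertex $i$ at least one of $x_i,y_i$ lies outside $P$.

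The heart of the proof is to upgrade this to: both $x_i$ and $y_i$ lie outside $P$ for every $i$. Suppose to the contrary that $y_i\in P$ (so $x_i\notin P$), and let $G_\ell$ be the component of $i$. For any edge $\{i,j\}\in E(G_\ell)$, $f_{ij}\in P$ together with $y_i\in P$ gives $x_ix_j\in P$, whence $x_j\in P$ and $y_j\notin P$; iterating yields a partition $V(G_\ell)=A\sqcup B$ with $y_v\in P$ for $v\in A$ and $x_v\in P$ for $v\in B$, alternating along each edge. If $G_\ell$ is non-bipartite, any odd closed walk forces some vertex into $A\cap B$, putting $\{x_v,y_v\}\subseteq P$ and contradicting $v\notin S$. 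If $G_\ell$ is bipartite, then $\{A,B\}$ matches its bipartition, and the ideal $U_\ell:=(y_v)_{v\in A}+(x_v)_{v\in B}$ is a prime strictly containing $I_{\widetilde{G_\ell}}$ and lying inside $P$; combining $U_\ell$ on disjoint variables with $\sum_{\ell'\neq\ell}I_{\widetilde{G_{\ell'}}}$ produces, by \ref{difficult}, a prime $Q'$ with $L_G\subseteq Q_\emptyset(G)\subsetneq Q'\subseteq P$. By minimality of $P$ both $Q_\emptyset(G)$ and $Q'$ would have to equal $P$, contradicting the strict containment $Q_\emptyset(G)\subsetneq Q'$.

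With $x_i,y_i\notin P$ for all $i$, localize at $Y:=\prod_i y_i$ and set $z_i:=x_i/y_i$; then $L_GT_Y$ is generated by the $z_iz_j+1$ for $\{i,j\}\in E(G)$. Compositions of two consecutive edges produce the relations $z_u-z_v$ for vertices in the same bipartition block of a component, giving the $g$-generators, and in each non-bipartite component an odd closed walk yields $z_v^2+1=0$, giving the $h$-generators; hence $L_GT_Y=Q_\emptyset(G)T_Y$. Since by \ref{nonzerodivisor} the $y_i$ are non-zerodivisors modulo $Q_\emptyset(G)$, contracting back to $T$ gives $Q_\emptyset(G)\subseteq P$, completing the proof. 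The delicate step is the bipartite sub-case of the propagation: the naive argument only produces the too-large prime $U_\ell$, and one must assemble $U_\ell$ with the other components' primes via \ref{difficult} and compare against $Q_\emptyset(G)$ to extract a contradiction from the minimality of $P$.
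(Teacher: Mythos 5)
Your proof uses the same core ingredients as the paper's---the edge-propagation argument (paper's \ref{pair}), the localization at $Y=\prod_i y_i$ and contraction (paper's \ref{novariable}), and the primeness of the $Q_S(G)$ from \ref{difficult}---but reorganizes them: instead of the paper's induction on $|V(G)|$, you set $S(P)=\{i:x_i,y_i\in P\}$, reduce to $S=\emptyset$, try to show no single variable lies in $P$, and then localize. The inclusion $L_G\subseteq Q_S(G)$, the reduction to $S=\emptyset$, the propagation within a component, the non-bipartite contradiction, and the final localize-and-contract step are all correct.

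The gap is in the bipartite sub-case of the ``no variable'' step when the graph $G_{[n]\setminus S}$ is disconnected. You form $Q' := U_\ell + \sum_{\ell'\neq\ell} I_{\widetilde{G}_{\ell'}}$ and assert $Q'\subseteq P$, but that containment requires $I_{\widetilde{G}_{\ell'}}\subseteq P$ for every other component $\ell'$---which is precisely part of the conclusion $Q_\emptyset(G)\subseteq P$ you have not yet established. Propagation gives you $U_\ell\subseteq P$, but it tells you nothing about how $P$ meets the variables of the components $\ell'\neq\ell$. The paper avoids this by induction on $|V(G)|$: for disconnected graphs it chooses, for each component $G_i$, a minimal prime $P_i$ of $L_{G_i}$ with $P_i\subseteq P$ and applies the inductive hypothesis to $P_i$, while for connected graphs \ref{pair} produces a pair $\{x_k,y_k\}\subseteq P$ and one reduces modulo $(x_k,y_k)$. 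To repair your version, treat each component $\ell'$ on its own first---propagation when $P$ contains some variable of $V(G_{\ell'})$, localization at the $y_v$ with $v\in V(G_{\ell'})$ when it does not---to obtain $I_{\widetilde{G}_{\ell'}}\subseteq P$ for every $\ell'$; only then do you have $Q_\emptyset(G)\subseteq P$, and minimality forces $P=Q_\emptyset(G)$ (equivalently $P=Q_S(G)$ in the original ring).
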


To prove \ref{primarynew}, we need the following lemmata.

\begin{Lemma}
  \label{pair}
  Let $G$ be a connected graph on $[n]$, and let $P$ be a minimal prime ideal of $L_G$ containing
  a variable. Then there exists $k\in [n]$ such that $x_k,y_k\in P$.
\end{Lemma}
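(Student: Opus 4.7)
The plan is to argue by contradiction, assuming no index $k$ has both $x_k$ and $y_k$ in $P$, and to derive a contradiction in two cases depending on whether $G$ is bipartite. By the involution $\sigma$ of $T$ swapping $x_v$ with $y_v$ for every $v$, which stabilizes $L_G$ (since each generator $x_vx_w+y_vy_w$ is $\sigma$-invariant), $\sigma$ permutes the minimal primes of $L_G$, so I may assume without loss of generality that $P$ contains $x_i$ for some $i \in [n]$. Set $A = \{v : x_v \in P\}$ and $B = \{v : y_v \in P\}$, and suppose for contradiction that $A \cap B = \emptyset$.

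Next I will propagate membership along edges. For any edge $\{v,w\}\in E(G)$ with $v\in A$, the relation $x_vx_w + y_vy_w \in L_G \subseteq P$ combined with $x_v\in P$ gives $y_vy_w \in P$; since $y_v\notin P$ by the assumption $A\cap B = \emptyset$, primality of $P$ forces $y_w\in P$, so $w\in B$. Symmetrically, neighbors of $B$-vertices lie in $A$. Because $G$ is connected and $i\in A$, this shows $V(G) = A \cup B$ and every edge crosses the partition, so $(A,B)$ is a bipartition of $G$. If $G$ contains an odd cycle, this is already a contradiction, so it remains to handle the case where $G$ is bipartite with bipartition $(A,B)$.

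The key idea for the bipartite subcase is to exhibit a prime ideal strictly between $L_G$ and $P$. I take $I := I_{K_{|A|,|B|}}$, the ideal from Section~\ref{section1} attached to the complete bipartite graph on $[n]$ with parts $A$ and $B$; note $|A|,|B|\geq 1$ since $G$ is connected with $n\geq 2$. By \ref{primeness-K_{m,n-m}} the ideal $I$ is prime. Since every edge of $G$ connects $A$ to $B$, each standard generator $x_vx_w+y_vy_w$ of $L_G$ is an $f$-generator of $I$, so $L_G\subseteq I$. Every generator of $I$ lies in $P$ as well: the $f$-generators $x_vx_w+y_vy_w$ for $v\in A, w\in B$ are in $P$ because $x_v\in P$, while the $g$-generators $x_vy_w-x_wy_v$ with $v,w\in A$ (resp.\ $v,w\in B$) are in $P$ because $x_v,x_w\in P$ (resp.\ $y_v,y_w\in P$). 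Hence $I\subseteq P$, and moreover $I\subsetneq P$ because $x_i\in P$ while $x_i$ is a non-zero divisor modulo $I$ by \ref{nonzerodivisor}. This contradicts the minimality of $P$ over $L_G$.

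The main obstacle is precisely the bipartite subcase: the neighbor propagation alone is consistent with $A\cap B = \emptyset$, so one must explicitly produce an intermediate prime. Recognizing $I_{K_{|A|,|B|}}$ as that intermediate prime, and invoking its primality and the non-zero-divisor property of the variables from Section~\ref{section1}, is the substantive content of the argument.
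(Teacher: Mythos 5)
Your proof is correct and takes essentially the same approach as the paper's: propagate variable-membership in $P$ along edges via primality (which yields a proper $2$-colouring of $G$, hence an immediate contradiction when $G$ is non-bipartite), and in the bipartite case produce $I_{K_{m,n-m}}$ with respect to the bipartition $(A,B)$ as a prime, via \ref{primeness-K_{m,n-m}}, sitting strictly between $L_G$ and $P$, contradicting minimality. The only cosmetic difference is that the paper obtains the strict containment by first showing the larger linear ideal $(x_v : v\in A,\ y_w : w\in B)$ lies in $P$ and that $I_{K_{m,n-m}}$ sits strictly inside it, whereas you invoke the non-zero-divisor property from \ref{nonzerodivisor} directly; also, when verifying $f_{vw}\in P$ for $v\in A$, $w\in B$ you should cite both $x_v\in P$ and $y_w\in P$, not just $x_v\in P$.
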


\begin{proof}
  If $G=K_2$, then $L_G$ is a prime ideal, and hence $P=L_G$. Since $L_G$ does not contain any variable,
  there is nothing to prove in this case. Now suppose that $G\neq  K_2$ and that $x_i\in P$.
  Let us first assume that $G$ is a bipartite graph on the vertex set $[n]$ with the bipartition $\{1,\ldots,m\}\cup \{m+1,\ldots,n\}$.
  Suppose on the contrary that there exists no $k\in [n]$ such that $x_k,y_k\in P$. We claim that $(x_1,\ldots,x_m,y_{m+1},\ldots,y_n)\subset P$.
  Given $j\in [m]$, there exists a path $i=i_0,i_1,\ldots,i_{2\ell}=j$. Here we used the fact that $G$ is
  connected. We show by induction on $\ell$ that $x_j\in P$.
  Suppose that $\ell=1$. Since $x_{i_0}x_{i_1}+y_{i_0}y_{i_1}\in P$ and $x_{i_0}\in P$ but $y_{i_0}\notin P$, it follows that $y_{i_1}\in P$. Similarly, since
  $x_{i_1}x_{i_2}+y_{i_1}y_{i_2}\in P$ and $y_{i_1}\in P$ but $x_{i_1}\notin P$, it follows that $x_{i_2}\in P$. Since $i_2,\ldots,i_{2\ell}=j$ is a path of length
  $2(\ell-1)$ and $x_{i_2}\in P$, by induction hypothesis it follows that $x_j\in P$. By a similar argument for any $j\in \{m+1,\ldots,n\}$, we  have $y_j\in P$.
  Hence we have
  \[
    L_G\subset I_{K_{m,n-m}}\subsetneq (x_1,\ldots,x_m,y_{m+1},\ldots,y_n)\subset P,
  \]
  which contradicts the assumption that $P$ is a minimal prime ideal of $L_G$ because $I_{K_{m,n-m}}$ is a prime ideal,
  see \ref{primeness-K_{m,n-m}}. Therefore, it follows that $x_k,y_k\in P$ for some $k$.
  Next assume that $G$ is a non-bipartite graph. Since $G$ is connected and non-bipartite, there exists $j\in [n]$ and an even
  path $i=i_0,i_1,\ldots,i_{2t}=j$, and an odd path $i=j_0,j_1,\ldots,j_{2s-1}=j$ in $G$ connecting $i$ and $j$.
  If there exists $\ell=i_0,\ldots,i_{2t}$ or $\ell=j_0,\ldots,j_{2s-1}$ with $x_\ell,y_\ell \in P$, then we are done.
  Otherwise, by an argument as in the bipartite case, we deduce from the generators $x_{i_r}x_{i_{r+1}}+y_{i_r}y_{i_{r+1}}$
  for all $r=0,\ldots,2t-1$, that $x_j\in P$. Similarly, we see that $y_j\in P$ by considering the generators attached to the odd path.
\end{proof}

\begin{Lemma}
  \label{novariable}
  Let $G$ be a connected graph on $[n]$, and let $P$ be a minimal prime ideal of $L_G$ which does not contain
  any variable. Suppose that $\sqrt{-1}\not\in K$. Then $P=I_{K_{n}}$ or $P=I_{K_{m,n-m}}$ for some $m$.
\end{Lemma}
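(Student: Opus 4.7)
\textbf{Proof plan for \ref{novariable}.}
Since $P$ contains no variable, none of $x_1,\dots,x_n,y_1,\dots,y_n$ lies in $P$, so $Y=\{(y_1\cdots y_n)^i\}$ is disjoint from $P$. My plan is therefore to extend everything to the localization $T_Y=K[z_1,\dots,z_n,y_1^{\pm 1},\dots,y_n^{\pm 1}]$ introduced before \ref{primeness-K_{m,n-m}}, where $z_i=x_i/y_i$. Under this identification, $L_GT_Y$ is generated by the polynomials $z_iz_j+1$ for $\{i,j\}\in E(G)$. The key elementary observation is that if $\{i,j\}\in E(G)$ then $z_iz_j\equiv -1\pmod{PT_Y}$, so $z_i$ is a unit modulo $PT_Y$ with $z_j\equiv -z_i^{-1}$. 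Consequently, for any walk $i=i_0,i_1,\dots,i_r=j$ in $G$ one has $z_i\equiv z_j\pmod{PT_Y}$ if $r$ is even and $z_iz_j\equiv -1\pmod{PT_Y}$ if $r$ is odd.

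I split into two cases according to whether $G$ is bipartite. Assume first that $G$ is bipartite with parts $V_1=\{1,\dots,m\}$ and $V_2=\{m+1,\dots,n\}$; since $G$ is connected, this bipartition is unique. Two vertices in the same part are joined by an even walk and two vertices in different parts by an odd walk, so $PT_Y$ contains all differences $z_i-z_j$ with $i,j$ in the same part and all $z_iz_j+1$ with $i\in V_1,\ j\in V_2$. Multiplying these elements by suitable products of $y_k$'s yields the standard generators $g_{ij}$ and $f_{ij}$ of $I_{K_{m,n-m}}$, so $I_{K_{m,n-m}}T_Y\subseteq PT_Y$. By \ref{nonzerodivisor} all $y_i$ are non-zero divisors modulo $I_{K_{m,n-m}}$, so contracting back to $T$ gives $I_{K_{m,n-m}}\subseteq P$. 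Since $G\subseteq K_{m,n-m}$ (viewed with the same bipartition) we have $L_G\subseteq I_{K_{m,n-m}}$, and \ref{primeness-K_{m,n-m}} tells us that $I_{K_{m,n-m}}$ is prime. Minimality of $P$ over $L_G$ then forces $P=I_{K_{m,n-m}}$.

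Assume next that $G$ is connected and non-bipartite. Fix an odd cycle $C$ and a vertex $c\in V(C)$. For any two vertices $i,j\in[n]$, choose walks from $i$ to $c$ and from $c$ to $j$; concatenating them gives one walk from $i$ to $j$, and inserting the odd cycle $C$ at $c$ yields a second walk of opposite parity. Hence $i$ and $j$ are joined by both an even and an odd walk, which implies $z_i\equiv z_j$ and $z_iz_j\equiv -1$ modulo $PT_Y$; in particular $z_i^2+1\in PT_Y$ for every $i$. Thus $PT_Y$ contains the elements $z_i-z_j$, $z_iz_j+1$ and $z_i^2+1$ of \eqref{alsolinear}--\eqref{squaregenerators}, and the same contraction argument (using \ref{nonzerodivisor}) gives $I_{K_n}\subseteq P$. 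Here I invoke \ref{primeness-K_{n}}(a), which uses $\sqrt{-1}\notin K$ essentially, to see that $I_{K_n}$ is prime; combined with $L_G\subseteq I_{K_n}$ and the minimality of $P$, this yields $P=I_{K_n}$.

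The only subtlety I expect is the handling of the passage between $T$ and its localization $T_Y$: one must verify both that $P$ survives localization (guaranteed by the hypothesis that no variable lies in $P$) and that the ideals $I_{K_n}$ and $I_{K_{m,n-m}}$ are unchanged under contraction from $T_Y$ (guaranteed by \ref{nonzerodivisor}). With these two points in place, everything reduces to the elementary propagation of parity along walks, which is the heart of the argument.
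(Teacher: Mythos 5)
Your proof is correct and follows essentially the same route as the paper: pass to the localization $T_Y$, propagate the relation $z_iz_j\equiv -1$ along walks to deduce that all standard generators of $I_{K_{m,n-m}}T_Y$ (resp.\ $I_{K_n}T_Y$) lie in the relevant ideal, then contract using primeness of $P$ and disjointness from $Y$. The only difference is cosmetic: you argue modulo $PT_Y$ directly, whereas the paper proves the slightly stronger intermediate identity $L_GT_Y = I_{\widetilde{G}}T_Y$ before contracting; both yield $I_{\widetilde{G}}\subseteq P$ and hence equality by minimality and \ref{primeness-K_{m,n-m}}, \ref{primeness-K_{n}}(a).
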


\begin{proof}
  Suppose first that  $\widetilde{G}=K_{m,n-m}$. We have
  $L_G\subset I_{K_{m,n-m}}$ and claim that
  $L_GT_Y=I_{K_{m,n-m}}T_Y$. In the case that $G=K_{1,1}$ there is
  nothing to prove. Thus we may assume that $G$ has at least three vertices.
  It suffices to show that $I_{K_{m,n-m}}T_Y \subset L_GT_Y$. The ideal
  $L_GT_Y$ is generated by the elements $z_iz_j+1$, where $\{i,j\}\in E(G)$.
  We will show that
  $z_i-z_j\in L_GT_Y$ for all $1\leq i<j\leq m$ and for all
  $m+1\leq i<j\leq n$. This
  together with \eqref{othergenerators} and \eqref{linearforms} then will
  imply that indeed
  $I_{K_{m,n-m}}\subset L_GT_Y$. Let $1\leq i<j\leq m$
  (the case $m+1\leq i<j\leq n$ can be treated similarly). Since $G$ is
  connected, there exists a path $i=i_0,i_1,\ldots,i_{2s}=j$ in $G$. We
  have $z_i-z_j=\sum_{t=1}^{s}(z_{i_{2t-2}}-z_{i_{2t}})$.
  So it suffices to prove each of the summands
  $z_{i_{2t-2}}-z_{i_{2t}}\in L_GT_Y$. Thus we may as well assume that $s=1$.
  We have
  $z_i-z_j=z_{i_0}-z_{i_2}=z_{i_0}(z_{i_1}z_{i_2}+1)-z_{i_2}(z_{i_0}z_{i_1}+1)$
  which is an element of $L_GT_Y$. It proves the claim that
  $L_GT_Y=I_{K_{m,n-m}}T_Y$. It follows that $I_{K_{m,n-m}}T_Y \subset PT_Y$,
  and hence $I_{K_{m,n-m}}\subset P$ since $P$ is a prime ideal and $y\notin P$.
  Finally, since $P$ is a minimal prime ideal of $L_G$, and $I_{K_{m,n-m}}$
  is a prime ideal by \ref{primeness-K_{m,n-m}}, we have $P=I_{K_{m,n-m}}$.
  Next we consider the case that $\widetilde{G}=K_{n}$. Similarly as in
  bipartite case we have $L_G\subset I_{K_{n}}$ and claim that
  $L_GT_Y=I_{K_n}T_Y$. It suffices to show that $I_{K_n}T_Y\subset L_GT_Y$.
  The ideal $L_GT_Y$ is generated by the elements $z_iz_j+1$,
  where $\{i,j\}\in E(G)$. We will show that $z_i-z_j\in L_GT_Y$ for all
  $1\leq i<j\leq n$. This together with \eqref{alsolinear} and
  \eqref{squaregenerators}
  then will imply that indeed $I_{K_{n}}T_Y\subset L_GT_Y$.
  Let $1\leq i<j\leq n$. Since $G$ is non-bipartite, there exists an even
  walk (not necessarily a path)
  in $G$ connecting $i$ and $j$. As in the bipartite case, we deduce from this
  fact that $z_i-z_j\in L_GT_Y$.
  As in the previous case it follows that $I_{K_n}\subset P$, and hence $P=I_{K_n}$
  since by \ref{primeness-K_{n}} $I_{K_n}$ is  a prime ideal.
\end{proof}

\begin{proof}[Proof of \ref{primarynew}]
  We prove the theorem by induction on the number of vertices of  $G$. If $|V(G)|=2$, then $G=K_{1,1}$ and $L_G=Q_{\emptyset}(G)$.
  Now suppose that $|V(G)|>2$, and let $G_1,\ldots,G_t$ be the connected components of $G$.
  Suppose first that  $t>1$. For $i=1,\ldots,t$ let $P_i$ be a minimal prime ideal of $L_{G_i}$ which is contained in $P$.
  Then $\sum_{i=1}^tP_i\subset P$. Since $|V(G_i)|<|V(G)|$ for all $i$, our induction hypothesis implies that there exists subset
  $S_i$ such that $P_i=Q_{S_i}(G_i)$. Therefore, $\sum_{i=1}^tQ_{S_i}(G_i)\subset P$. Since $\sum_{i=1}^tQ_{S_i}(G_i)=Q_S(G)$
  where $S=\bigcup_{i=1}^t S_i$, it follows that $Q_S(G)\subset P$. By \ref{difficult}, $Q_S(G)$ is a prime ideal, and hence
  $P=Q_S(G)$ because $P$ is a minimal prime ideal of $L_G$.
  Next suppose that $t=1$. If $P$ does not contain any variable, then by \ref{novariable} either $P=I_{K_n}$ or $P=I_{K_{m,n-m}}$
  for suitable $m$. In either case, $P=Q_{\emptyset}(G)$. If $P$ contains a variable, then by \ref{pair}, there exists $k$ such that
  $x_k,y_k\in P$. Let $\overline{P}=P/(x_k,y_k)$. Then $\overline{P}$ is a minimal prime ideal of $L_{G_{[n]\setminus \{k\}}}$. By induction
  hypothesis, there exists $\overline{S}\subset [n]\setminus \{k\}$ such that $\overline{P}=Q_{\overline{S}}(G_{[n]\setminus \{k\}})$.
  It follows that $P=Q_{S}(G)$ where $S=\overline{S}\cup \{k\}$.
  Since by \ref{difficult} all $Q_S(G)$ are prime ideals and each $Q_S(G)$ contains $L_G$, the identity $L_G=\Sect_{S\subset [n]}Q_S(G)$
  follows from the first part of the theorem and the fact that $L_G=\sqrt{L_G}$, see \ref{radical}.
\end{proof}

Now by \ref{height} and \ref{primarynew}, we get the following corollary about the Krull dimension of $T/L_G$.

\begin{Corollary}
  \label{dim}
  Let $G$ be a graph on $[n]$, and let $\sqrt{-1}\notin K$. Then
  \[
    \dim (T/L_G)=\max \{n-|S|+b(S):S \subset [n]\}.
  \]
  In particular, $\dim (T/L_G)\geq n+b$ where $b$ is the number of
  bipartite connected components of $G$.
  Moreover, if $L_G$ is unmixed, then $\dim (T/L_G)=n+b$.
\end{Corollary}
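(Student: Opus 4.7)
The plan is to combine the primary decomposition from \ref{primarynew} with the height formula from \ref{height}. Under the hypothesis $\sqrt{-1}\not\in K$, \ref{primarynew} gives $L_G = \Sect_{S \subset [n]} Q_S(G)$, and by \ref{difficult} each $Q_S(G)$ is prime. Hence the minimal primes of $L_G$ are precisely the minimal elements of the finite family $\{Q_S(G)\}_{S \subset [n]}$ under inclusion. Since each $T/Q_S(G)$ is a finitely generated domain over a field and $\dim T = 2n$, we have
\[
  \dim T/Q_S(G) \;=\; 2n - \height Q_S(G) \;=\; n - |S| + b(S)
\]
by \ref{height}. As $\dim T/L_G$ equals the maximum of $\dim T/P$ over the minimal primes $P$ of $L_G$, and taking the maximum over minimal primes is the same as taking it over the whole family $\{Q_S(G)\}$, the main formula follows at once.

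For the ``in particular'' lower bound I take $S = \emptyset$. By definition $b(\emptyset)$ is the number of bipartite connected components of $G$, which is $b$, and the formula gives $\dim T/L_G \geq n + b$.

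The equality $\dim T/L_G = n + b$ under unmixedness reduces to showing that $Q_\emptyset(G)$ is always a minimal prime of $L_G$. Granting this, if $L_G$ is unmixed all its minimal primes share the common height $\height Q_\emptyset(G) = n - b$, and so $\dim T/L_G = 2n - (n-b) = n + b$. To prove minimality in the family $\{Q_S(G)\}$, I will check that no $Q_S(G)$ with $S \neq \emptyset$ is contained in $Q_\emptyset(G)$. Any such $Q_S(G)$ contains the variables $x_i,y_i$ for $i \in S$, whereas $Q_\emptyset(G) = \sum_{j} I_{\widetilde{G}_j}$ is a sum of ideals on pairwise disjoint sets of variables. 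Each $I_{\widetilde{G}_j}$ is prime by \ref{primeness-K_{n}} and \ref{primeness-K_{m,n-m}}, and by \ref{nonzerodivisor} every variable is a nonzerodivisor modulo $I_{\widetilde{G}_j}$; hence $T/Q_\emptyset(G)$ is a tensor product of domains in which every variable survives as a nonzero element. Therefore no variable lies in $Q_\emptyset(G)$, so $Q_S(G) \not\subset Q_\emptyset(G)$ for $S \neq \emptyset$, proving minimality.

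The only real obstacle is this final minimality verification for $Q_\emptyset(G)$; the remaining two assertions are direct translations of \ref{primarynew} and \ref{height}.
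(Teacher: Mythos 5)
Your proposal is correct and takes essentially the same route as the paper: combine the decomposition of \ref{primarynew} with the height formula of \ref{height}, and observe that $Q_{\emptyset}(G)$ appears among the minimal primes. The one place you supply more detail than the paper's terse proof is the verification that $Q_{\emptyset}(G)$ is a minimal prime (because it contains no variable while every other $Q_S(G)$ does); the paper proves this same fact, but only later, inside the proof of \ref{minimalprimes2}.
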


\begin{proof}
  The first equality follows easily from \ref{height} and \ref{primarynew}. Moreover, it follows from \ref{primarynew} that
  $\dim (T/L_G) \geq \dim (T/Q_{\emptyset}(G))$.
\end{proof}

Note that the lower bound given in \ref{dim} might be strict. For example let $G$ be the graph which is shown in Figure~\ref{butterfly}. Then
$\dim (T/L_G)=6$, while $n=5$ and $b=0$. On the other hand, $\dim (T/L_G)=n+b$ does not in general imply that $L_G$ is unmixed. For instance,
$\dim (T/L_{K_{2,2}})=5$ and in this case we have $n=4$ and $b=1$,
but $L_{K_{2,2}}$ is not unmixed (see also \ref{cycle}).

\begin{figure}[hbt]
\begin{center}
\psset{unit=0.6cm}
\begin{pspicture}(-1,-2)(4,2)
\rput(-0.1,-1){
\rput(0,0){$\bullet$}
\rput(1.5,1){$\bullet$}
\rput(0,2){$\bullet$}

\rput(3,0){$\bullet$}
\rput(3,2){$\bullet$}
\psline(0,0)(0,2)
\psline(1.5,1)(0,2)
\psline(1.5,1)(0,0)
\psline(1.5,1)(3,0)
\psline(3,2)(1.5,1)
\psline(3,0)(3,2)
}
\end{pspicture}
\end{center}
\caption{}\label{butterfly}
\end{figure}

Finally, we give a proof of the geometric consequences of \ref{primarynew}.

\begin{proof}[Proof of \ref{geometriclovasz}]
  It is a direct consequence of \ref{primarynew} that $\OR_2^K(G)$ is the
  union of the vanishing sets of the $Q_S(\overline{G})$ for $S \subseteq [n]$.
  Assume that the deletion of $S$ from $\overline{G}$ has a non-singleton non-bipartite
  component $H$. Then for all vertices $i$ from $H$ the element $x_i^2+y_i^2$ is
  in $Q_S(\overline{G})$. By $\sqrt{-1} \not\in K$ this implies that $x_i = y_i =0$
  in the vanishing set of $Q_S(\overline{G})$. But this implies that the vanishing set
  of $Q_S(\overline{G})$ is contained in the vanishing set of $Q_{S \cup V(H)}(\overline{G})$.
  Thus $\OR_2^\RR(G)$ is the union of the vanishing sets of $Q_{S}(\overline{G})$ where
  $S \subseteq [n]$ such that the deletion of $S$ from $\overline{G}$ creates only
  singleton or bipartite connected components.
  Now a direct translation of the equations expressing $Q_S(\overline{G})$ yields
  the formulation of the vanishing set given in \ref{geometriclovasz}.
  Finally, by \ref{difficult} all ideals $Q_S(\overline{G})$ are prime.
\end{proof}

\section{The minimal prime ideals of $L_G$ when $\sqrt{-1}\notin K$}
\label{section3}

Now we want to find all the minimal prime ideals of the ideal $L_G$ when $\sqrt{-1}\notin K$.
For this purpose, we need the following proposition and some other definitions.

\begin{Proposition}
  \label{minimalprimes1}
  Let $G$ be a graph on $[n]$, and let $S,W\subset [n]$. Let $H_1,\ldots,H_t$ and $G_1,\ldots,G_s$
  be the connected components of $G_{[n]\setminus S}$ and
  $G_{[n]\setminus W}$, respectively. Then $Q_S(G)\subset Q_W(G)$ if and only if $S\subset W$ and for all $i\in [t]$ with $|V(H_i)|>1$, there exists $j\in [s]$ such that $V(H_i)\setminus W\subset V(G_j)$, and if $H_i$ is bipartite
  (resp. non-bipartite), then $G_j$ is also bipartite (resp. non-bipartite).
\end{Proposition}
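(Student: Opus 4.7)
The plan is to exploit the fact that $Q_W(G)$ decomposes as the sum of the linear ideal $(\{x_k,y_k\}_{k\in W})$ together with the prime ideals $I_{\widetilde{G}_j}$, which involve pairwise disjoint sets of variables supported on the vertex sets $V(G_j)\subset [n]\setminus W$. Passing to the quotient of $T$ modulo $(\{x_k,y_k\}_{k\in W})$ turns $Q_W(G)$ into a sum over $j$ of ideals on pairwise disjoint variable sets, which allows one to isolate the contribution of each component $\widetilde{G}_j$. The bipartite/non-bipartite distinction will be controlled by the generators $h_a=x_a^2+y_a^2$: they lie in $I_{K_n}$ by definition but, by the argument in the proof of \ref{primeness-K_{m,n-m}} that exploits the hypothesis $\sqrt{-1}\notin K$, they fail to lie in any $I_{K_{m,n-m}}$.

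For the forward direction, assume $Q_S(G)\subset Q_W(G)$. For $k\in S$ the variable $x_k\in Q_W(G)$ must lie in $(\{x_m,y_m\}_{m\in W})$ since every other generator of $Q_W(G)$ is quadratic in variables indexed by $[n]\setminus W$; hence $k\in W$ and so $S\subset W$. Next fix $i$ with $|V(H_i)|>1$ and $V(H_i)\setminus W\neq\emptyset$, and pick $a\in V(H_i)\setminus W$. For any other $b\in V(H_i)\setminus W$, an appropriate binomial $f_{ab}$ or $g_{ab}$ (determined by the block relation if $H_i$ is bipartite) lies in $I_{\widetilde{H}_i}\subset Q_W(G)$; projecting modulo $(\{x_m,y_m\}_{m\in W})$, the disjointness of the variable sets supporting the $I_{\widetilde{G}_{j'}}$ forces this binomial into a single $I_{\widetilde{G}_{j'}}$ containing both $a$ and $b$, so $a$ and $b$ lie in the same component $V(G_j)$. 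Moreover, if $H_i$ is non-bipartite, then $h_a\in I_{\widetilde{H}_i}$ projects into the unique $I_{\widetilde{G}_j}$ with $a\in V(G_j)$, and the distinguishing property of $h_a$ forces $G_j$ to be non-bipartite. Finally, $S\subset W$ implies that every path in $G_{[n]\setminus W}$ is also a path in $G_{[n]\setminus S}$, so $V(G_j)\subset V(H_i)$, and $G_j$, being an induced subgraph of $H_i$, inherits bipartiteness whenever $H_i$ is bipartite.

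For the backward direction, assuming $S\subset W$ together with the matching condition, one checks each generator of $Q_S(G)$ individually. Variables $x_k,y_k$ with $k\in S$ lie in $Q_W(G)$ since $S\subset W$, and any generator of $I_{\widetilde{H}_i}$ with $V(H_i)\subset W$ is likewise absorbed into $(\{x_m,y_m\}_{m\in W})$. Otherwise, a generator of $I_{\widetilde{H}_i}$ involving a variable with index in $W$ is again absorbed, while a generator whose indices lie entirely in $V(H_i)\setminus W\subset V(G_j)$ is also a generator of $I_{\widetilde{G}_j}$, because the bipartition of $\widetilde{G}_j$ is the restriction of that of $\widetilde{H}_i$ to $V(G_j)$ when $H_i$ is bipartite, whereas both $\widetilde{G}_j$ and $\widetilde{H}_i$ are complete graphs on their respective vertex sets when $H_i$ is non-bipartite. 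The main technical obstacle lies in the forward direction, namely in establishing the single-component containment $V(H_i)\setminus W\subset V(G_j)$ and the bipartite/non-bipartite match; both are extracted via the projection-and-disentanglement argument combined with the distinguishing role of $h_a$, and crucially rely on the hypothesis $\sqrt{-1}\notin K$.
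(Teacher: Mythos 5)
Your proposal is correct and follows essentially the same strategy as the paper: pass to the quotient by $U_W=(\{x_m,y_m\}_{m\in W})$, exploit the disjointness of the variable sets underlying the component ideals $I_{\widetilde{G}_j}$ to force each $f_{ab}$, $g_{ab}$ into a single component, and use the element $h_a=x_a^2+y_a^2$ to distinguish bipartite from non-bipartite components. Where you assert that "disjointness forces" the binomial into a single $I_{\widetilde{G}_{j'}}$, the paper makes this precise by substituting $x_m=y_m=0$ for all $m\neq a,b$ and comparing monomials against the (possibly surviving) generators $h_a,h_b$; your argument is a compressed version of this and is sound, but that step is where the real content lies, and a referee would want to see the monomial comparison spelled out.

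One inaccuracy: you claim the proof "crucially" relies on $\sqrt{-1}\notin K$, attributing the fact $h_a\notin I_{K_{m,n-m}}$ to the hypothesis used in the proof of \ref{primeness-K_{m,n-m}}. In fact \ref{primeness-K_{m,n-m}} makes no assumption on $\sqrt{-1}$, and the non-membership $h_a\notin I_{K_{m,n-m}}$ follows for any field directly from the Gr\"obner basis of Lemma~\ref{quadratic} (since $x_a^2$ is not among the leading monomials) or from the localization computation. Correspondingly, the paper's proof of \ref{minimalprimes1} is field-independent; the hypothesis $\sqrt{-1}\notin K$ only becomes essential in \ref{difficult} and \ref{minimalprimes2}, where one needs each $Q_S(G)$ to be prime. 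This does not break your argument, but the attribution should be removed.
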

\begin{proof}
  For every $A\subset [n]$, let $U_A=(x_i,y_i:i\in A)$. Then $Q_S(G)=(U_S,I_{\widetilde{H}_1},\ldots,I_{\widetilde{H}_t})$ and
  $Q_W(G)=(U_W,I_{\widetilde{G}_1},\ldots,I_{\widetilde{G}_s})$. One has $Q_S(G)\subset Q_W(G)$ if and only if $S\subset W$ and
  $(U_W,I_{\widetilde{H}_1},\ldots,I_{\widetilde{H}_t})\subset (U_W,I_{\widetilde{G}_1},\ldots,I_{\widetilde{G}_s})$. For all $i=1,\ldots,t$ let $I'_{\widetilde{H}_i}$ be the ideal generated by those generators of $I_{\widetilde{H}_i}$ which belong to $R=K[x_i,y_i:i\in [n]\setminus W]$. Then $(U_W,I_{\widetilde{H}_1},\ldots,I_{\widetilde{H}_t})=(U_W,I'_{\widetilde{H}_1},\ldots,I'_{\widetilde{H}_t})$. It follows that $Q_S(G)\subset Q_W(G)$ if and only if $S\subset W$ and $(U_W,I'_{\widetilde{H}_1},\ldots,I'_{\widetilde{H}_t})\subset (U_W,I_{\widetilde{G}_1},\ldots,I_{\widetilde{G}_s})$. The latter inclusion
  holds if and only if $(I'_{\widetilde{H}_1},\ldots,I'_{\widetilde{H}_t})\subset (I_{\widetilde{G}_1},\ldots,I_{\widetilde{G}_s})$, since the generators of the
  ideals $(I'_{\widetilde{H}_1},\ldots,I'_{\widetilde{H}_t})$ and $(I_{\widetilde{G}_1},\ldots,I_{\widetilde{G}_s})$ belong to $R$.
  Now suppose $S\subset W$. It is
  enough to show that the following conditions are equivalent:
  \begin{enumerate}
    \item[(a)] For all $i\in [t]$ with $|V(H_i)|>1$, there exists $j\in [s]$ such that $V(H_i)\setminus W\subset V(G_j)$, and if $H_i$ is bipartite (resp.
      non-bipartite), then $G_j$ is also bipartite (resp. non-bipartite).
    \item[(b)] $(I'_{\widetilde{H}_1},\ldots,I'_{\widetilde{H}_t})\subset (I_{\widetilde{G}_1},\ldots,I_{\widetilde{G}_s})$.
  \end{enumerate}
  By definitions and notation, (a)\implies (b) is clear.
  For the converse, let $i\in [t]$ with $|V(H_i)|>1$, and let $k\in V(H_i)\setminus W$. Then $k\in V(G_j)$ for some $j\in [s]$. We claim that $V(H_i)\setminus W\subset V(G_j)$.
  If $V(H_i)\setminus W=\{k\}$, there is nothing to prove. So we may assume that $|V(H_i)\setminus W|\geq 2$. Suppose that there is an element $l\in
  V(H_i)\setminus W$ such that $l\neq k$ and $l\notin V(G_j)$. Then there exists $r\in [s]$ with $r\neq j$ such that $l\in V(G_r)$. We may assume that $k<l$.
  First suppose that $H_i$ is a bipartite graph on $A_1\cup A_2$. Since $V(H_i)\setminus W$ is non-empty, it follows that each connected component of
  ${(H_i)}_{[n]\setminus W}$ is a connected component of $G_{[n]\setminus W}$, and since $H_i$ is bipartite, each of its components is bipartite as well. Hence
  \begin{eqnarray}
    \label{bipartitecomponent}
    V(H_i)\setminus W\subset \bigcup_{d=1\atop G_d \text{ bipartite}}^s V(G_d).
  \end{eqnarray}
  In particular, $G_j$ and $G_r$ are bipartite.
  If $k,l\in A_1$ or $k,l\in A_2$, then
  $g_{kl}=x_ky_l-x_ly_k\in I'_{\widetilde{H}_i}$. Hence by the assumption~(b),
  $g_{kl}\in (I_{\widetilde{G}_1},\ldots,I_{\widetilde{G}_s})$. Thus
  \begin{eqnarray}
    \label{sum1}
    g_{kl}=\sum_{t=1}^pr_tq_t,
  \end{eqnarray}
  where $r_t\in T$ and each $q_t$ is a generator of
  $(I_{\widetilde{G}_1},\ldots,I_{\widetilde{G}_s})$.
  Now, for $m\neq k,l$, we put all variables $x_m$ and $y_m$
  equal to zero in the equality \eqref{sum1} and denote by
  $\overline{q}_t$ the image of $q_t$ under this reduction.
  Then all $\overline{q}_t$ which are different
  from the binomials $f_{kl}$, $g_{kl}$, $h_k$ and $h_l$, are zero.
  Since $k$ and $l$ are contained in the different components
  $G_j$ and $G_r$, respectively, it follows that
  $\overline{q}_t\neq f_{kl},g_{kl}$. Also, since $k$ and $l$ belong
  to the bipartite components $G_j$ and $G_r$, respectively, it follows that
  $\overline{q}_t\neq h_{k},h_{l}$. Thus we see that after this reduction the
  right-hand side of \eqref{sum1} is zero while the left-hand side is non-zero,
  a contradiction. If $k\in A_1$ and $l\in A_2$, then
  $f_{kl}=x_kx_l+y_ky_l\in I'_{\widetilde{H}_i}$. Hence by the assumption
  (b), $f_{kl}\in (I_{\widetilde{G}_1},\ldots,I_{\widetilde{G}_s})$. Then,
  similar to the previous case, we get a contradiction.
  Next, suppose that $H_i$ is non-bipartite.
  So $g_{kl}=x_ky_l-x_ly_k\in I'_{\widetilde{H}_i}$, and hence by
  assumption (b),
  $g_{kl}\in (I_{\widetilde{G}_1},\ldots,I_{\widetilde{G}_s})$. Thus
  $g_{kl}=\sum_{t=1}^pr_tq_t$, where $r_t\in T$ and each $q_t$ is a
  generator of $(I_{\widetilde{G}_1},\ldots,I_{\widetilde{G}_s})$. Now,
  as before, for $m\neq k,l$, we put all variables $x_m$ and $y_m$ equal
  to zero in this equality.
  After reduction it follows that $g_{kl}$ can be written as
  $g_{kl}=r(x_k^2+y_k^2)+r'(x_l^2+y_l^2)$ for some polynomials
  $r,r'\in T$, which is a contradiction.
  Thus we see that indeed $V(H_i)\setminus W\subset V(G_j)$ which proves
  the claim.
  \medskip
  By \eqref{bipartitecomponent}, it also follows that if $H_i$ is bipartite,
  then $G_j$ is bipartite. Now, we show that if $H_i$ is non-bipartite,
  then $G_j$ is also non-bipartite. Indeed, if $H_i$ is non-bipartite, then
  $h_k=x_k^2+y_k^2\in I'_{\widetilde{H}_i}$, and hence  by the assumption (b),
  $h_{k}\in (I_{\widetilde{G}_1},\ldots,I_{\widetilde{G}_s})$. Thus
  \begin{eqnarray}
    \label{sum2}
    h_k=\sum_{t=1}^pr_tq_t,
  \end{eqnarray}
  where $r_t\in T$ and each $q_t$ is a generator of
  $(I_{\widetilde{G}_1},\ldots,I_{\widetilde{G}_s})$. If $h_k\neq q_t$ for
  all $t=1,\ldots,p$, then by substituting  all variables $x_m$ and $y_m$
  equal to zero for $m\neq k$, as before, we get $h_k=0$, which is a
  contradiction. It follows
  that $h_k=q_t$ for some $t=1,\ldots,p$. Therefore, $h_k$ is a generator
  of $I_{\widetilde{G}_j}$, and hence $G_j$ is a non-bipartite graph, too.
\end{proof}

Let $G$ be a graph on $[n]$. Then the vertex $i\in [n]$ is said to be a
\textit{cut point} of $G$ if $G_{[n]\setminus \{i\}}$ has more connected
components than $G$. Moreover, we call a vertex $i\in [n]$ a
\textit{bipartition point} of $G$ if $G_{[n]\setminus \{i\}}$ has more
bipartite connected components than $G$.
Let $\MM(G)$ be the set of all sets $S\subset [n]$ such that each $i\in S$ is
either a cut point or a bipartition point of the graph $G_{([n]\setminus
S)\cup \{i\}}$. In particular, $\emptyset\in \MM(G)$.
Now we are ready to determine all minimal prime ideals of $L_G$ in the case
$\sqrt {-1}\notin K$.

\begin{Theorem}
  \label{minimalprimes2}
  Let $G$ be a graph on $[n]$, and $S\subset [n]$. Suppose $\sqrt {-1}\notin K$. Then $Q_S(G)$
  is a  minimal prime ideal of $L_G$ if and only if $S\in \MM(G)$.
\end{Theorem}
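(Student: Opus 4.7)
The plan is to combine \ref{primarynew} and \ref{difficult}: the former gives $L_G = \bigcap_{S \subseteq [n]} Q_S(G)$, while the latter asserts that each $Q_S(G)$ is prime. Hence the minimal primes of $L_G$ are precisely the inclusion-minimal members of the family $\{Q_S(G)\}_{S\subseteq[n]}$, and \ref{minimalprimes1} provides a combinatorial criterion for comparing $Q_W(G)$ and $Q_S(G)$. The theorem therefore reduces to identifying those $S$ for which $Q_S(G)$ is inclusion-minimal in this family.

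For the direction $S \notin \MM(G) \Rightarrow Q_S(G)$ is not minimal, I pick $i\in S$ that is neither a cut point nor a bipartition point of $G'_i := G_{([n]\setminus S)\cup\{i\}}$ and set $W = S\setminus\{i\}$, so that $G_{[n]\setminus W} = G'_i$. Every non-singleton component of $G'_i$ either avoids $i$, and therefore coincides with a component of $G_{[n]\setminus S}$ of the same bipartite type, or is the component $D$ containing $i$, in which case the hypothesis on $i$ guarantees that $D\setminus\{i\}$ is a single connected component of $G_{[n]\setminus S}$ of the same bipartite type as $D$. Thus the criterion of \ref{minimalprimes1} is met and $Q_W(G)\subseteq Q_S(G)$; this inclusion is strict since $x_i \in Q_S(G)$ but $x_i \notin Q_W(G)$, as $Q_W(G)$ is a homogeneous ideal whose degree-$1$ part lies in the span of $\{x_j, y_j : j \in W\}$ and $i \notin W$.

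For the converse, assume $S\in\MM(G)$, and suppose for contradiction that $Q_W(G)\subsetneq Q_S(G)$ for some $W\subsetneq S$. Pick $i\in S\setminus W$, let $H$ be the connected component of $G_{[n]\setminus W}$ containing $i$, and let $D$ be the connected component of $G'_i$ containing $i$. Because $G'_i$ is an induced subgraph of $G_{[n]\setminus W}$, one has $V(D)\subseteq V(H)$; moreover an isolated vertex of $G'_i$ would be neither a cut nor a bipartition point, so $S\in\MM(G)$ forces $|V(D)|\geq 2$. Applying \ref{minimalprimes1} yields a component $G_j$ of $G_{[n]\setminus S}$ with $V(D)\setminus\{i\}\subseteq V(H)\setminus S\subseteq V(G_j)$ and of the same bipartite type as $H$. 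If $i$ is a cut point of $G'_i$, then $D\setminus\{i\}$ decomposes into at least two distinct components of $G_{[n]\setminus S}$, which cannot all be contained in the single component $G_j$, a contradiction. If $i$ is a bipartition point of $G'_i$ but not a cut point, then $D$ is non-bipartite while $D\setminus\{i\}$ is a single bipartite component of $G_{[n]\setminus S}$ forced to equal $G_j$; but $H\supseteq D$ is non-bipartite, so type compatibility makes $G_j$ non-bipartite, again a contradiction.

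The main obstacle is the careful bookkeeping of connected components and their bipartite type when passing between the three graphs $G_{[n]\setminus W}$, $G'_i$, and $G_{[n]\setminus S}$; once the nesting $V(D)\subseteq V(H)$ and the observation that $D\setminus\{i\}$ consists of entire components of $G_{[n]\setminus S}$ are in hand, both implications reduce to direct applications of \ref{minimalprimes1}.
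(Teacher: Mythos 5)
Your proposal is correct and follows essentially the same route as the paper's proof: reduce the problem via \ref{primarynew} and \ref{difficult} to identifying the inclusion-minimal ideals in $\{Q_S(G)\}_{S\subseteq[n]}$, and then use the comparison criterion \ref{minimalprimes1} together with a case analysis on whether $i$ is a cut point or bipartition point of $G_{([n]\setminus S)\cup\{i\}}$. Your forward direction is phrased as a contrapositive (explicitly constructing $W=S\setminus\{i\}$ with $Q_W\subsetneq Q_S$) while the paper derives a contradiction inside a case split on the number $k$ of components of $G_{[n]\setminus S}$ adjacent to $i$, but these are the same argument; the converse directions are identical in substance, and both correctly handle the degenerate case of an isolated $i$ and the bipartite-type bookkeeping.
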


\begin{proof}
  To prove the theorem we apply a similar strategy as used in the proof of \cite[Cor.~3.9]{HHHKR}.
  Assume first that $Q_S(G)$ is a minimal prime ideal of $L_G$. Suppose that $S\neq \emptyset$ and let
  $G_1,\ldots,G_r$ be the connected components of $G_{[n]\setminus S}$. We show that $S\in \MM(G)$. Let
  $i\in S$ and $T=S\setminus \{i\}$. Now we show that $i$ is either a cut point or a bipartition point of
  the graph $G_{[n]\setminus T}$.
  If $i$ is not adjacent to any vertex of $G_1,\ldots,G_r$, then the connected components of $G_{[n]\setminus T}$
  are $G_1,\ldots,G_r$ together with the isolated vertex $i$. So by \ref{minimalprimes1},
  $Q_T(G)\subsetneq Q_S(G)$. This is impossible, since by the assumption $Q_S(G)$ is a minimal prime ideal
  of $L_G$, and by \ref{difficult}, $Q_T(G)$ is a prime ideal containing $L_G$. So there
  exist some connected components of $G_{[n]\setminus S}$, say $G_1,\ldots,G_k$, which have at least one
  vertex adjacent to $i$. Then $G'_1,G_{k+1},\ldots,G_r$ are the connected components of $G_{[n]\setminus T}$,
  where $G'_1$ is the induced subgraph of $G_{[n]\setminus T}$ on $(\bigcup_{j=1}^kV(G_j))\cup \{i\}$.
  First suppose that $k=1$. Then $i$ is not a cut point of $G_{[n]\setminus T}$. If $G'_1$ is bipartite, then $G$ is also bipartite.
  Therefore, by \ref{minimalprimes1}, we have $Q_T(G)\subsetneq Q_S(G)$, contradicting the fact
  that $Q_S(G)$ is a minimal prime ideal of $L_G$.
  Similarly, if $G'_1$ and $G_1$ are both  non-bipartite, we get a contradiction. If $G'_1$ is non-bipartite
  and $G_1$ is bipartite, then $i$ is a bipartition point of $G_{[n]\setminus T}$.
  Next suppose that $k\geq 2$. Then clearly $i$  is a cut point of $G_{[n]\setminus T}$.
  Thus we have that indeed $S\in \MM(G)$.
  Conversely, since $Q_{\emptyset}(G)$ does not contain any variable, it is not contained in any other $Q_T(G)$.
  So $Q_{\emptyset}(G)$ is a minimal prime ideal of
  $L_G$. Now let $\emptyset\neq S\in \MM(G)$ and let $G_1,\ldots,G_r$ be the connected components of
  $G_{[n]\setminus S}$. Suppose that $Q_S(G)$ is not a minimal prime ideal of $L_G$. Then by \ref{difficult}
  and \ref{minimalprimes1}, there exists some $T\subsetneq S$ such that $Q_T(G)\subsetneq Q_S(G)$.
  Let $i\in S\setminus T$. Then $i$ is either a cut point or a bipartition point
  of $G_{([n]\setminus S)\cup \{i\}}$, since $S\in \MM(G)$.
  If $i$ is a cut point of $G_{([n]\setminus S)\cup \{i\}}$, then by a similar argument as in the first part of
  the proof, $G'_1,G_{k+1},\ldots,G_r$ are the connected components of $G_{([n]\setminus S)\cup \{i\}}$,
  where $k\geq 2$ and $G'_1$ is the induced subgraph of $G_{([n]\setminus S)\cup \{i\}}$ on
  $(\bigcup_{j=1}^kV(G_j))\cup \{i\}$. Thus $G_{[n]\setminus T}$ has a connected component $H$ which contains
  $G'_1$ as an induced subgraph. So $\bigcup_{j=1}^kV(G_j)\subset V(H)\setminus S$, which contradicts the fact
  that $Q_T(G)\subset Q_S(G)$, by \ref{minimalprimes1}.
  If $i$ is a bipartition point but not a cut point of $G_{([n]\setminus S)\cup \{i\}}$, then by a similar
  argument as in the first part of the proof, $G'_1,G_2,\ldots,G_r$ are the connected components of
  $G_{([n]\setminus S)\cup \{i\}}$, where $G'_1$ is the induced subgraph of $G_{([n]\setminus S)\cup \{i\}}$
  on $V(G_1)\cup \{i\}$ such that $G'_1$ is non-bipartite and $G_1$ is bipartite. Thus $G_{[n]\setminus T}$ has
  a connected component $H$ which contains $G'_1$ as an induced subgraph, and hence $H$ is also a non-bipartite graph.
  Moreover, $V(G_1)\subset V(H)\setminus S$. Hence by \ref{minimalprimes1},
  $V(G_1)=V(H)\setminus S$, since $Q_T(G)\subset Q_S(G)$. Again by applying \ref{minimalprimes1} we obtain
  a contradiction, since $H$ is non-bipartite but $G_1$ is bipartite.
\end{proof}

Let $G$ be the graph which is shown in Figure~\ref{cutpoints}, and let $T_1=\{4\}$, $T_2=\{4,5\}$ and $T_3=\{2,6\}$.
Note that the vertex $4$ is a cut point but not a bipartition point of the graph $G_{([7]\setminus T_2)\cup \{4\}}$,
while $5$ is a bipartition point but not a cut point of the graph $G_{([7]\setminus T_2)\cup \{5\}}$. By applying
\ref{minimalprimes2}, we have that the ideals $Q_{T_1}(G)$, $Q_{T_2}(G)$ and $Q_{T_3}(G)$ are minimal prime
ideals of $L_G$ when $\sqrt {-1}\notin K$. But $Q_{T_4}(G)$, where $T_4=\{3,7\}$, is not a minimal prime ideal of $L_G$,
because the vertex $7$ is neither a cut point nor a bipartition point of the graph $G_{([7]\setminus T_4)\cup \{7\}}$.

\begin{figure}[hbt]
\begin{center}
\psset{unit=0.6cm}
\begin{pspicture}(-0.5,-0.5)(7,2.5)
\psdots(0,0)(1.5,1)(0,2)(3.5,1)(5,0)(6.5,1)(5,2)
\psline(0,0)(0,2)
\psline(1.5,1)(0,2)
\psline(1.5,1)(0,0)
\psline(1.5,1)(3.5,1)
\psline(3.5,1)(5,0)
\psline(5,2)(3.5,1)
\psline(5,0)(5,2)
\psline(5,0)(6.5,1)
\psline(6.5,1)(5,2)
\uput[120](3.5,1){$4$}
\uput[90](5,2){$5$}
\uput[140](0,-0.45){$2$}
\uput[120](1.85,1){$3$}
\uput[120](0,2){$1$}
\uput[140](5.45,-0.85){$6$}
\uput[90](6.5,1){$7$}
\end{pspicture}
\end{center}
\caption{} \label{cutpoints}
\end{figure}

In the next corollary we determine when $L_G$ is a prime ideal for $\sqrt{-1}\notin K$.

\begin{Corollary}
  \label{prime}
  Let $K$ be a field such that $\chara(K) = 0$ or $\chara(K) \not\equiv 1,2 ~\mod 4$.
  Then the ideal $L_G$ is prime if and only if $G$ is a disjoint union of edges and isolated vertices.
\end{Corollary}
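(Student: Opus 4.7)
The plan is to reformulate primality of $L_G$ as the combinatorial condition $\MM(G)=\{\emptyset\}$ by using the primary decomposition results of Section~\ref{section3}, and then verify the graph-theoretic equivalence by case analysis. Under the hypothesis on $\chara(K)$, one has $\sqrt{-1}\notin K$ and $\chara(K)\neq 2$, so $L_G$ is radical by \ref{radical-L_G} and, by \ref{primarynew} together with \ref{minimalprimes2}, the minimal primes of $L_G$ are exactly the ideals $Q_S(G)$ with $S\in\MM(G)$. Because by \ref{nonzerodivisor} the variable $x_i$ lies in $Q_S(G)$ precisely when $i\in S$, distinct subsets give distinct primes; since $\emptyset\in\MM(G)$ always, $L_G$ is prime if and only if $\MM(G)=\{\emptyset\}$, and this is the criterion I must match against the graph-theoretic condition.

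For the ``if'' direction, suppose $G$ is a disjoint union of edges and isolated vertices. Then every restriction $G_{([n]\setminus S')\cup\{v\}}$ is again a disjoint union of edges and isolated vertices, and within such a graph removing $v$ either erases an isolated component or converts an edge component into a single isolated vertex. Under the convention that $K_1$ is bipartite, neither operation raises the total number of components or the number of bipartite components; hence no vertex is ever a cut or bipartition point of a restriction, so $\MM(G)=\{\emptyset\}$.

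For the converse, I assume $G$ has a connected component $H$ with $|V(H)|\geq 3$ and exhibit a nonempty $S\subseteq V(H)$ in $\MM(G)$. If $H$ has a cut vertex $v$, then $\{v\}\in\MM(H)\subseteq\MM(G)$ immediately. If $H$ is $2$-connected and bipartite with parts $A,B$, then $|A|,|B|\geq 2$ (otherwise $H$ would be a star and hence have a cut vertex), every vertex $v\in A$ has at least two neighbors in $B$, and the induced subgraph $H[B\cup\{v\}]$ is a star centered at $v$ together with $|B|-\deg_H(v)$ isolated vertices; deleting $v$ scatters this into $|B|$ isolated vertices, strictly increasing the component count, so each $v\in A$ is a cut point of the corresponding restriction and $S=A$ works. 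Finally, if $H$ is non-bipartite, I choose $T\subseteq V(H)$ to be a minimal vertex set with $H-T$ bipartite, so that $T\neq\emptyset$. For each $v\in T$, the graph $H-(T\setminus\{v\})$ is non-bipartite by the minimality of $T$, while $H-T$ is bipartite; $v$ must have at least one neighbor in $H-T$ (otherwise $H-(T\setminus\{v\})$ would be the disjoint union of bipartite $H-T$ with the isolated $v$, hence bipartite), so adding $v$ back merges exactly the components $C_1,\ldots,C_m$ of $H-T$ meeting the neighborhood of $v$ into a single component $C^*$ of $H-(T\setminus\{v\})$, while leaving the remaining components unchanged. Since all components of $H-T$ are bipartite and the overall non-bipartiteness must come from the unique merged component, $C^*$ is non-bipartite; consequently removing $v$ strictly increases the bipartite component count by $m\geq 1$, making $v$ a bipartition point and $S=T$ a valid choice.

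The main obstacle is the bookkeeping in the non-bipartite case: the key point is to extract from the minimality of $T$ both the non-isolation of $v$ in $H-(T\setminus\{v\})$ and the non-bipartiteness of the specific component $C^*$ containing $v$, and then to check that the bipartite component count strictly rises upon deleting $v$. The $2$-connected bipartite case also relies on the small observation that both sides of the bipartition contain at least two vertices whenever $|V(H)|\geq 3$.
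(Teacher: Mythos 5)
Your opening claim---``Under the hypothesis on $\chara(K)$, one has $\sqrt{-1}\notin K$''---is false, and the entire argument rests on it. The condition $\chara(K)=0$ or $\chara(K)\not\equiv 1,2\bmod 4$ only guarantees that $\sqrt{-1}$ is not in the \emph{prime field} of $K$. Fields such as $\QQ(\sqrt{-1})$ (characteristic $0$) or $\FF_9$ (characteristic $3$) satisfy the hypothesis but contain $\sqrt{-1}$. Since your whole strategy is to translate primality into the combinatorial criterion $\MM(G)=\{\emptyset\}$ via \ref{primarynew} and \ref{minimalprimes2}, and those results are proved only for $\sqrt{-1}\notin K$, your proof simply does not apply to such fields.

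The paper closes this gap with two separate reductions, one for each direction, and neither is cosmetic. For the ``only if'' direction one passes to the prime field $K_0$: since $L_G$ is generated by polynomials with coefficients in $K_0$ and $K_0[\underline x,\underline y]/L_G$ embeds in $K[\underline x,\underline y]/L_G$, primality of $L_G$ over $K$ forces primality over $K_0$, and over $K_0$ the hypothesis does give $\sqrt{-1}\notin K_0$, so your combinatorial analysis of $\MM(G)$ becomes legitimate. For the ``if'' direction the reduction goes the other way, to an algebraic closure $\bar K$ (again by faithful flatness $K[\underline x,\underline y]/L_G\hookrightarrow\bar K[\underline x,\underline y]/L_G\bar K[\underline x,\underline y]$), where $\sqrt{-1}\in\bar K$ and the $\MM(G)$ machinery is unavailable; the paper instead observes that $T/L_G$ is a tensor product over $\bar K$ of the domains $K[x_i,x_j,y_i,y_j]/(x_ix_j+y_iy_j)$ on disjoint variable sets and appeals to the fact that over an algebraically closed field a tensor product of domains is a domain. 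Your purely combinatorial ``if'' argument (showing $\MM(G)=\{\emptyset\}$ for a disjoint union of edges and isolated vertices) is correct for $\sqrt{-1}\notin K$, but it does not cover $K=\QQ(\sqrt{-1})$, which is squarely within the corollary's hypothesis.

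Once the prime-field reduction is inserted, your ``only if'' argument is a correct and genuinely different case analysis from the paper's. The paper distinguishes ``$H$ not complete'' (take a minimal vertex cut, get cut points) from ``$H=K_m$, $m\ge 3$'' (take $S'=[m]\setminus\{1,2\}$, get bipartition points of a triangle), whereas you split into cut vertex / $2$-connected bipartite / non-bipartite, with the last case handled via a minimal odd transversal $T$ and a careful count of bipartite components. Both work; yours is more exhaustive, the paper's is shorter. But you still need to supply the two reduction steps and replace your ``if'' argument (or supplement it) with something valid when $\sqrt{-1}\in K$.
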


\begin{proof}
  Let $G$ be a disjoint union of edges and isolated vertices.
  It suffices to prove that $L_G$ is prime in the case of algebraically closed fields $K$.
  The ideal $L_G$ is the sum of ideals of the
  form $(x_ix_j+y_iy_j)$ for $i \neq j$ which are defined on
  pairwise disjoint sets of variables, and hence $T/L_G$ is a tensor product of copies of
  $K[x_i,x_j,y_i,y_j]/(x_ix_j+y_iy_j)$ for $i \neq j$ and a polynomial ring. Since $x_ix_j+y_iy_j$
  for $i \neq j$ is irreducible
  over any field, each factor is a domain. Now it follows from \cite[Prop. 5.17]{Mil} that $T/L_G$ is a domain and
  hence that $L_G$ is prime.

  To show that if $L_G$ is prime, then $G$ is a disjoint union of edges and isolated vertices, we may assume
  that $K$ is a prime field such that $\chara(K) = 0$ or
  $\chara(K) \not\equiv 1,2~\mod~4$, equivalently $K$ is a prime field such that
  $\sqrt{-1} \not\in K$.
  If $L_G$ is a prime ideal, then $L_G=Q_{\emptyset}(G)$, since
  by \ref{minimalprimes2}, $Q_{\emptyset}(G)$
  is a minimal prime ideal of $L_G$. This implies that $\MM(G)=\{\emptyset\}$,
  by \ref{minimalprimes2}. Let $H$ be a
  connected component of $G$. We show that $H=K_2$ or a single vertex. First suppose that $H$ is
  not a complete graph. Then there exists a
  minimal non-empty subset $S$ of $V(H)$ with the property that
  $H_{V(H)\setminus S}$ is a disconnected graph.
  It follows that each element $i$ of $S$ is a cut point of
  $H_{([n]\setminus S)\cup \{i\}}$, and hence a cut point of
  $G_{([n]\setminus S)\cup \{i\}}$. Therefore, by \ref{minimalprimes2},
  $S\in \MM(G)$, which contradicts the fact that
  $\MM(G)=\{\emptyset\}$, and hence $H$ is complete. Let $H=K_m$ where
  $V(H)=[m]$ and $m\geq 3$.
  Then $S'=[m]\setminus \{1,2\}\in \MM(G)$, since each element $i$ of $S'$ is
  a bipartition point of the graph
  $G_{([n]\setminus S')\cup \{i\}}=K_3$. Therefore, we get a contradiction,
  and hence $H=K_2$ or a single vertex, as desired.
\end{proof}

Now it is a simple exercise in graph theory to provide the proof of
\ref{lovaszcor}.

\begin{proof}[Proof of \ref{lovaszcor}]
  If $G$ is $(n-2)$-connected if and only if for each vertex $i$ there is at most
  one vertex $j$ such that $\{i,j\} \not\in E(G)$. Thus $G$ is $(n-2)$-connected if
  and only if $\overline{G}$ consists of a set of disjoint edges and isolated vertices.
  Now the assertion follows from \ref{prime}.
\end{proof}

Using the correspondence between the set of minimal prime ideals of $L_G$ and
the set $\MM(G)$ given in \ref{minimalprimes2}, we get immediately the
following criterion for unmixedness of the ideal $L_G$ when
$\sqrt{-1}\notin K$.

\begin{Corollary}
  \label{unmixed}
  Let $G$ be a graph with $b$ bipartite connected components, and let
  $\sqrt{-1}\notin K$. Then $L_G$ is unmixed if and only if $b(S)=|S|+b$
  for every $\emptyset\neq S\in \MM(G)$.
\end{Corollary}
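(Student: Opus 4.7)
The plan is to combine the three main ingredients already available: the classification of minimal primes in \ref{minimalprimes2}, the height formula in \ref{height}, and the fact that $\emptyset\in\MM(G)$. Recall that by definition an ideal is unmixed if and only if all its minimal primes have the same height. So I will simply translate ``all minimal primes have equal height'' into the combinatorial language of the sets $S\in\MM(G)$.

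First I would invoke \ref{minimalprimes2} to assert that, under $\sqrt{-1}\notin K$, the set of minimal prime ideals of $L_G$ is exactly $\{Q_S(G):S\in\MM(G)\}$. By \ref{height}, each such minimal prime has height
\[
\height Q_S(G)=|S|+n-b(S).
\]
In particular, taking $S=\emptyset$ (which belongs to $\MM(G)$ by definition) gives $\height Q_\emptyset(G)=n-b(\emptyset)=n-b$, since $b(\emptyset)$ is precisely the number of bipartite connected components of $G$.

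Now the equivalence is immediate. The ideal $L_G$ is unmixed precisely when $\height Q_S(G)=\height Q_\emptyset(G)$ for every $S\in\MM(G)$; by the formula above this rewrites as
\[
|S|+n-b(S)=n-b,
\]
i.e.\ $b(S)=|S|+b$. Since the equality is automatic for $S=\emptyset$, it suffices to impose it for $\emptyset\neq S\in\MM(G)$, which is exactly the stated condition.

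There is no real obstacle here: once \ref{minimalprimes2} and \ref{height} are in hand, the argument is a direct bookkeeping of heights, and the only care needed is to remember that $\emptyset\in\MM(G)$ provides the reference height $n-b$ against which the remaining minimal primes must be compared.
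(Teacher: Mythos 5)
Your proof is correct and takes essentially the same route as the paper: both invoke \ref{minimalprimes2} to identify the minimal primes as $\{Q_S(G):S\in\MM(G)\}$ and \ref{height} to compute their heights, then observe that equality of heights with the reference $\height Q_\emptyset(G)=n-b$ is exactly $b(S)=|S|+b$ for $\emptyset\neq S\in\MM(G)$.
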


\begin{proof}
  The ideal $L_G$ is unmixed if and only if all the minimal prime ideals of
  $L_G$ have the same height.
  By \ref{minimalprimes2}, this is equivalent to say that for all
  $\emptyset\neq S\in \MM(G)$,
  \begin{eqnarray}
    \label{sameheight}
    \height (Q_S(G))=\height (Q_{\emptyset}(G)).
  \end{eqnarray}
  By \ref{height}, this is the case if  and only if for every
  $\emptyset\neq S\in \MM(G)$, we have $b(S)=|S|+b$.
\end{proof}

As an application of the above criterion, we determine when $L_G$ is unmixed
for some special classes of graphs. We denoted by $C_n$ the $n$-cycle.

\begin{Corollary}
  \label{cycle}
  Let $n\geq 3$ be an integer, and $\sqrt{-1}\notin K$. Then $L_{C_n}$ is
  unmixed if and only if $n$ is odd.
\end{Corollary}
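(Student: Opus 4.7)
The plan is to apply the unmixedness criterion of \ref{unmixed}: $L_{C_n}$ is unmixed iff $b(S) = |S| + b$ for every nonempty $S \in \MM(C_n)$, where $b$ is the number of bipartite connected components of $C_n$. Since $C_n$ is connected, $b = 1$ when $n$ is even and $b = 0$ when $n$ is odd, so everything reduces to describing $\MM(C_n)$ explicitly and computing $b(S)$.

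First I would classify the nonempty elements of $\MM(C_n)$. For $S = \{s_1,\dots,s_k\} \subseteq [n]$ in cyclic order, $C_n \setminus S$ decomposes into the $k$ arcs between cyclically consecutive elements of $S$; its connected components are the nonempty arcs, each of which is a path (and hence bipartite). For $i \in S$, adding $i$ back affects only the two arcs adjacent to $i$. If both adjacent arcs are nonempty, then adding $i$ merges two disjoint bipartite paths into a single bipartite path, so $i$ is simultaneously a cut point and a bipartition point. If at least one adjacent arc is empty, then $i$ enters either as a leaf attached to a path or as an isolated vertex; in either case the numbers of components and of bipartite components do not decrease, so $i$ is neither. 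For $|S| \geq 2$, the condition ``both adjacent arcs nonempty for every $i \in S$'' is exactly the condition that $S$ is an independent set of $C_n$. The case $|S| = 1$ must be handled separately: then $G_{([n]\setminus S) \cup \{i\}} = C_n$, so $i$ is never a cut point, and $i$ is a bipartition point iff $C_n$ is non-bipartite, i.e., iff $n$ is odd. In summary, the nonempty members of $\MM(C_n)$ are the independent sets of $C_n$ of size at least $2$, together with all singletons when $n$ is odd.

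With this description, computing $b(S)$ is immediate: for every nonempty $S \in \MM(C_n)$ all $|S|$ arcs adjacent to elements of $S$ are nonempty, each such arc is a bipartite path, and these arcs exhaust the components of $C_n \setminus S$, so $b(S) = |S|$. Substituting into the criterion, when $n$ is odd we have $b = 0$ and $b(S) = |S| = |S| + b$ for every nonempty $S \in \MM(C_n)$, so $L_{C_n}$ is unmixed; when $n$ is even we have $b = 1$, and the independent pair $S = \{1,3\} \in \MM(C_n)$ (which exists because $n \geq 4$) gives $b(S) = 2 \neq 3 = |S| + b$, so $L_{C_n}$ is not unmixed.

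The main obstacle is the case analysis classifying $\MM(C_n)$, in particular keeping track of when an added vertex creates a leaf or an isolated vertex rather than merging two disjoint arcs. A subtlety is that singletons behave asymmetrically in the bipartite versus non-bipartite setting, namely $\{i\} \in \MM(C_n)$ iff $n$ is odd; for even $n$ the failure of unmixedness is thus not driven by singletons but by independent pairs in $C_n$.
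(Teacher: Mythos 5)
Your proof is correct and follows essentially the same route as the paper: apply the unmixedness criterion of \ref{unmixed}, describe $\MM(C_n)$ as (roughly) the nonempty independent sets of $C_n$, compute $b(S)=|S|$, and exhibit $S=\{1,3\}$ as a witness of failure in the even case. Your treatment is slightly more thorough than the paper's in that you isolate the singleton case and observe that $\{i\}\in\MM(C_n)$ iff $n$ is odd, a detail the paper leaves implicit.
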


\begin{proof}
  First suppose that $n=2k$ for some $k\geq 2$, so that in this case $C_n$
  is a bipartite graph on $[n]$.
  Clearly $S=\{1,3\}\in \MM(C_{n})$. So $b(S)=2$, but $|S|+1=3$. Therefore,
  by \ref{unmixed}, $L_{C_{n}}$ is not  unmixed.
  Now suppose that $n=2k+1$ for some $k\geq 1$. Let
  $\emptyset \neq S\subset [n]$. Then $S\in \MM(C_n)$
  if and only if $S$ does not contain any two adjacent vertices of $C_n$.
%Indeed, if $S\in \MM(C_n)$ and $S$ contains two adjacent vertices $i$ and $i+1$ (mod $n$), then $i$ is neither a cut point nor a bipartition point of $G={(C_n)}_{([n]\setminus S)\cup \{i\}}$, since $i$ has at most one neighbor in $G$ and also $G$ does not contain any cycle, and hence it is a bipartite graph. Conversely, if $S$ does not contain any two adjacent vertices of $C_n$, then we have the following two cases. First, if $S=\{i\}$  for some $i=1,\ldots,n$, then $i$ is a bipartition point of $C_n$. Note that for $n=3$, this is the only  possible case for such a set $S$. Next, we assume that $n>3$. If $|S|>1$, then each $i\in S$ is a cut point of ${(C_n)}_{([n]\setminus S)\cup \{i\}}$, since the neighbors of $i$ in $C_n$ are nonadjacent vertices of ${(C_n)}_{([n]\setminus S)\cup \{i\}}$.
  So, removing each of the elements of $S$ increases the number of bipartite
  connected components of the
  corresponding graph by one. Therefore, $b(S)=c(S)=|S|$, and hence $L_{C_n}$
  is unmixed by \ref{unmixed},
  since $C_n$ is non-bipartite in this case which means $b=0$.
\end{proof}

\begin{Corollary}
  \label{complete}
  Let $n\geq 2$ be an integer, and $\sqrt{-1}\notin K$. Then $L_{K_n}$ is unmixed if and only if $n=2$ or $3$.
\end{Corollary}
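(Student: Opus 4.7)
The plan is to apply the unmixedness criterion from \ref{unmixed}, which reduces the question to checking, for every non-empty $S \in \MM(K_n)$, whether $b(S) = |S| + b$, where $b$ is the number of bipartite connected components of $K_n$.

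First I would dispose of the small cases $n=2$ and $n=3$. When $n=2$, the ideal $L_{K_2} = (x_1x_2+y_1y_2)$ is prime (hence trivially unmixed), and one checks directly that $\MM(K_2) = \{\emptyset\}$ since removing a vertex from $K_2$ changes neither the number of connected components nor the number of bipartite components. When $n=3$, the graph $K_3$ is non-bipartite so $b=0$; a short case check shows $\MM(K_3) = \{\emptyset, \{1\}, \{2\}, \{3\}\}$, because a singleton vertex of $K_3$ is a bipartition point (removing it leaves $K_2$, which is bipartite), while no two-element subset is in $\MM(K_3)$. For each singleton $S=\{i\}$ one has $b(S)=1=|S|+b$, so \ref{unmixed} gives unmixedness.

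For $n \geq 4$ I would produce a single $S \in \MM(K_n)$ violating the criterion. Take $S = [n] \setminus \{i,j\}$ for any distinct $i,j$; then $|S|=n-2$. For any $k \in S$, the induced graph on $([n] \setminus S) \cup \{k\} = \{i,j,k\}$ is $K_3$, and $k$ is a bipartition point of $K_3$ since deleting $k$ yields the bipartite graph $K_2$. Hence $S \in \MM(K_n)$. On the other hand $K_n$ restricted to $[n] \setminus S$ is $K_2$, a single bipartite component, so $b(S) = 1$, while $|S|+b = (n-2) + 0 = n-2 \geq 2$. The equality $b(S)=|S|+b$ therefore fails, and \ref{unmixed} implies that $L_{K_n}$ is not unmixed.

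No step is really an obstacle; the only mildly delicate point is the verification that for $n \geq 4$ the set $S = [n]\setminus\{i,j\}$ actually lies in $\MM(K_n)$, which I would justify explicitly as above using the definition of bipartition point.
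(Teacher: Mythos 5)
Your proof is correct and follows essentially the same route as the paper: apply the unmixedness criterion from \ref{unmixed} and, for $n\geq 4$, exhibit the violating set $S$ of size $n-2$ whose complement induces $K_2$. The only difference is cosmetic: for $n=2,3$ you verify $\MM(K_n)$ directly, whereas the paper simply cites \ref{prime} (for $n=2$) and \ref{cycle} (for $n=3$, since $K_3=C_3$).
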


\begin{proof}
  If $n=2$ or $3$, then by \ref{prime} and \ref{cycle}, $L_{K_n}$ is unmixed.
  If $n\geq 4$, then $S=\{1,\ldots,n-2\}\in \MM(K_n)$, since for all
  $i=1,\ldots,n-2$, the vertex $i$ is a bipartition point
  of the graph ${(K_n)}_{{([n]\setminus S)\cup \{i\}}}$, which is a
  $3$-cycle. Thus $|S|=n-2$ and $b(S)=1$,
  and hence $b(S)\neq |S|$, since $n\geq 4$. Therefore, by \ref{unmixed},
  $L_{K_n}$ is not unmixed, since $K_n$ is non-bipartite.
\end{proof}

\section*{Acknowledgement}
We thank Bernd Sturmfels for interesting discussions and hints.

\end{document}